\newcommand{\A}{\mathbb{A}}
\newcommand{\PP}{\mathbb{P}}
\newcommand{\N}{\mathbb{N}}
\renewcommand{\k}{\Bbbk}
\newcommand{\Fq}{\mathbb{F}_q}
\newcommand{\F}{\mathbb{F}}
\newcommand{\RM}{\mathrm{RM}}
\newcommand{\PRM}{\mathrm{PRM}}
\newcommand{\Supp}{\mathrm{Supp}}
\newcommand{\wt}{\mathrm{wt}}
\newcommand{\rk}{\mathrm{rk}}
\newtheorem{theorem}{{\bf Theorem}}
\newtheorem{lemma}[theorem]{{\bf Lemma}}
\newtheorem{remark}[theorem]{{\bf Remark}}
\newtheorem{proposition}[theorem]{{\bf Proposition}}
\newtheorem{corollary}[theorem]{{\bf Corollary}}
\newtheorem{definition}[theorem]{{\bf Definition}}
\newtheorem{ex}[theorem]{{\bf Example}}
\newtheorem{example}[theorem]{{\bf Example}}
\newcommand{\addresseshere}{%
  \enddoc@text\let\enddoc@text\relax
}
\begin{document}
\title[Betti Numbers and Higher Weight Spectra of $\RM_q(2,2)$]{Betti Numbers and Higher weight spectra of Reed-Muller codes $\RM_q(2,2)$}
\author[S. R. Ghorpade]{Sudhir R. Ghorpade}
	\address{Department of Mathematics, 
		Indian Institute of Technology Bombay,\newline \indent
		Powai, Mumbai 400076, India}
\email{\href{mailto:srg@math.iitb.ac.in}{srg@math.iitb.ac.in}}
\thanks{Sudhir Ghorpade is partially supported by  the grant DST/INT/RUS/RSF/P-41/2021 from the Department of Science \& Technology, Govt. of India, and the Indo-Norwegian Cooperation Programme 2024 (INCP-2) from the University Grants Commission, Govt. of India.}

\author[T. Johnsen]{Trygve Johnsen}
\address{Department of Mathematics and Statistics, UiT: The Arctic University of Norway, \newline \indent
N-9037 Troms\o, Norway}
\email{\href{mailto:Trygve.Johnsen@uit.no}{Trygve.Johnsen@uit.no}}
\thanks{Trygve Johnsen was partially supported by the project Pure Mathematics in Norway, funded by Bergen Research Foundation and Troms\o\  Research Foundation and the UiT-project Mascot. He is also partially supported by the Indo-Norwegian Cooperation Programme 2024 (INCP-2) from the Norwegian Directorate for Higher Education and Skills.} 
    
\author[R. Ludhani]{Rati Ludhani}
\address{Inria Saclay Centre, 1 Rue Honore d'Estienne d'Orves, 91120 Palaiseau, France}
\email{\href{mailto:ratiludhani@gmail.com}{ratiludhani@gmail.com}}
\thanks{Rati Ludhani was supported by the Prime Minister's Research Fellowship PMRF-192002-256, and is currently supported by the European Union's Horizon 2020 research and innovation program under the Marie Sk{\l}odowska-Curie grant agreement n$^\circ$101034255. }
\author[R. Pratihar]{Rakhi Pratihar} 
\address{Institut de Recherche Math\'ematique de Rennes (IRMAR), UMR CNRS 6625, Universit\'e de Rennes - Campus Beaulieu, 
35700 Rennes, France}
\email{\href{mailto:pratihar.rakhi@gmail.com}{pratihar.rakhi@gmail.com}}
\thanks{Rakhi Pratihar was partially funded by French Agence Nationale de la Recherche in the context of Plan France 2030 with reference ANR-22-PETQ-0008. 
}


\begin{abstract}
We determine all the Betti numbers of the $q$-ary second order Reed-Muller codes of length $q^2$, and also of the elongations of matroids associated to these codes. We then use it  to determine the higher weight spectra of these codes. As a  special case, we recover some results of Kaplan and Matei about counting certain curves over finite fields with prescribed rational intersection points.  In geometric terms, our results relate to the affine Veronesean by which we mean the image of the affine plane $\A^2$ under the quadratic Veronese embedding of $\PP^2$ in $\PP^5$. Indeed, finding the higher weight spectra,  of the Reed-Muller code considered here,  corresponds to determining the number of $\Fq$-rational points on all possible sections of this affine Veronesean by linear subvarieties of $\PP^5$. 
\end{abstract}


\maketitle

%
%
\section{Introduction}
Let $n$ be a positive integer and let $C$ be a linear error-correcting code of length $n$, i.e., a subspace of the $n$-dimensional vector space $\Fq^n$ over the finite field $\Fq$ with $q$ elements. 
 The study of invariants of $C$ is important  since a fundamental problem in coding theory is to distinguish two inequivalent codes. 
Betti numbers of  $C$ are a fine set of invariants of $C$, which were introduced a little over a decade ago in \cite{JV} and have been of some current interest (see, e.g, \cite{GS, GR, JPR}). These are defined by considering the vector matroid $M_C$ corresponding to a  parity check matrix of $C$ and then an $\N$-graded minimal free resolution of the Stanley-Reisner ring of the simplicial complex formed by independent sets of $M_C$. (See Section~\ref{matroids} for details.) Determining all the Betti numbers is usually a difficult problem and a solution is known only for a few classes of linear codes. But this can be interesting and useful. To begin with, it was shown in \cite{JV} that the Betti numbers of $C$ readily yield the generalized Hamming weights of $C$. Furthermore, by combining the results in \cite{JRV} and \cite{K1}
 (or \cite{JP}), we see that the Betti numbers of $M_C$ and of its elongations determine  the higher weight spectra of $C$.  It may be noted, however, that 
 two linear codes can have the same higher weight spectra, but different Betti numbers. (See Example \ref{bettiremark}, and also Remark \ref{newphi}.) 
 Thus, the Betti numbers of a code are finer set of invariants of the code (and also of the associated matroid) than the classical invariants such as  weight distribution, generalized Hamming weights, and higher weight spectra. In other words,  the Betti numbers bear more structural information about a linear code. It is conceivable that this can be exploited further for cryptographic applications; see, e.g., the 
 recent work of Randriambololona \cite{R}.

The notion of \emph{higher weight spectra}, also known as \emph{support weight distribution}, is more classical and it can be traced 
back at least 
to the work of 
Helleseth,  Kl{\o}ve and Mykkeltveit \cite{HKM} in 1977 (see also \cite{K1, K2}).  For $C$ as above, finding the higher weight spectra of $C$ amounts to determining for each $w = 0, 1, \dots, n$ and $r=0,1, \dots, \dim C$,  the number $A_w^{(r)}(C)$ of $r$-dimensional subcodes of $C$ of support weight $w$. The \emph{spectrum}, or the \emph{weight distribution}, of $C$ is a special case and corresponds essentially to $r=1$. The determination of the \emph{generalized Hamming weights}, also known as the \emph{higher weights}, $d_r(C)$ of $C$ is  a special case as well, since $d_r(C)$ is the least nonzero value of $w$ for which $A_w^{(r)}(C)\ne 0$. The problems of determining the spectrum or the generalized Hamming weights have been considered in the literature for many classes of linear codes with varying degrees of success. However, the complete determination of higher weight spectra appears to be known in far fewer cases. 

The class of codes we consider here is that of $q$-ary (or generalized) \emph{Reed-Muller codes} $\RM_q(d,m)$ of order $d$ and length $q^m$. It is among the most widely studied classes of linear codes and it has been of significant theoretical and practical interest. These codes were introduced, in the binary case 
(i.e., when $q=2$), by 
D. E. Muller and I. S. Reed in 1954.
The study of $q$-ary Reed-Muller codes was   pioneered in the late 1960's by Kasami-Lin-Peterson \cite{KLP}  and Delsarte-Goethals-MacWilliams \cite{DGM}. In the binary case, the weight distribution of any  second order code $\RM_2(2,m)$ was given in 1970 by Sloane and Berlekamp \cite{SB}. The weight distribution of  $\RM_2(3,m)$ is not known, in general,  and it has been of much current interest. For many partial results in this direction, we refer to the works of 
Carlet and Sol\'e \cite{CS}, Carlet \cite{C}, Markov and Borrisov \cite{MB}, Lou and Wang \cite{LW}, and Rameshwar, Jain and Kashyap \cite{JRK}, as well as references therein. 
In the $q$-ary case, a weight distribution of $\RM_q(2,m)$ was proposed by McEliece \cite{Mc} in 1969, but it appears that there were some mistakes in the computations and a precise version was given in 2019 by Li \cite{Li}. 
The generalized Hamming weights of any $\RM_q(d,m)$ were determined 
by Heijnen and Pellikaan \cite{HP} in 1998 (see also the related works of Beelen and Datta \cite{BD} and Beelen \cite{B}).  
For the general problem  of the determination of higher weight spectra of $\RM_q(d,m)$, the best known result thus far appears to be that of Jurrius \cite{J} where the case of first order, i.e., $d=1$, has been resolved. 
For a related class of linear codes, known as \emph{Veronese codes} or \emph{projective Reed-Muller codes} $\PRM_q(d,m)$, higher weight spectra has been determined in the following cases: $\PRM_q(2,2)$ and $\PRM_2(2,3)$ by Johnsen and Verdure \cite{JV20, JV21}, and 
$\PRM_3(2,3)$ by Kaipa and Pradhan \cite{KP}. 

Our main results in this paper are the complete determination of all the Betti numbers and  the higher weight spectra of the $q$-ary second order Reed-Muller code $C_q:= \RM_q(2,2)$ of length $q^2$. 
The proof is motivated by the methods used in \cite{JV20} and the strategy is explained in Section~\ref{sec3}. In particular, we not only determine the Betti numbers of $C_q$, but also those of all the elongations of the matroid  associated to ${C_q}$. It may be noted that \cite{JV20} uses a classification of conics in the projective plane over $\Fq$. Such a classification is known in the literature on finite geometry (see, e.g., \cite{Hi}).  We attempt a similar approach, which requires a classification of conics in the affine plane over $\Fq$. But this doesn't appear to be readily available and seems rather involved. For instance, while conics in $\PP^2(\Fq)$ can be one of 4 classes, the conics in $\A^2(\Fq)$ get subdivided into 10 classes. A precise result is given in  Proposition~\ref{conicvariations}, and this may be of independent interest. 

Questions about the higher weight spectra, and in particular, about generalized Hamming weights and the weight distribution, have a well-known geometric interpretation using, for instance, the language of projective systems of Tsfasman and Vl\u{a}du\c{t} \cite{TV}. From a geometric viewpoint, $C_q= \RM_q(2,2)$ corresponds to the image of the affine plane $\A^2$ (over $\Fq$) under the quadratic Veronese embedding of $\PP^2$ in $\PP^5$. One may call this image as an \emph{affine Veronesean} and denote by $\mathcal{V}^{\A}_{2,2}$. Thus finding $A_w^{(r)}(C_q)$ corresponds to finding the number of linear subspaces $L_r$ of $\PP^5$ of codimension $r$ for which $\left| \mathcal{V}^{\A}_{2,2} \cap L_r\right| =w$. Alternatively, this corresponds to determining the number of $\Fq$-rational points on intersections of $r$ ``distinct" affine conics. Such questions were investigated by Kaplan and Matei \cite{KM} for intersections of two (or less!) conics, and they can be readily recovered from our determination of higher weight spectra of $C_q$. See Remark~\ref{four} for details. 

For the convenience of the reader, a complete statement of the result concerning the higher weight spectra of the code $C_q= \RM_q(2,2)$ is given in Section~\ref{sec2} below, while the proof is given in Section \ref{sec8} for $q\ge 7$ and in Section~\ref{smallq} for $q\le 5$. The result about the Betti numbers of $C_q$ and of the elongations of the matroid associated to $C_q$ is stated and proved in Sections \ref{minres} and \ref{minreselong} for $q\ge 7$, while these Betti numbers are tabulated in the Appendix for the remaining values of $q$, i.e., for $q=2,3,4, 5$. 

\section{The Code $C_q$ and its Higher Weight Spectra}
\label{sec2}

Throughout this paper, we let $q$ be a prime power and $\Fq$ the finite field with $q$ elements. For any nonnegative integer $n$, we denote by $\PP^n_q$ the $n$-dimensional projective space over $\Fq$. 
Following \cite{JV20}, we let $\nu_q$ be the Veronese  map that maps $\mathbb{P}^2_q$ into $\mathbb{P}^5_q$, 
i.e., $\nu_q$ maps $(x:y:z)$ 
to  $(x^2:xy:xz:y^2:yz:z^2)$, and let $V_q$ be the image of $\nu_q$. Then $V_q$  is the set of $\Fq$-rational points of a non-degenerate smooth surface of degree $4$. The cardinality $|V_q|$ of $V_q$ is  $|\mathbb{P}^2_q|= q^2+q+1$. Fix some order for the points of 
$V_q$, and for each such point, fix a coordinate $6$-tuple that represents it.
For 
the points corresponding to $(x:y:z)$ with $z\ne 0$, we always pick the representative with $z=1.$
Let $G_q$ be the $(6 \times q^2)$ matrix, whose columns are the coordinate $6$-tuples  of the points of $V_q$, taken
in the fixed order,  deleting the $q+1$ columns with $z=0$. The deletion of these columns is the essential difference between the codes studied here and the ones in \cite{JV20}, and, as we will see, it makes the 
problem more complicated to study.
\begin{definition}
{\rm For $q\ge 3$, the $q$-ary \emph{Reed-Muller code} $C_q = \RM_q(2,2)$ is the linear code
with generator matrix $G_q$. For $q=2$, the \emph{Reed-Muller code} $C_2 = \RM_2(2,2)$ is the linear 
code with generator matrix $G_2'$, where $G_2'$ is the $(4\times q^2)$ matrix obtained from $G_2$ by removing 
the third and fifth rows, 
which correspond to $xz$ and $yz$, respectively.}
\end{definition}

\begin{remark}
{\rm The first $2$ in $\RM_q(2,2)$ refers to the maximal degree $2$ of the polynomials in $x,y$ that are evaluated to obtain entries in $G_q$, and the second $2$ refers to the number of variables $x,y$ that appear, which is $2$ after we have set $z=1$ everywhere. It is elementary and well-known that $C_q$ is a 
$[q^2, \, 6,\, q^2-2q]$-code if $q\ge 3$ and $C_2$ is a binary $[4,  4,  1]$-code; in fact, $C_2 = \mathbb{F}_2^4$.
See, for example, \cite{DGM}.
}
\end{remark}


\begin{definition}
{\rm Let $n$ be a positive integer and let $k$ be a nonnegative integer with $k\le n$. For a linear $[n,k]_q$-code $C$  in $\F_q^n$ and for a codeword  ${\bf w}=(w_1,\cdots,w_n) \in C$, let $\Supp({\bf w})$ be the set of indices $i\in \{1,\ldots,n\}$ where $w_i \ne 0.$ For a set $T$ of codewords, let $\Supp(T)$ be the union of the 
$\Supp({{\bf w}})$, for ${\bf w} \in T$. Let $\wt({\bf w})$ be the cardinality of $\Supp({{\bf w}})$, and $\wt(T)$ the cardinality of $\Supp(T).$}
\begin{itemize}
{\rm \item For each $r = 0,1,\ldots,k$, let $d_r(C)$ be the smallest nonnegative integer $w$ such that there exists a $r$-dimensional 
subcode $D$ of $C$ with $\wt(D) = w.$ We call $d_r(C)$ the $r$-th \emph{generalized Hamming weight} or the $r$-th \emph{higher weight} of $C$. Note that $d_0(C)=0$. 
\item For each $r=0,1,\ldots,k$ and for each $w = d_r(C),\ldots,n$, let $A_w^{(r)}(C)$ be the number of  linear subcodes $D$ of dimension $r$ of $C$, with $\wt(D) = w.$
(If $w$ is outside the range $[d_r(C),n]$ we set $A_w^{(r)}(C)=0.)$
For a fixed $r \in \{0, 1, \ldots, k\}$, the ordered multiset $\{A_w^{(r)}(C)\},$ for $w=d_r(C),\ldots,n$, is called the $r$-th \emph{weight spectrum} of $C$. We call the multiset of $r$-th 
weight spectra for $r=0, 1,\dots,k$ as the \emph{higher weight spectra} of $C$. Note that $A_0^{(0)}(C)=1$. 

\item For $0\le r\le k$, we denote by $W_r(X,Y)$ the polynomial 
 $$
 \sum_{w=0}^{n} A^{(r)}_w(C) X^{n-w}Y^w. 
 $$
 This is 
 called the $r$-th \emph{higher weight polynomial} of the code $C$. }
\end{itemize}
\end{definition}

%
%
%
One of our main results is the following complete description of the higher weight spectra for the code $C_q$ for all prime powers $q$. 
\begin{theorem} \label{Fallq}
Consider the Reed-Muller code $C_q = \RM_q(2,2)$. Let $A_w^{(r)} = A_w^{(r)}(C_q)$ for $w\ge 0$ and $0\le r \le \dim C$. Then: 
\begin{itemize}
\item[{$(a)$}] 
If $q\ge 7$, then for the $[q^2, \, 6,\, q^2-2q]$-code $C_q$, we have 
\begin{eqnarray*}
&& A_0^{(0)}=1, \quad A_{q^2-2q}^{(1)}=\frac{q^3-q}{2},\quad A_{q^2-2q+1}^{(1)}=\frac{q^4+q^3}{2},\\
&&\vspace{0.01mm}\\
&& A_{q^2-q-1}^{(1)}=\frac{q^5-2q^4+q^3}{2},\quad A_{q^2-q-1}^{(2)}=q^4-q^2, \\
&&\vspace{0.02mm}\\
&& A_{q^2-q}^{(1)}=q^4+q^2+2q,\quad A_{q^2-q}^{(2)}=2q^3 + 3q^2 + q,\quad A_{q^2-q}^{(3)}=q^2 + q,\\
&&\vspace{0.01mm}\\
&& A_{q^2-q+1}^{(1)}=\frac{q^5-q^3}{2},\\
&&\vspace{0.01mm}\\
&& A_{q^2-4}^{(2)}= \frac{q^8-4q^7+5q^6+q^5-6q^4+3q^3}{24},\\
&&\vspace{0.01mm}\\
&& A_{q^2-3}^{(2)}=\frac{4q^7-9q^6+q^5+9q^4-5q^3}{6},\quad A_{q^2-3}^{(3)}=\frac{q^6-q^5-q^4+q^3}{6}, 
\end{eqnarray*}
\begin{eqnarray*}
&& A_{q^2-2}^{(2)}=\frac{q^8-2q^7+13q^6-9q^5-14q^4+11q^3}{4},\quad A_{q^2-2}^{(3)}= \frac{q^7+q^5-2q^3}{2},\\
&&\vspace{0.01mm}\\
&& A_{q^2-2}^{(4)}= \frac{q^4-q^2}{2},
\\  &&\vspace{0.01mm}\\
&& A_{q^2-1}^{(1)}=\frac{q^4-q^3}{2},\quad A_{q^2-1}^{(2)}=\frac{2q^8+4q^7-5q^6+29q^5+15q^4-27q^3+6q^2}{6},\\
&&\vspace{0.01mm}\\
&& A_{q^2-1}^{(3)}=\frac{2q^8+3q^6+3q^5+5q^4+3q^3}{2},\ A_{q^2-1}^{(4)}=q^6 + q^5 + q^3 + 2q^2,\ A_{q^2-1}^{(5)}=q^2,\\
&&\vspace{0.01mm}\\
&& A_{q^2}^{(1)}=\frac{q^3-q+2}{2},\ A_{q^2}^{(2)}=\frac{9q^8+8q^7+21q^6-19q^5+42q^4+59q^3-24q^2+24}{24},\\
&&\vspace{0.01mm}\\
&& A_{q^2}^{(3)}=\frac{6q^9+9q^7+8q^6+7q^5+4q^4+14q^3+6q^2+6}{6},\\
&&\vspace{0.01mm}\\
&& A_{q^2}^{(4)}=\frac{ 2q^8 + 2q^7 + 2q^6 + 2q^5 + 5q^4 + 2q^3 + q^2 + 2q + 2}{2},\\
&&\vspace{0.01mm}\\
&& A_{q^2}^{(5)}=q^5 + q^4 + q^3 + q + 1,\\
&&\vspace{0.01mm}\\
&& A_{q^2}^{(6)}= 1, 
\end{eqnarray*}
and all other $A_w^{(r)}$ are zero. 













\bigskip
\item[{$(b)$}] For the  $[25,\, 6, \, 15]$-code $C_5$, we have: 
For $w \ne q^2-q+1=q^2-4=21$, all nonzero $A^{(r)}_w$ are given by the same formulas in $q$, as in {(a)}. Furthermore, $A^{(1)}_{21}=1500, $\ $A^{(2)}_{21}=6500$, and $A^{(r)}_{21}=0$ for $r=3,4,5,6.$

\bigskip
\item[{$(c)$}] For the  $[16,\, 6, \, 8]$-code $C_4$, we have:  
For  $w \ne q^2-q=q^2-4=12$ and  $w \ne q^2-q+1=q^2-3=13$, all nonzero $A^{(r)}_w$ are given by the same formulas in $q$, as in {(a)}. Furthermore, $A^{(1)}_{12}=280,$\ $A^{(2)}_{12}=1020,$\ $A^{(3)}_{12}=20$, and $A^{(r)}_{12}=0,$ for $r=4,5,6$ and $A^{(1)}_{13}=480,$\ $A^{(2)}_{13}=5280$,\ $A^{(3)}_{13}=480$, and $A^{(r)}_{13}=0$ for $r=4,5,6.$

\bigskip
\item[{$(d)$}] For  the $[9, \, 6, \, 3]$-code $C_3$, we have:
For $w \ne q^2-q-1=q^2-4=5$,\ $w \ne q^2-q=q^2-3=6$, and  $w \ne q^2-q+1=q^2-2=7$, all nonzero $A^{(r)}_w$ are given by the same formulas in $q$, as in {(a)}. Furthermore, $A^{(1)}_{5}=54,$\ $A^{(2)}_{5}=126$, and $A^{(r)}_{5}=0,$ for $r=3, 4,5,6$, and $A^{(1)}_{6}=96,$\ $A^{(2)}_{6}=588,$ \ $A^{(3)}_{6}=84$, and $A^{(r)}_{6}=0,$ for $r=4,5,6,$ and 
$A^{(1)}_{7}=108,$\ $A^{(2)}_{7}=2160,$\ $A^{(3)}_{7}=1188$, and $A^{(r)}_{7}=0$ for $r=4,5,6$.

\bigskip
\item[{$(e)$}] For  the $[4,  4,  1]$-code $C_2$, we have: $A_0^{(0)}=1$, 
$A^{(1)}_1=4$, $A^{(1)}_2=6,\ A^{(1)}_3=4,\ A^{(1)}_4=1$,  $A^{(2)}_2=6,\ A^{(2)}_3=16,\ A^{(2)}_4=13,\ A^{(3)}_3=4,\ A^{(3)}_4=11,\ A^{(4)}_4=1,$
and all other $A^{(r)}_w$ are zero. 
\end{itemize}
\end{theorem}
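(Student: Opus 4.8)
The plan is to follow the strategy of \cite{JV20} outlined in the introduction and carried out in detail in Section~\ref{sec3}: we determine the higher weight spectra of $C_q$ by computing the generalized weight polynomials $P_j(Z)$ for $0\le j\le q^2$ and evaluating them at the powers of $q$, and we obtain the $P_j(Z)$ from the coarse invariants $\phi_j^{(\ell)}$ attached to the graded Betti numbers of the Stanley--Reisner rings of the matroid $M_q$ of the code $C_q$ and of its elongations $M_q^{(\ell)}$. The bridge between this algebra and the geometry of conics is the following dictionary. The matroid $M_q$ lives on the $q^2$ columns of $G_q$, and the $i$-th column is the Veronese image $\nu_q(P_i)$ of the $i$-th affine point $P_i\in\A^2_q$; hence for a set $S$ of affine points one has $\rk_{M_q}(S)=6-\dim_{\F_q}\mathcal{L}(S)$, where $\mathcal{L}(S)$ is the space of conics (homogeneous forms of degree $2$ in $x,y,z$, thought of as conics in $x,y$ after $z=1$) vanishing at every point of $S$, and dually $\dim C_q(J)=\dim_{\F_q}\mathcal{L}(S)$ for the subcode $C_q(J)=\{c\in C_q:\Supp(c)\subseteq J\}$, where $S$ is the complement of $J$ in the index set of $q^2$ columns. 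Thus every quantity entering the $\phi_j^{(\ell)}$ is ultimately a count of subsets $S$ of the $q^2$ affine points, sorted by the value of $\dim\mathcal{L}(S)$ (and, for the elongations, by the corresponding truncated ranks), so the whole problem is reduced to the incidence geometry of conics and lines in $\A^2_q$.

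The first step is the classification of conics in $\A^2_q$ into the $10$ classes mentioned in the introduction, obtained by intersecting each of the four projective types --- smooth conic, pair of distinct $\F_q$-rational lines, pair of $\F_{q^2}$-conjugate lines, double line --- with the line at infinity in every admissible position. For each class I would record the number of $\F_q$-rational affine points on a member and the number of members; for instance pairs of parallel lines carry $2q$ affine points and there are $(q+1)\binom q2=(q^3-q)/2$ of them, which already accounts for $d_1(C_q)=q^2-2q$ and $A^{(1)}_{q^2-2q}=(q^3-q)/2$ in part~(a).

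The central step is to convert this classification into the values of all the $\phi_j^{(\ell)}$. Here the basic facts are that any three distinct affine points have linearly independent Veronese images, and any four have images spanning a $4$-dimensional space unless the four are collinear (in which case the supporting line is forced into every conic of $\mathcal{L}(S)$); consequently, for $|S|\ge 4$ the only configurations with $\dim\mathcal{L}(S)$ exceeding the generic value $\max(6-|S|,0)$ are those containing a collinear subset, or lying on a common conic, or suitable combinations of these. Each such special configuration, and the number of sets $S$ of each size that realize it, is enumerated from the conic classification together with the counts of lines, of their parallel classes, and of conic--line incidences in $\A^2_q$; doing this for all $j$ and $\ell$ yields the $\phi_j^{(\ell)}$, hence the $P_j(Z)$. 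Substituting $Z=q^i$ and using the relations of Section~\ref{sec3} then produces each higher weight polynomial $W_r(X,Y)$ and, on collecting coefficients, the polynomial formulas for $A^{(r)}_w(C_q)$ in part~(a). Parts~(b)--(e) follow by the same route with the conic classification specialized to $q=5,4,3,2$: for small $q$ several generic configurations either degenerate or cannot be realized --- which is precisely what happens for the values $w=q^2-q\pm1$, $w=q^2-4$, and so on flagged in the statement --- so the affected $A^{(r)}_w$ are recomputed from the specialized counts, while for $q=2$, where $C_2=\F_2^4$, the spectrum is read off directly.

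The step I expect to be the main obstacle is the second one. The affine conic classification itself is a finite-geometry exercise, but translating it into a complete and correct census of all subsets of $\A^2_q$ of sizes up to $6$ --- and the analogous data for the elongated matroids --- is delicate, because one must simultaneously track overlapping degeneracies (points collinear in more than one way, points lying on several conics, parallel versus concurrent pencils of lines, and the behaviour of all of this at the line at infinity) while avoiding double counting. Determining exactly which of these configurations disappear for $q\le 5$, so that the exceptional values recorded in parts~(b)--(e) come out right, is the other place where particular care is required.
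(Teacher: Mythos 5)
Your overall strategy coincides with the paper's: classify affine conics (Proposition~\ref{conicvariations}), use the dictionary between nullity and spaces of conics through the complementary point set (Lemma~\ref{help}), compute the invariants $\phi_j^{(\ell)}$ for the matroid $M_q$ and its elongations, assemble the generalized weight polynomials $P_j(Z)$, and extract the $A_w^{(r)}$ by evaluating at powers of $q$, with the small cases $q=5,4,3$ recomputed after the degenerations and $q=2$ read off from $C_2=\F_2^4$. Up to that level of description you are on the paper's track.

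However, there is a genuine gap at your central step: you never say how the numbers $\phi_j^{(\ell)}$ are actually obtained, and the ``census of all subsets of $\A^2_q$ of sizes up to $6$'' you propose is not the right census and would not suffice. The quantities $\phi_j^{(\ell)}$ are alternating sums of $\N$-graded Betti numbers, and these are supported only on the inclusion-minimal elements of the nullity strata $N_{i}$ (Theorem~\ref{prep}); their complements are conics, lines, line--point configurations, quadrilaterals, triangles, pairs and single points, i.e.\ sets of sizes up to $2q$, not $6$, and identifying them is the content of Proposition~\ref{minwords} and Theorem~\ref{nullity}. More importantly, the Betti contributions $\beta_{i,\sigma}$ are not counts of configurations but M\"obius-function/Euler-characteristic values (Theorem~\ref{prep}(b), Proposition~\ref{mu}); for instance $\beta_{2,\theta}=q$ for $\theta$ the complement of a line plus an outside point (Lemma~\ref{sigma2}), and this single nontrivial local computation is indispensable because the Boij--S\"oderberg equations (Proposition~\ref{BS-HK}, Corollary~\ref{HK-BS}) supply only six relations for the seven unknown graded Betti numbers of $M_q$; the paper then solves for the rest, repeats with five equations for $M_q^{(1)}$, and uses the fact that $M_q^{(\ell)}$ is uniform for $\ell\ge 3$ with known pure linear resolutions. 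Your proposal contains neither the minimality/M\"obius mechanism nor the Boij--S\"oderberg step (nor any substitute for them), so it does not actually produce the values of $\phi_j^{(\ell)}$, and with them the $P_j(Z)$ and the $A_w^{(r)}$; flagging this step as ``the main obstacle'' does not close it. If instead you intended a direct Tutte-polynomial-style count of all subsets by size and nullity, that is a different (and in principle viable) route, but then the enumeration must cover all subsets of conics up to size $2q$ with inclusion--exclusion over multiple containing conics, which your size-$6$ census does not.
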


It may be noted that the formulas in parts (b), (c) and (d) of Theorem~\ref{Fallq} are consistent with those in part (a), in the sense that when we have two different expressions, say $w_1$ and $w_2$, for the weight $w$, then the actual value of $A_w^{(r)}$ is the sum of the formulas given in Theorem~\ref{Fallq}(a) for $A_{w_1}^{(r)}$ and $A_{w_2}^{(r)}$. For example, if $q=4$, then $q^2-q = q^2-4 =12$ and in this case, Theorem~\ref{Fallq}(a) gives  
$$
A_{q^2-q}^{(2)}=2q^3 + 3q^2 + q = 180 \quad \text{and} \quad 
A_{q^2-4}^{(2)}= \frac{q^8-4q^7+5q^6+q^5-6q^4+3q^3}{24} = 840,
$$
whereas Theorem~\ref{Fallq}(b) gives   $A_{12}^{(2)}= 1020$ and sure enough, $180 +840 = 1020$. With this in view, we obtain the following consequence of 
Theorem~\ref{Fallq}, which holds uniformly for $q\ge 3$.

\begin{corollary}\label{cor:UnifiedWtEnum}
Assume that $q\ge 3$ and $1\le r \le 6$. The $r$-th higher weight polynomial of $C_q  = \RM_q(2,2)$ is given by 
$$
  \sum_{w} A^{(r)}_w X^{q^2-w}Y^w
 $$
 where $w$ takes values in the (multi)set
 $$
 \{q^2 -2q, \; q^2 -2q+1, \; q^2 -q-1, \; q^2 -q, \; q^2 -q+1, \; q^2 -4, \; q^2 -3, \; q^2 -2, \; q^2 -1, \; q^2 -1, \; q^2 \}
 $$
 and $A_w^{(r)}$ is defined as in 
 Theorem~\ref{Fallq}(a). 
\end{corollary}


\section{Classification of Affine Conics and the First Weight Spectrum}
\label{sec3}

Our proof of  Theorem~\ref{Fallq} 
has the following 3 main steps. For each value of $q$:
\begin{itemize}
\item[(1)] 
Find the $\mathbb{N}$-graded minimal free resolutions of the Stanley-Reisner ring of the  matroid associated to a parity check matrix of $C_q$,  and  each of the $6$ elongations of this matroid. 
(These terms will be explained in Section \ref{matroids}.) 

\item[(2)] Use Step (1) and the procedure described in \cite{JRV} to find the so-called generalized weight polynomials $P_j(Z)$, for $j=0,\ldots,q^2,$ for each 
of the codes $C_q$. These polynomials $P_j$ are such that $P_j(q^m)$ gives the number of codewords of 
weight $j$ of
the extension code
$C_q \otimes _{\F_q}\F_{q^m}$. Concretely, we will calculate the polynomials $P_j(Z)$ by using the formula:
$$P_j(Z)=\sum_{\ell=0}^{6}(\phi_j^{(\ell)}-\phi_j^{(\ell-1)})Z^{\ell},
$$
where the $\phi_j^{(\ell)}$ are certain integers that we obtain from the $\mathbb{N}$-graded minimal free resolutions of the above-mentioned matroid and its elongations. The notation will be explained in detail in Section \ref{matroids}.
\item[(3)]  Use Step (2) to find the $A_{w}^{(i)}$, using the well known formula 
$$
P_w(q^e)= \sum_{r=0}^e
A_w^{(r)}\prod_{i=0}^{r-1}(q^e-q^i) \quad \text{for } e\ge 0,
$$
(given in \cite{HKM}, as pointed out in \cite{J}) repeatedly, starting with $e=0$.
\end{itemize}

The last two main steps are somewhat ``mechanical'' and  they only involve rather straightforward, but tedious computations.\footnote{A large computational part was done using the computer algebra software SageMath \cite{sage}. We have made the code used for the computations available at: \url{https://github.com/RakhiPra/BettiNumbers_HWS_ReedMullerCodes.git}
}
Hence we will devote most of our attention to Step (1). A vital ingredient there is the following result. Here, and hereafter, by a \emph{conic} in $\mathbb{P}^2_q$ we mean the set of zeros in $\mathbb{P}^2_q$ of a nonzero homogeneous polynomial  of degree $2$ in $3$ variables with coefficients in $\Fq$. 

\begin{proposition} \label{Hirsch}
In $\mathbb{P}^2_q$, for all prime powers $q \ge 2$,  the $\frac{q^6-1}{q-1}$ conics are as follows: 
\begin{enumerate}[itemsep=1ex]
\item[1.] 
$q^2+q+1$ double lines with $q+1$ $\Fq$-rational points,
\item[2.] 
$\frac{q(q+1)(q^2+q+1)}{2}$ pairs of two distinct lines with $2q+1$ $\Fq$-rational points, 
\item[3.] 
$q^5-q^2$ irreducible conics with $q+1$ $\Fq$-rational points, 
\item[4.] 
$\frac{q^4-q}{2}$ conics (pairs of Galois-conjugate lines defined over $\F_{q^2}\setminus \Fq$) that just possess a single $\Fq$-rational point each. 
\end{enumerate}
\end{proposition}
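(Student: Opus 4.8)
The plan is to classify conics in $\PP^2_q$ according to the factorization type of the underlying quadratic form and then count each type. Let $Q(x,y,z)$ be a nonzero homogeneous polynomial of degree $2$ over $\Fq$. The ambient projective space of such forms (up to scaling) has $\frac{q^6-1}{q-1}$ elements, which accounts for the total claimed. First I would separate the reducible case from the irreducible case: $Q$ is reducible over $\overline{\Fq}$ if and only if the associated symmetric matrix (or, in characteristic $2$, the associated quadratic form together with its polar bilinear form) is degenerate, i.e. has rank $\le 2$. So the strategy is: (i) count forms of rank $1$; (ii) count forms of rank $2$, splitting further according to whether the two linear factors are $\Fq$-rational or a Galois-conjugate pair over $\F_{q^2}$; (iii) obtain the number of irreducible conics by subtracting from $\frac{q^6-1}{q-1}$, and independently verify the point count $q+1$ for an irreducible conic.

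For the reducible cases, the counting is elementary linear algebra on $\PP^2_q$ viewed as the space of lines. A \emph{double line} $L^2$ is determined by the line $L$, and there are $|\PP^2_q| = q^2+q+1$ lines in $\PP^2_q$, giving case 1; each double line has the same point set as $L$, namely $q+1$ points. A \emph{pair of distinct $\Fq$-rational lines} $L_1 L_2$ is an unordered pair of distinct lines, so there are $\binom{q^2+q+1}{2} = \frac{(q^2+q+1)(q^2+q)}{2} = \frac{q(q+1)(q^2+q+1)}{2}$ of them, matching case 2; two distinct lines in $\PP^2_q$ always meet in exactly one point, so such a conic has $2(q+1) - 1 = 2q+1$ rational points. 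For case 4, a \emph{pair of Galois-conjugate lines} over $\F_{q^2}\setminus\Fq$ corresponds to a line of $\PP^2_{q^2}$ not fixed by Frobenius; there are $|\PP^2_{q^2}| = q^4+q^2+1$ lines over $\F_{q^2}$, of which $q^2+q+1$ are Frobenius-fixed (i.e. defined over $\Fq$), leaving $q^4+q^2+1 - (q^2+q+1) = q^4 - q = q(q-1)(q^2+q+1)/1$... more carefully, these come in Frobenius-orbits of size $2$, so the number of conjugate pairs is $\frac{q^4 - q}{2} = \frac{q(q-1)(q^2+q+1)}{2}$, matching case 3's count in the statement (the fourth item). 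The unique $\Fq$-rational point of such a conic is the intersection point $L \cap L^{\sigma}$, which is Frobenius-fixed hence in $\PP^2_q$, and it is the only rational point since no rational point lies on $L$ alone.

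Finally, for irreducible conics: by subtraction, their number is
\[
\frac{q^6-1}{q-1} - (q^2+q+1) - \frac{q(q+1)(q^2+q+1)}{2} - \frac{q(q-1)(q^2+q+1)}{2}.
\]
Since $\frac{q(q+1)}{2} + \frac{q(q-1)}{2} = q^2$, the three reducible terms sum to $(q^2+q+1)(1 + q^2) = q^4 + q^3 + 2q^2 + q + 1$, and $\frac{q^6-1}{q-1} = q^5 + q^4 + q^3 + q^2 + q + 1$, so the difference is $q^5 + q^2 - 2q^2 = q^5 - q^2$, matching case 3. For the point count on an irreducible conic $C$: over $\overline{\Fq}$, $C$ is a smooth plane conic, hence isomorphic to $\PP^1$; I would argue it has a rational point (a smooth conic over a finite field always does, e.g. by Chevalley--Warning or by a direct counting/pigeonhole argument on the quadratic form in $3$ variables), and then projection from that rational point gives a bijection between $C(\Fq)$ and $\PP^1(\Fq)$, yielding exactly $q+1$ points.

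The main obstacle I anticipate is the characteristic $2$ case in step (ii): when $q$ is even the correspondence between quadratic forms and symmetric matrices breaks down (every alternating form is symmetric, the ``rank'' must be read off from the form itself rather than a matrix, and there are extra degenerate forms such as $x^2$ whose radical behaves differently). To handle this uniformly I would phrase the classification intrinsically — a conic is a double line iff $Q$ is a square of a linear form, a line-pair iff $Q$ factors as a product of two distinct linear forms over $\overline{\Fq}$, and irreducible otherwise — and verify the counts by the subtraction argument above, which is characteristic-free, rather than by case analysis on a symmetric matrix. The point-count verifications (especially existence of a rational point on an irreducible conic in characteristic $2$) also need the characteristic-free argument. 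This classification is classical and can be found in Hirschfeld's book \cite{Hi}; the contribution here is simply to record it in the exact form needed, so I would cite \cite{Hi} for the underlying geometry and supply only the short counting arguments.
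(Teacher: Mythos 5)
Your argument is correct, but note that the paper does not prove Proposition~\ref{Hirsch} at all: it simply cites \cite[p.~140]{Hi}, so your proposal supplies a proof where the paper gives only a reference. Your route — classify by factorization type of the form, count double lines as lines of $\PP^2_q$, rational line pairs as $\binom{q^2+q+1}{2}$, conjugate pairs as size-two Frobenius orbits on the $q^4-q$ lines of $\PP^2_{q^2}$ not defined over $\Fq$, and get the irreducible count by subtraction — is sound and characteristic-free, and your decision to avoid symmetric matrices is exactly the right way to dodge the $q$ even issue. Two small points are worth making explicit if you write this up: (i) the trichotomy is exhaustive because Frobenius permutes the (at most two) linear factors of a reducible form, so a square factor is $\Fq$-rational and a pair of distinct factors is either $\Fq$-rational or a conjugate pair over $\F_{q^2}$, and distinct pairs give distinct conics by uniqueness of factorization up to scalars; (ii) in the projection argument for the $q+1$ points of an irreducible (hence smooth) conic, the second intersection point of an $\Fq$-line through the chosen rational point is itself rational because the residual point of a degree-two divisor defined over $\Fq$ with one rational point is rational — this, together with Chevalley--Warning for existence, works uniformly in characteristic $2$. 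For comparison, the counting the paper actually carries out itself is the finer affine classification in Proposition~\ref{conicvariations}, whose proof uses incidence correspondences between conics and lines; your orbit-counting argument for item~4 and the subtraction for item~3 are in the same elementary spirit and could equally well be phrased that way.
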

This result can be found, for instance, in \cite[p. 140]{Hi}. 

\begin{remark} \label{Hirschfeld}
{\rm The values of the $A_w^{(1)}$, for all $w$, and all prime powers $q \ge 3$, will be derived from Proposition \ref{Hirsch}, and the additional analysis below, where  for each of the $4$ conic types listed in Proposition \ref{Hirsch}, 
one finds out how they intersect the line $L$ at infinity given by  $z=0$.
 This is because the supports of the (one-dimensional subspaces of) codewords corresponding to the conics above are the complements in 
$\mathbb{A}^2_q= \mathbb{P}^2_q - L$ of the zero sets of the conics. More precisely, for $q \ge 3$ and $w\ge 0$, 
\begin{equation}\label{eq:conic-wt}
A_w^{(1)} = \left| \left\{ \mathsf{C} : \mathsf{C} \text{ conic in $\mathbb{P}^2_q$ with } |\mathsf{C} \cap \mathbb{A}^2_q| = q^2 - w\right\} \right|.
\end{equation}
}
\end{remark}


\begin{proposition} \label{conicvariations}
The $\frac{q^6-1}{q-1}$ conics in $\mathbb{P}^2_q$ can be further classified as follows: 
\begin{enumerate}[itemsep=1ex, parsep=1ex]
\item[1.] The $q^2+q+1$ double lines in $\mathbb{P}^2_q$ are divided into $2$ categories: 
\begin{enumerate}[itemsep=0.5ex]
\item[{$(a)$}] $1$ double line $z^2=0$, with no points in $\mathbb{A}^2_q.$
\item[{$(b)$}] $q^2+q$ other double lines, each with $q$ zeros in $\mathbb{A}^2_q.$
\end{enumerate}
\item[2.] The $\frac{q(q+1)(q^2+q+1)}{2}$ pairs of 
distinct lines in $\mathbb{P}^2_q$ are divided into $3$ categories:
\begin{enumerate}[itemsep=0.5ex]
\item[{$(c)$}] $q^2+q$ line pairs of the type $F(x,y,z)z=0$, where $F(x,y,z)$ is a linear form not proportional to $z$. 
So, one of the lines is $L$ and the affine part is 
the line given by $F(x,y,1)=0.$
These conics have $q$ zeros in $\mathbb{A}^2_q$.
\item[{$(d)$}] 
$\frac{q^4+q^3}{2}$ line pairs that intersect outside $L$. These conics have $2q-1$ zeros in $\mathbb{A}^2_q.$
\item[{$(e)$}] $\frac{q(q^2-1)}{2}$ line pairs, each of which intersect at a single point of the line $L$ at infinity. Such line pairs have $2q$ zeros in $\mathbb{A}^2_q$, where they appear as parallel lines. 
\end{enumerate}
\item[3.] The $q^5-q^2$ irreducible conics in $\mathbb{P}^2_q$ are divided into $3$ categories:
\begin{enumerate}[itemsep=0.5ex]
\item[{$(f)$}] $\frac{q^3(q^2-1)}{2}$ conics, each  intersecting $L$ in two distinct points over $\F_q$. The ``finite parts'' can be thought of as hyperbolas in $\mathbb{A}^2_q$. 
These conics have $q-1$ zeros in $\mathbb{A}^2_q.$
\item[{$(g)$}] $q^2(q^2-1)$ conics being tangent to $L$ at one $\F_q$ rational point. The ``finite parts" can be thought of as parabolas in $\mathbb{A}^2_q$. These conics have $q$ zeros in $\mathbb{A}^2_q.$
\item[{$(h)$}] $\frac{q^3(q-1)^2}{2}$ conics that have no  $\F_q$-rational point on $L$. It is natural to think of these conics as ellipses. These conics have $q+1$ zeros in $\mathbb{A}^2_q.$
\end{enumerate}
\item[4.] The $\frac{q^4-q}{2}$ 
conics in $\mathbb{P}^2_q$ that just possess a single $\F_q$-rational point each, are divided into 2 categories:
\begin{enumerate}[itemsep=0.5ex]
\item[{$(i)$}] $\frac{q^3(q-1)}{2}$ conics, where the single point is not on the line $L$ at infinity. These conics have $1$ point each in $\mathbb{A}^2_q.$
\item[{$(j)$}] 
$\frac{q^3-q}{2}$ conics, where the single point is  on the line $L$ at infinity. These conics have $0$ points each in $\mathbb{A}^2_q.$ 
\end{enumerate}
\end{enumerate}
\end{proposition}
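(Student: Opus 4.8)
The plan is to start from the coarse list in Proposition~\ref{Hirsch} and refine each of its four families according to the position of the conic relative to the line at infinity $L=\{z=0\}$, writing $\mathbb{A}^2_q=\mathbb{P}^2_q\setminus L$ and using repeatedly the identity $|\mathsf{C}\cap\mathbb{A}^2_q|=|\mathsf{C}|-|\mathsf{C}\cap L|$ to read off the number of affine zeros once the projective geometry of $\mathsf{C}$ and its intersection with $L$ are understood.

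For the three ``reducible'' families the refinement is a matter of elementary incidence counting in $\mathbb{P}^2_q$ (and $\mathbb{P}^2_{q^2}$). A double line is $\ell^2$ for a unique line $\ell$; it is of type $(a)$ exactly when $\ell=L$ (the single conic $z^2=0$, with no affine zeros) and of type $(b)$ otherwise (the other $q^2+q$ lines, each meeting $L$ in one point, hence with $q$ affine zeros). For a pair $\{\ell_1,\ell_2\}$ of distinct $\Fq$-lines there are three possibilities: $L\in\{\ell_1,\ell_2\}$, which forces the other line to be one of the $q^2+q$ lines $\ne L$ and gives $q$ affine zeros (type $(c)$); or $\ell_1,\ell_2\ne L$ meeting in a point $P\in\mathbb{A}^2_q$, giving $2q-1$ affine zeros (type $(d)$); or $\ell_1,\ell_2\ne L$ meeting in a point $P\in L$, in which case the affine traces of $\ell_1,\ell_2$ are parallel and there are $2q$ affine zeros (type $(e)$). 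One counts the type-$(e)$ conics as $\sum_{P\in L}\binom{q}{2}=(q+1)\binom{q}{2}=\tfrac{q(q^2-1)}{2}$, and then the type-$(d)$ count follows by subtracting the $(c)$- and $(e)$-counts from the total $\tfrac{q(q+1)(q^2+q+1)}{2}$ supplied by Proposition~\ref{Hirsch}. Finally, a Galois-conjugate line pair is $\ell\bar{\ell}$ with $\ell$ defined over $\F_{q^2}\setminus\Fq$ and $\bar{\ell}$ its Frobenius conjugate; its unique $\Fq$-rational point is $P=\ell\cap\bar{\ell}$, and for fixed $P\in\mathbb{P}^2_q$ the number of such conics with rational point $P$ equals the number of Frobenius-conjugate pairs of non-$\Fq$-lines through $P$, namely $\tfrac{(q^2+1)-(q+1)}{2}=\tfrac{q(q-1)}{2}$. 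Summing over the $q^2$ points of $\mathbb{A}^2_q$ (type $(i)$, one affine zero) and over the $q+1$ points of $L$ (type $(j)$, no affine zeros) gives the stated numbers.

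The heart of the proof is the subdivision of the $q^5-q^2$ irreducible --- equivalently, smooth --- conics into types $(f)$, $(g)$, $(h)$. Here I would fix one smooth conic $\mathsf{C}$; since $\mathsf{C}\cong\mathbb{P}^1_q$ it has exactly $q+1$ rational points, and any line through a point of $\mathsf{C}$ meets $\mathsf{C}$ either only there or in exactly one further $\Fq$-point (a second intersection point is Galois-conjugate to a rational one, hence rational), so $\mathsf{C}$ has exactly $q+1$ \emph{tangent} lines and $\binom{q+1}{2}$ \emph{secant} lines, leaving $q^2+q+1-\binom{q+1}{2}-(q+1)=\tfrac{q(q-1)}{2}$ \emph{external} lines (which meet $\mathsf{C}$ in a conjugate pair over $\F_{q^2}$). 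Because $\mathrm{PGL}_3(\Fq)$ acts transitively both on the lines of $\mathbb{P}^2_q$ and on the smooth conics, the number of smooth conics tangent to (resp.\ secant to, resp.\ external to) a fixed line does not depend on the line; counting incidence flags in two ways then yields these numbers as $\tfrac{(q^5-q^2)(q+1)}{q^2+q+1}=q^2(q^2-1)$, $\tfrac{(q^5-q^2)\binom{q+1}{2}}{q^2+q+1}=\tfrac{q^3(q^2-1)}{2}$, and $\tfrac{(q^5-q^2)\,q(q-1)/2}{q^2+q+1}=\tfrac{q^3(q-1)^2}{2}$. Taking the fixed line to be $L$ identifies these with type $(g)$ ($L$ tangent, ``parabolas'', $q+1-1=q$ affine zeros), type $(f)$ ($L$ secant, ``hyperbolas'', $q+1-2=q-1$ affine zeros), and type $(h)$ ($L$ external, ``ellipses'', $q+1-0=q+1$ affine zeros). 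As a final check one verifies that the ten class sizes add up to $\tfrac{q^6-1}{q-1}$ and that every affine-zero count is consistent with the point counts in Proposition~\ref{Hirsch}.

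I expect the only genuinely non-routine step to be this irreducible case, and the two ingredients it uses --- that $\mathrm{PGL}_3(\Fq)$ is transitive on smooth conics (equivalently, that the family of size $q^5-q^2$ in Proposition~\ref{Hirsch} is a single projective-equivalence class) and the elementary smoothness geometry of a conic (a unique tangent at each rational point, and no line meeting it in three points) --- are both classical, so the real difficulty is bookkeeping rather than conceptual, and the whole argument goes through uniformly for every prime power $q\ge 2$. If one prefers to avoid the group action, an alternative for this step is to write a conic in the chart $z=1$ as $Q(x,y)+dxz+eyz+fz^2$ with $Q=ax^2+bxy+cy^2$ its restriction to $L$: then $\mathsf{C}\cap L$ has $0$, $1$, or $2$ points over $\Fq$ according as $\mathrm{disc}\,Q=b^2-4ac$ is a non-square, zero, or non-zero square; the projective classes $[Q]$ of these three kinds number $\tfrac{q(q-1)}{2}$, $q+1$, and $\tfrac{q(q+1)}{2}$; and for each such $[Q]$ exactly $q^2(q-1)$ of the $q^3$ conics restricting to it are smooth, the clean point being that the determinant of the associated symmetric $3\times 3$ matrix is affine-linear in $f$ with leading coefficient $ac-\tfrac14 b^2$ and constant term $-\tfrac14(cd^2-bde+ae^2)$, a binary form with the same discriminant as $Q$. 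This route is more explicit but needs a separate argument in characteristic $2$, where the symmetric-matrix description degenerates.
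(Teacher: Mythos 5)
Your proof is correct and follows essentially the same route as the paper: the double-line, line-pair and Galois-conjugate cases by direct elementary counting, and the irreducible case by double-counting conic--line incidence flags and dividing by the number $q^2+q+1$ of lines. The only minor differences are that you make the uniformity over lines explicit via $\mathrm{PGL}_3(\Fq)$-transitivity and obtain class $(h)$ by a third flag count over external lines, whereas the paper gets it by subtracting the counts in $(f)$ and $(g)$ from $q^5-q^2$.
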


\begin{proof}
The cases $(a)$--$(c)$ follow from straightforward counting.
Next, $(d)$ follows from Proposition~\ref{Hirsch} and from $(c)$ and $(e)$ since 
$$
\frac{q(q+1)(q^2+q+1)}{2} - (q^2+q) - \frac{q(q^2-1)}{2}=\frac{q^4+q^3}{2}.
$$
To prove (e), observe that there are $q+1$ points at infinity, and through each of them,  there are 
exactly $\binom{q}{2}$ pairs of distinct lines different from the line at infinity.
Hence there are $(q+1){\binom{q}{2}} = \frac{q(q^2-1)}{2}$ line pairs of the kind asserted in (e). 

In the case $(f)$, one looks at the incidence set 
$$
\left\{\left(\text{irreducible conic } \mathsf{C}\text{ in } \mathbb{P}^2_q,\text{ line in }\mathbb{P}^2_q \right) : \text{ the line intersects } \mathsf{C}\text{ in 2 points}\right\}.
$$ 
This incidence set clearly has $(q^5-q^2)\binom{q+1}{2}$ points. Projecting down on the right factor, one finds that there are 
$$
\frac{(q^5-q^2)\binom{q+1}{2}}{{q^2+q+1}}= \frac{q^5-q^3}{2}
$$ 
irreducible conics that intersect a fixed line in $2$ points. Since the line at infinity is no exception, we obtain $(f)$.

To prove $(g)$, one looks at a similar incidence set:
$$
\left\{\left(\text{irreducible conic } \mathsf{C}\text{ in } \mathbb{P}^2_q, \text{ line in }\mathbb{P}^2_q \right) : \text{ the line is tangent to  } \mathsf{C} \right\}.
$$
This incidence set clearly has $(q^5-q^2)(q+1)$ points. Projecting down on the right factor, one finds that there are 
$$
\frac{(q^5-q^2)(q+1)}{q^2+q+1}= q^4-q^2 
$$
irreducible conics that are tangent to a fixed line, in particular to the line $L$. 

To prove $(h)$, one takes the  number $q^5-q^2$ of all irreducible conics in $\mathbb{P}^2_q$, and subtracts from it  the numbers of conics from $(f)$ and $(g)$. 

Finally, to prove $(i)$ and $(j)$, we observe that among the $\frac{q^4-q}{2}$ conics in part 4 of Proposition~\ref{Hirsch} that just possess a single $\F_q$-rational point, the number, say $N$, of conics containing a given $P\in \mathbb{P}^2_q$ is 
independent of the choice of $P$. Since there are $q^2+q+1$ points in $\mathbb{P}^2_q$, we see that 
$N= \frac{q^4-q}{2(q^2+q+1)}$. Further, since there are $q^2$ points in $\mathbb{A}^2_q$ and $q+1$ points in $L$, it follows that the number of conics as in $(i)$ and $(j)$ are given by
$Nq^2 = \frac{q^3(q-1)}{2}$ and $N(q+1) = \frac{q^3-q}{2}$, respectively.
\end{proof}

\begin{corollary} \label{corconic}
{$(a)$} For $q \ge 3$, the first weight spectrum of $C_q$ is as follows:
\begin{eqnarray*}
\begin{aligned}[c]
 A_{q^2-2q}^{(1)}\quad & = \quad \frac{q(q^2-1)}{2},\\
 A_{q^2-2q+1}^{(1)}\quad & = \quad \frac{q^4+q^3}{2},\\
 A_{q^2-q-1}^{(1)}\quad & = \quad \frac{q^3(q-1)^2}{2},\\
  A_{q^2-q}^{(1)}\quad & = \quad q^4+q^2+2q,\\
\end{aligned}
\qquad
\begin{aligned}[c]
A_{q^2-q+1}^{(1)}\quad & = \quad \frac{q^3(q^2-1)}{2},\\
 A_{q^2-1}^{(1)}\quad & = \quad \frac{q^4-q^3}{2},\\
 A_{q^2}^{(1)}\quad &= \quad \frac{q^3-q+2}{2}, \\
 A_w^{(1)}\quad & = \quad 0, \text{ for all other }w.
\end{aligned}
\end{eqnarray*}









{$(b)$} For $q=2$, we have:  
$$
A_1^{(1)}=4, \ A_2^{(1)}=6, \ A_3^{(1)}=4, \ A_4^{(1)}=1, \text{ and $A_w^{(1)}=0$ for all other $w$.}
$$
\end{corollary}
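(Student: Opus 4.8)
The statement will follow by combining the dictionary of Remark~\ref{Hirschfeld} with the classification in Proposition~\ref{conicvariations}, so the proof is mostly bookkeeping.

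\emph{Part $(a)$.} First I would spell out the correspondence behind Remark~\ref{Hirschfeld}. For $q\ge 3$ we have $\dim C_q=6$, and there are exactly $q^6$ polynomials $f\in\Fq[x,y]$ of degree $\le 2$, so the evaluation map $f\mapsto\big(f(P)\big)_{P\in\A^2_q}$ is a bijection onto $C_q$. Homogenization $f\mapsto\widetilde f$, with $\widetilde f(x,y,z)=z^2f(x/z,y/z)$, is a bijection from degree-$\le 2$ polynomials in $x,y$ up to scalars onto homogeneous quadratic forms in $x,y,z$ up to scalars (its inverse being $F\mapsto F(x,y,1)$). Chaining these identifications, a one-dimensional subcode $D=\langle\mathrm{ev}(f)\rangle$ corresponds to a unique conic $\mathsf{C}$ in $\PP^2_q$, namely the one cut out by $\widetilde f$; and since $(a:b:1)\in\mathsf{C}$ exactly when $f(a,b)=0$, we get $\Supp(D)=\A^2_q\setminus\mathsf{C}$, hence $\wt(D)=q^2-\left|\mathsf{C}\cap\A^2_q\right|$. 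This is the formula of Remark~\ref{Hirschfeld}: $A^{(1)}_w=\left|\{\mathsf{C}:\left|\mathsf{C}\cap\A^2_q\right|=q^2-w\}\right|$.

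It then remains to group the ten classes $(a)$--$(j)$ of Proposition~\ref{conicvariations} by their number of affine points $q^2-w$. For $q\ge 3$ the seven values that occur, namely $0,1,q-1,q,q+1,2q-1,2q$, are pairwise distinct, so only two of the classes ever share a value: $w=q^2$ (no affine points) receives classes $(a)$ and $(j)$, contributing $1+\tfrac{q^3-q}{2}=\tfrac{q^3-q+2}{2}$; and $w=q^2-q$ ($q$ affine points) receives classes $(b)$, $(c)$, $(g)$, contributing $(q^2+q)+(q^2+q)+q^2(q^2-1)=q^4+q^2+2q$. The weights $q^2-1$, $q^2-q+1$, $q^2-q-1$, $q^2-2q+1$, $q^2-2q$ come from the single classes $(i)$, $(f)$, $(h)$, $(d)$, $(e)$ respectively, whose cardinalities are exactly the claimed ones, and $A^{(1)}_w=0$ for all other $w$. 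This proves $(a)$.

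\emph{Part $(b)$.} For $q=2$ the dictionary above is unavailable, since $C_2$ is defined by the $4\times4$ matrix $G_2'$ and equals $\F_2^4$; but then every one-dimensional subspace contains exactly one nonzero vector, so $A^{(1)}_w$ is the number of weight-$w$ vectors in $\F_2^4$, i.e.\ $\binom 4 w$, which gives the stated values. The one point in part $(a)$ that deserves genuine care is the injectivity of the evaluation map, which rests on $\dim C_q=6$ and hence on $q\ge3$; this is precisely why $q=2$ must be treated on its own. Everything else is arithmetic against Proposition~\ref{conicvariations}.
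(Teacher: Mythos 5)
Your proposal is correct and follows essentially the same route as the paper: part $(a)$ is Remark~\ref{Hirschfeld} combined with grouping the classes of Proposition~\ref{conicvariations} by their number of affine zeros (using that the values $0,1,q-1,q,q+1,2q-1,2q$ are pairwise distinct and $\ne q^2$ for $q\ge 3$), and part $(b)$ is the direct count in $C_2=(\mathbb{F}_2)^4$. The only difference is that you also spell out the bijection between one-dimensional subcodes and conics underlying Remark~\ref{Hirschfeld}, which the paper states without proof; this is a harmless (and correct) addition rather than a different argument.
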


\begin{proof} 
From Proposition \ref{conicvariations}, we see that the number of zeros in $\mathbb{P}^2_q$ of a conic  in $\mathbb{P}^2_q$ can be $0, \, 1 , \,  q-1, \, q, \, q+1, \, 2q-1$ or $2q$. When $q\ge 3$, these values are pairwise distinct and also different from $q^2$. This together with the relation \eqref{eq:conic-wt} readily yields the assertions in $(a)$. [For instance, parts 1(b), 2(c) and 3(g) of Proposition \ref{conicvariations} show that $A_{q^2-q}^{(1)}= (q^2+q) + (q^2+q) + q^2(q^2-1)=q^4+q^2+2q$.] 
When $q=2$, it suffices to note that $C_2=(\mathbb{F}_2)^4$, and moreover, for any positive integer $w$, 
$$
A_w^{(1)} 
= |\{ \mathbf{w} \in (\mathbb{F}_2)^4 : \wt({\bf w}) = w\}|.
$$
Thus the result in (b) follows from a straightforward counting. 
\end{proof}

\begin{remark} {\rm  If we had to use Proposition \ref{conicvariations} when $q=2$, then our definition of $C_2$ would require us to consider restricted class of conics given by nonzero homogeneous polynomials of degree $2$ in $3$ variables with no term in $xz$ or $yz$. Thus  we would need a variant of Proposition \ref{conicvariations} for such conics. But it is much simpler to argue directly by observing that  $C_2=(\mathbb{F}_2)^4$. 
}
\end{remark}

\begin{remark} \label{pardon}
{\rm The preceding proposition and corollary are included for completeness, and in order to give an overview for later use. A result equivalent to Corollary~\ref{corconic} is given in \cite[Proposition 2.12]{KM}.
Also, in \cite[Lemma 4.3/Theorem 4.4]{KM}, results are given that can be used to deduce our formulas for 
$A^{(2)}_w$. 
But as far as we know, there is no prior determination of general formulas for the $A^{(r)}_w$  for $r \ge 3$ and $q \ge 7.$
}
\end{remark}


We now embark on an analysis of codewords that have an inclusion-minimal support. This is equivalent to studying which of the conics listed above have inclusion-maximal zero sets in $\mathbb{A}_q^2$ over $\Fq.$ This analysis has no implication for the determination of the $A^{(1)}_w$, but the results will be useful for calculating the $A^{(r)}_w$, with $r \ge 2$, using the methods of Section~\ref{matroids}. 
We begin by recalling a version of Bezout's theorem (\cite[Theorem 2.1]{FRBZ}) for plane curves over arbitrary fields. 
\begin{theorem} \label{Bezout}
Suppose  $X$ and $Y$ are two plane projective curves defined over a field $F$, such that they do not have a common component (in other words, $X$ and $Y$ are defined by nonzero homogeneous polynomials in three variables with coefficients in $F$ such that these polynomials do not have a common divisor of positive degree). Then the total number of intersection points of $X$ and $Y$ with homogeneous coordinates in an algebraically closed field $K$ that contains $F$ is at most the product of the degrees of $X$ and $Y$.
\end{theorem}


\begin{proposition} \label{minwords}
For $q \ge 7$, the inclusion-maximal zero sets of conics in $\mathbb{A}_q^2$ over $\F_q$
 are precisely those listed in $(e), \, (d), \, (h), \, (g)$, and $(f)$ of Proposition~\ref{conicvariations}, with
 $2q, \, 2q-1, \, q+1, \, q$, and $q-1$ zeros each, respectively.
 \end{proposition}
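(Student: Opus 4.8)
The statement is purely about comparing the zero sets of the ten conic classes $(a)$–$(j)$ of Proposition~\ref{conicvariations} under inclusion inside $\mathbb{A}^2_q$, so the plan is to work directly from the data already tabulated there: each class has a fixed number of $\Fq$-rational zeros in $\mathbb{A}^2_q$, namely $0,q,q,2q-1,2q,q-1,q,q+1,1,0$ for $(a),\dots,(j)$ respectively. First I would dispose of the classes with ``small'' zero sets: $(a)$ and $(j)$ have empty zero set in $\mathbb{A}^2_q$, and $(i)$ has a single point; since $q\ge 7$, each of these is trivially contained in the zero set of many conics of the other classes (e.g. the single point of an $(i)$-conic lies on some affine line, hence inside the $q$-point zero set of a suitable $(b)$- or $(c)$-conic, and the affine part of a $(j)$-conic is contained in anything). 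So $(a),(i),(j)$ are not inclusion-maximal, and the only candidates for maximality are $(b),(c),(d),(e),(f),(g),(h)$.

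Next I would compare $(b)$ and $(c)$ against $(d),(e),(g)$. A class-$(b)$ conic is a double line $\ell^2=0$ with $\ell$ not proportional to $z$; its zero set in $\mathbb{A}^2_q$ is the affine line $\ell=0$, which has exactly $q$ points and is contained in the zero set of the class-$(e)$ conic $\ell\ell'=0$ for any affine line $\ell'$ parallel to $\ell$ (giving $2q$ points), and also in the zero set of the class-$(d)$ conic $\ell\ell''=0$ for any affine line $\ell''$ meeting $\ell$ in $\mathbb{A}^2_q$ (giving $2q-1$ points). Hence $(b)$ is not maximal. The same argument verbatim kills $(c)$, since a class-$(c)$ conic $Fz=0$ also has as its affine zero set the single affine line $F=0$. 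Likewise a class-$(g)$ parabola has $q$ affine points; here I would argue that any $q$-point conic zero set is strictly contained in a $(2q{-}1)$- or $2q$-point one, or more carefully, that through the $q$ points of a parabola one can pass a reducible conic with more affine zeros — this needs the mild combinatorial check that a parabola's $q$ affine points do not force maximality, which for $q\ge 7$ follows because $q<2q-1$ and one can always complete an affine point set of size $q$ lying on an irreducible conic to the zero set of some reducible conic containing it. I expect this ``completion'' step — showing that the $q$-point set of a parabola, and the $q$-point affine lines, are genuinely \emph{properly} contained in some other conic's zero set rather than merely equinumerous with a subset — to be the main obstacle, and it is where the hypothesis $q\ge 7$ is actually used (for small $q$ there is not enough room, which is exactly why parts $(b)$–$(e)$ of Theorem~\ref{Fallq} differ).

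Finally, for the surviving classes $(d),(e),(f),(h)$ I would show each is inclusion-maximal. For $(e)$ (two parallel affine lines, $2q$ points) and $(d)$ (two affine lines meeting in $\mathbb{A}^2_q$, $2q-1$ points): a conic is cut out by a degree-$2$ equation, and any conic whose affine zero set strictly contains $2q-1$ collinear-in-pairs points must, by Bézout-type reasoning on $\mathbb{P}^2_q$, be the same conic, since a union of two lines is already the maximal reducible configuration and an irreducible conic meets $\mathbb{A}^2_q$ in at most $q+1<2q-1$ points for $q\ge 3$. For $(h)$ (an ellipse, $q+1$ affine points) and $(f)$ (a hyperbola, $q-1$ affine points): here I would note that an irreducible conic is uniquely determined by its $\Fq$-points once it has at least $5$ of them (no three collinear), so no conic properly contains an ellipse's or hyperbola's affine zero set unless it coincides with it — except one must also rule out a \emph{reducible} conic containing all $q+1$ (resp. $q-1$) points of the irreducible conic, which is impossible since those points are in general position (no three collinear) while a reducible conic's zero set is a union of two lines and hence contains no such configuration of size $\ge 5$ when $q\ge 7$. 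Assembling these cases gives exactly the list $(e),(d),(h),(g),(f)$ claimed, with the stated zero-set sizes $2q,\,2q-1,\,q+1,\,q,\,q-1$.
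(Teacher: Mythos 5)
Your proposal contains a genuine error in the treatment of class $(g)$, and it is not a minor slip: in your middle paragraph you argue that a parabola's $q$ affine points are \emph{not} inclusion-maximal, claiming that ``one can always complete an affine point set of size $q$ lying on an irreducible conic to the zero set of some reducible conic containing it.'' This claim is false for $q\ge 7$ (indeed already for $q\ge 5$): a line pair containing $q\ge 5$ points must have at least $3$ of them on one of the two lines, but no $3$ points of an irreducible conic are collinear (a line meets an irreducible conic in at most $2$ points by Bezout). Containment in another \emph{irreducible} conic is likewise impossible, since $q\ge 5$ points in general position determine the conic uniquely (equivalently, two distinct conics meet in at most $4$ points). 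So the parabolas of class $(g)$ \emph{are} inclusion-maximal, which is exactly what the statement asserts and what the paper proves; the crucial distinction you blur is that the $q$-point zero sets of classes $(b)$ and $(c)$ are \emph{collinear} (an affine line, which extends to a parallel or intersecting line pair), whereas the $q$ points of a parabola are not. Your final paragraph then lists only $(d),(e),(f),(h)$ as ``surviving'' yet concludes with the five-member list including $(g)$, so the writeup is also internally inconsistent.

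Relatedly, you misidentify where the hypothesis $q\ge 7$ is used. It is not needed to enable any ``completion'' of $q$-point sets into reducible conics; it is needed in the opposite direction, to guarantee that the \emph{smallest} irreducible case, the hyperbolas of class $(f)$ with $q-1$ affine points, has at least $6\ge 5$ points, so that these points cannot lie on a line pair (which would force $3$ on a line) nor on a different irreducible conic. This is precisely what breaks for small $q$: for $q=5$ the hyperbola's $4$ points do lie on line pairs, for $q=4$ the parabola's $4$ points do, and for $q=3$ the ellipse's $4$ points do (Corollary~\ref{smallq}), which is why those classes drop out of the maximal list there. The rest of your outline (disposal of $(a),(i),(j)$; non-maximality of $(b),(c)$ via parallel or intersecting line pairs; maximality of $(d),(e)$ via Bezout forcing both lines; maximality of $(h)$ and $(f)$ via the five-point determination of an irreducible conic) is essentially the paper's argument and is fine, but the $(g)$ step must be replaced by the same ``no $3$ collinear plus five points determine the conic'' argument rather than the false completion claim.
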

\begin{proof}
 The zero sets of the conics in  Proposition \ref{conicvariations}$(a), (b), (c), (i)$, and $(j)$
 (namely, $\emptyset$, a double line, a line, a point, and $\emptyset$) are properly contained in zero sets of some other conics, e.g., those in Proposition \ref{conicvariations}$(e)$; so these are not inclusion-maximal. 
 The zero sets of type $(e)$ in  Proposition \ref{conicvariations} are cardinality-maximal and therefore inclusion-maximal.
 Moreover, for $q \ge 3$ (and in particular, for $q\ge 7$),   the zero sets of type $(d)$ in Proposition \ref{conicvariations} are also inclusion-maximal, since $q$ points on each line determine  that line uniquely, and a  pair of intersecting lines cannot then be contained in any other line pair, in particular not in one consisting of parallel lines, like in Proposition \ref{conicvariations}$(e)$.
 
 It remains to study the irreducible conics in  Proposition \ref{conicvariations}$(f),(g),(h)$. The most ``critical'' one is type $(f)$, which has only $q-1$ points in its zero set in $\mathbb{A}^2_q$. But for $q \ge 7$, we have $q-1 \ge 6$. These $6$ points cannot  be contained in a line pair. If it were so, then one of these lines must contain at least $3$ points. By Bezout's theorem the conic, taken from $(f)$, must then be reducible, which it isn't. Hence, the conics from  Proposition~\ref{conicvariations}$(f)$ are not contained in line pairs from  Proposition~\ref{conicvariations}$(d)$ and $(e)$. The same argument applies to the conics from  Proposition~\ref{conicvariations}$(g)$ and $(h)$;  so neither of them are contained in line pairs from  Proposition \ref{conicvariations}$(d)$ and $(e)$.
 Since the conics from  Proposition \ref{conicvariations}$(h)$ are cardinality-maximal among those 
conics that are not line pairs, it  is now clear that they are inclusion-maximal also. But it is also clear that the conics from  Proposition~\ref{conicvariations}$(g)$ cannot be contained in conics from $(h)$, and that the conics from  Proposition~\ref{conicvariations}$(f)$ cannot be contained in the conics from $(g)$ or $(h)$.
 This is because all irreducible conics, which we treat here, have at least $5$ (in fact $6$) $\F_q$-rational points in $\mathbb{A}^2_q$, and $5$ points on an irreducible conic always define the conic uniquely. Moreover, two of them intersect in at most $4$ points by Bezout's theorem. This gives the conclusion.
 \end{proof}
 
 The techniques of the proof above also give:
 \begin{corollary} \label{cor:smallq} 
 Suppose $q$ is a prime power with $q < 7$.
 \begin{enumerate}
     \item[$(a)$] For $q=5$, the inclusion-maximal zero sets of conics in $\mathbb{A}_q^2$ over $\F_q$
 are precisely those listed in  Proposition \ref{conicvariations}$(e), \, (d), \, (h)$, and $(g)$ with
 $2q, \, \hbox{$2q-1$}, \, \hbox{$q+1$}$, and $q$ zeros each, respectively.
 \item[$(b)$] 
 For $q=4$, the inclusion-maximal zero sets of conics in $\mathbb{A}_q^2$ over $\F_q$
 are precisely those listed in  Proposition~\ref{conicvariations}$(e), \, (d), \, (h)$ with
 $2q, \, 2q-1, \, q+1$ zeros each, respectively.
 \item[$(c)$] 
For $q=3$, the inclusion-maximal zero sets of conics in $\mathbb{A}_q^2$ over $\F_q$
 are precisely those listed in  Proposition \ref{conicvariations}$(e)$ and $(d)$,  with
 $2q$ and $2q-1$ zeros each, respectively.
 \item[$(d)$] 
 For $q=2$, the zero set of conics in  Proposition \ref{conicvariations}$(e)$ is all of $\mathbb{A}_q^2$, which then gives rise to the zero codeword, while the conics in  Proposition \ref{conicvariations}$(d), (h)$ give zero sets of cardinality $2q-1=q+1=n-1=3$,  which are maximal among those not giving the whole plane. Hence they give inclusion-minimal non-zero codewords. None of the other types give zero sets that are maximal among those not giving the whole plane.
  \end{enumerate}
 \end{corollary}
 
 \begin{proof}
 For $q=5$, the $q-1=4$ points in  Proposition \ref{conicvariations}$(f)$ are contained in a line pair.
  Also, for $q=4$,  the $q=4$ points  in  Proposition \ref{conicvariations}$(g)$ are contained in a line pair.
    Likewise, for $q=3$,  the $q+1=4$ points  in Proposition \ref{conicvariations}$(h)$ are contained in a line pair.
    For $q=2$, the assertions in $(d)$ are obvious. 
 \end{proof}




\section{Matroids and Betti Numbers} \label{matroids}

In this section, we shall review some basic facts about matroids, and highlight their connections with linear codes and Betti numbers of associated Stanley-Reisner rings.  
Much of this section is similar to the corresponding one in \cite{JV20}.
Throughout this section, we let $E$ denote a finite set. We begin with one of the several equivalent definitions of the basic notion of a matroid. 

\begin{definition}\label{def:matroid} 
{\rm A (finite) \emph{matroid} $M$ on the finite ground set $E$ is an ordered pair $(E,\mathcal{I}),$ where 
$\mathcal{I}$ is a collection of subsets of $E$ having the following properties:  
\begin{enumerate}
\item[$1.$] $\emptyset \in \mathcal{I}.$ 
\item[$2.$] If $I_1\in\mathcal{I}$ and $I_2\subseteq I_1,$ then $I_2\in\mathcal{I}.$ 
\item[$3.$] If $I_1,I_2\in \mathcal{I}$ and $|I_1|<|I_2|,$ then there exists an element $y \in I_2\setminus I_1$ such that $I_1 \cup \{y\} \in \mathcal{I}$.
\end{enumerate} }
\end{definition}
The subsets of $E$ that are elements of $\mathcal{I}$, are called \emph{independent sets} of the matroid $M$. A subset of $E$ that is not independent is called a \emph{dependent set}. 
An inclusion-maximal set in $\mathcal{I}$ is called a \emph{basis} of the matroid $M$. 
The third property in Definition~\ref{def:matroid} implies that any two bases of a matroid 
have the same cardinality. 

\begin{definition}\label{def:nullsetsetc}
{\rm 
Let $M = (E,\mathcal{I})$ be a matroid on $E$. 
\begin{itemize}
\item 
Given any subset $\sigma$ of $E$, the \emph{rank} of $\sigma$, denoted $r(\sigma)$, is the largest cardinality of an independent set contained in $\sigma$, while the \emph{nullity} of $\sigma$ 
is $n(\sigma)=|\sigma|-r(\sigma).$ We write $\rk(M) = r(E)$ and call it the \emph{rank} of $M$. It is not difficult to see that $0\le n(\sigma) \le |E|- \rk(M)$ for any $\sigma\subseteq E$. 
\item 
Given any subset $\sigma$ of $E$, if we let ${\mathcal{I}}_\sigma = \{ \tau \in \mathcal{I} : \tau \subseteq \sigma\}$, then $(\sigma,\, {\mathcal{I}}_\sigma)$ is a matroid, which is denoted by $M_\sigma$ and called the \emph{restriction} of $M$ to $\sigma$. 
\item 
A \emph{circuit} of $M$ is a minimal dependent set of $M$. A \emph{flat} of $M$ is a subset $\sigma \subseteq E$ such that $r(\sigma \cup \{y\}) > r(\sigma)$ for any $y \in E\setminus \sigma$. 
\item For each integer $i=0,\dots,|E|- \rk(M)$, we let 
$$
N_i = \{ \sigma : \sigma \subseteq E \text{ and } n(\sigma) = i\}
\quad \text{and} \quad
d_i(M) = \min\{|\sigma| : \sigma \in N_i\}.
$$
We may call $d_i(M)$ the $i$-th \emph{generalized Hamming weight} of $M$. 
\end{itemize}
}
\end{definition}

\begin{remark}\label{rem:newlynumbered}
{\rm
(i) The rank and nullity functions in a matroid $M = (E,\mathcal{I})$ satisfy the following basic property for any subsets $\sigma, \tau$ of $E$.
$$
r(\sigma \cup \tau) + r (\sigma \cap \tau) \le r (\sigma) + r(\tau)
\quad \text{and} \quad
n(\sigma \cup \tau) + n (\sigma \cap \tau) \ge n (\sigma) + n(\tau).
$$
In particular, taking $\tau = E\setminus \sigma$, we obtain $0\le n(\sigma) \le |E|-\rk(M)$ for any $\sigma \subseteq E$.
(ii) It is clear from Definition~\ref{def:nullsetsetc} that the circuits of a matroid $M$ are precisely the inclusion-minimal elements of $N_1$. Thus the two terms may be used interchangeably. 
}
\end{remark}
\begin{definition}
{\rm 
Let $M = (E,\mathcal{I})$ be a matroid on $E$. If we let $\mathcal{I}^*$ denote the collection of subsets $J$ of $E$ such that $J\cap B=\emptyset$ for some basis $B$ of $M$, then $M^*=(E, \mathcal{I}^*)$ is a matroid of rank $|E|-\rk(M)$ and it is called the \emph{dual} of $M$. It can be shown that the rank function of $M^*$, denoted by $r^*$, is given by 
\begin{equation}\label{eq:rrstar}
r^*(\sigma) = r(E\setminus \sigma) +|\sigma| - r(E) \quad \text{for any } \sigma \subseteq E.   
\end{equation}
}
\end{definition}

\begin{ex}
{\rm
Given any positive integers $s, n$ with $s\le n$, let $U(s,n)$ be the corresponding \emph{uniform matroid}, namely,  $(E, \mathcal{I})$, where $E=\{1, \dots , n\}$ and $\mathcal{I}$ consists of all subsets $\sigma$ of $E$ such that $|\sigma|\le s$. Clearly, $U(s,n) = (E, \mathcal{I})$ is a  matroid with $\rk\left( U(s,n)\right) = s$ and $d_i\left( U(s,n)\right) = s+i$ for $1 \le i \le n-s$. Also, it is easy to see that the dual of $U(s,n)$ is $U(n-s,n)$.
}
\end{ex}

\begin{ex}
{\rm
Let $C$ be a linear $[n,k]_q$-code and let $H$ be a parity check matrix of $C$. Suppose $M$ is the \emph{vector matroid} corresponding to $H$, i.e., $M=(E, \mathcal{I})$, where $E=\{1, \dots , n\}$ and $\mathcal{I}$ consists of subsets $\sigma$ of $E$ such that the columns of $H$ indexed by $\sigma$  are linearly independent. The matroid $M$ is independent of the choice of a parity check matrix of $C$, and we may denote it by $M_C$ and call it the  \emph{parity check matroid} of $C$.  Note that  $M_C$ is a matroid with  $\rk (M_C) = n-k$ and $d_i(C) = d_i(M_C)$ for $i=1, \dots , k$ (see e.g.~\cite{JV}). It is easy to see that the dual of $M_C$ is the vector matroid corresponding to a generator matrix of $C$.
 }
\end{ex}

One way to construct new matroids from a given matroid is to consider its elongations, which are defined as follows. We will see  later that these elongations are an important tool in order to find the higher weight spectra.

\begin{definition} \label{elong}
{\rm 
Let $M =(E,\mathcal{I})$ be a matroid on $E$ and let $\ell$ be a nonnegative integer. Define $M^{(\ell)} = \left(E, \mathcal{I}^{(\ell)}\right)$, where 
$$
\mathcal{I}^{(\ell)} = \left\{ I \cup \sigma : I \in \mathcal{I} \text{ and $\sigma \subseteq E$ with } |\sigma| \le \ell \right\}.
$$
Then  $M^{(\ell)}$ is a matroid on $E$ and it is called the \emph{$\ell$-th elongation matroid} of $M$. 
}
\end{definition}

We note that if $M$ and $\ell$ are as in Definition~\ref{elong}, then $M^{(0)} = M$, whereas if $\ell \ge |E| -\rk (M)$, then  $M^{(\ell)}$ is the trivial matroid on $E$ in which every subset of $E$ is independent. Also,  the rank and nullity functions $r^{(\ell)}$, $n^{(\ell)}$ on $M^{(\ell)}$ are given~by 
$$
r^{(\ell)}(\sigma) = \begin{cases}
    r(\sigma) + \ell  & \text{ if } r(\sigma) + \ell \le |\sigma|, \\
    |\sigma| & \text{ otherwise}
    \end{cases}
\quad \text{and} \quad
   n^{(\ell)}(\sigma) = \begin{cases}
    n(\sigma) - \ell  & \text{ if } n(\sigma) \ge  \ell, \\
    0 & \text{ otherwise.} 
\end{cases}
$$
In particular, $\rk \left(M^{(\ell)} \right)  =\min (|E|, \; \ell+\rk(M)) $ and $\mathcal{I}^{(\ell)} =\{ \sigma \subseteq E : n(\sigma) \le \ell\}$.


\begin{ex}\label{ex:unifelong}
{\rm If 
$M= U(s,n) $ and $0\le \ell \le n -s$, then $M^{(\ell)} = U(s+\ell, \, n) $.
}
\end{ex}
We now fix the notation and terminology for the matroid associated to the code that we are interested in. 

\begin{definition}
{\rm 
Let 
$E_q=\{1,2,\dots,q^2\}$ and let $M_q=(E_q,\mathcal{I}_q)$ be the parity check matroid of the Reed-Muller code  $C_q=\RM_q(2,2)$.  We may  identify $E_q$ with $\mathbb{A}^2_q$, for a fixed ordering of the points of the affine plane.
}
\end{definition}


Next, we review some basic facts about (abstract) 
simplicial complexes,  their Stanley-Reisner rings and their minimal free resolutions and Betti numbers. We will also state some useful properties of simplicial complexes arising from matroids. 


Let $\Delta$ be a simplicial complex on  $E$ (which means $\Delta$ is a collection of subsets of $E$ such that if $\sigma \in \Delta$ and $\tau \subseteq \sigma$, then $\tau\in \Delta$). The elements of $\Delta$ are called \emph{faces}, and maximal elements under inclusion are called \emph{facets}.
If for each nonnegative integer $i$, we let $f_i$ denote the number of faces of $\Delta$ of cardinality $i$, then the alternating sum 
$\sum_{i\ge 0} (-1)^{i+1} f_i$ is called the \emph{Euler characteristic} of $\Delta$ and denoted by $\chi(\Delta)$. 
If $E$ is nonempty, then one may use the alternative expression  
$\chi(\Delta) = \sum_{i\ge 0} (-1)^i D_i$, 
where $D_i$ is the number of dependent sets of cardinality $i$ contained in $E$, 
i.e., $D_i=|\{\tau\subseteq E: |\tau|=i \text{ and } \tau \notin \Delta\}|$. This is because if $E\ne \emptyset$, then $|E|\ge 1$ and 
$$
\sum_{i\ge 0} (-1)^i D_i - \sum_{i\ge 0} (-1)^{i+1} f_i = \sum_{i\ge 0} (-1)^{i} (D_i+f_i) 
= \sum_{i\ge 0} (-1)^{i} \binom{|E|}{i} = (1 -1)^{|E|} = 0. 
$$
Note that if $M =(E,\mathcal{I})$ is a matroid on $E$, then the collection $\mathcal{I}$ of independent sets for $M$ forms a simplicial complex, called a \emph{matroid complex}; the Euler characteristic of this simplicial complex may be denoted by $\chi(M)$ rather than  $\chi(\mathcal{I})$.

Now let us fix a field $\k$ and let $S$ be the polynomial ring over $\k$ in $|E|$ indeterminates indexed by $E$, say $\{X_e : e\in E\}$.  We denote by $\N^E$ 
  the set of all maps from $E$ into the set $\N$ of nonnegative integers.  
  Elements of $\N^E$ can be identified with the monomials in $S$, and for $\nu \in \N^E$, we denote by $\mathbf{x}^{\nu}$ the corresponding monomial, viz., $\prod_{e\in E} X_e^{\nu(e)}$, and by $|\nu|$ the degree of this monomial, viz., $\sum_{e\in E} \nu (e)$. The polynomial ring $S$ admits a natural $\N^E$-grading and an $\N$-grading given as follows:
  $$
  S = \bigoplus_{\nu \in \N^E}  S_{\nu}  \quad \text{and} \quad
  S = \bigoplus_{j\in \N} S_j, \quad \text{where} \quad S_{\nu} :=\k\mathbf{x}^{\nu} \quad \text{and} \quad  S_j := \bigoplus_{|\nu|=j} S_{\nu}.
  $$
  Note that for any $j\in \N$, the set $S_j$ consists of all homogeneous polynomials in $S$ of degree $j$ (including the zero polynomial). For any $\mu \in \N^E$ and any $m\in \N$, we denote by $S(-\mu)$ and $S(-m)$ the ring $S$ with the shifted grading given by 
  $$
  S(-\mu)_{\nu} = S_{\mu+\nu} \quad \text{and} \quad S(-m)_j = S_{m+j} \quad \text{for }  \nu \in \N^E \text{ and } j\in \N.
  $$
  The set $\{0,1\}^E$ of all maps from $E$ into $\{0,1\}$ is a subset of $\N^E$ and elements of $\{0,1\}^E$ can (and will) be identified with subsets of $E$. These correspond to squarefree monomials: given any $\sigma \subseteq E$, the corresponding monomial $\mathbf{x}^\sigma$ is $\prod_{e\in \sigma} X_e$. 
The \emph{Stanley-Reisner ideal} $I_\Delta$ of the simplicial complex $\Delta$ is the ideal of $S$ generated by the monomials corresponding to non-faces, and the   \emph{Stanley-Reisner ring} $R_\Delta$ of $\Delta$ is the corresponding residue class ring, i.e.,  
  \[
  I_\Delta = \left<\mathbf{x}^\sigma : \sigma \subseteq E \text{ and } \sigma \not \in \Delta\right> \quad 
  \text{and} \quad 
R_\Delta = S/I_\Delta
  \]
Clearly, $I_\Delta$ is a monomial ideal and thus $R_\Delta$ is $\N^E$-graded and it admits a minimal free resolution of the form 
\begin{equation} \label{SR}
0 \longleftarrow R_\Delta  \overset{\partial_0}{\longleftarrow} F_0  \overset{\partial_1}{\longleftarrow} F_1 \longleftarrow \cdots \overset{\partial_k}{\longleftarrow} F_k\longleftarrow 0 
\end{equation}
where each $F_i$ is a nonzero $\N^E$-graded free $S$-module of the form 
\[
F_i = \bigoplus_{\mu \in \N^{E}}S(-\mu)^{\beta_{i,\mu}}.
\] 
Here $\beta_{i,\mu}$ are nonnegative integers that are  independent of the choice of the minimal free resolution and they are called the \emph{$\N^{E}$-graded Betti numbers} of $\Delta$.
Moreover,  since $I_\Delta$ has square-free generators, the components $S(-\mu)^{\beta_{i,\mu}}$ of $F_i$ corresponding to $\mu \in \N^E\setminus \{0,1\}^E$ are all zero. (See, e.g., \cite[Corollary 1.40]{MS}.) Thus, we can, in fact write 
\[
F_i = \bigoplus_{\alpha \subseteq {E}}S(-\alpha)^{\beta_{i,\alpha}}\quad \text{and also} \quad 
F_i = \bigoplus_{j\in \N} S(-j)^{\beta_{i,j}}, 
\] 
where for $0\le i \le k$ and for any nonnegative integer  $j$, we have set 
\begin{equation} \label{eq:VariousBeta}
\beta_{i,j} = \sum_{|\alpha| = j}\beta_{i,\alpha}.
\end{equation}
We call $\beta_{i,j}$ 
the \emph{$\N$-graded Betti numbers} 
of $\Delta$.   
The minimal free resolution \eqref{SR} associated to $R_\Delta$ is said to be \emph{pure} of type $(d_0, d_1, \ldots, d_k)$ if for each $i=0,1, \dots , k$, the Betti number $\beta_{i,j}$ is nonzero if and only if $j=d_i$. If the resolution \eqref{SR} is pure and in addition, $d_1, \dots , d_k$ are consecutive, then it is said to be \emph{linear}. 

In case the simplicial complex $\Delta$ is a matroid complex given by the independent sets for a matroid $M$, then the Betti numbers $\beta_{i,\alpha}$ and $\beta_{i,j}$ are also independent of the choice of base field $\k$ (see, e.g., \cite[Remark 1]{JV}), and we call these the 
Betti numbers of $M$ (or of the code $C$ in case  $M$ is the parity check matroid $M_C$ of a linear code $C$). Moreover, the length $k$ of the minimal free resolution is  precisely  the rank of the dual matroid $M^*$ (and the dimension of $C$,  in case $M=M_C$ for a code $C$). The $\N^E$-graded and $\N$-graded Betti numbers of the $\ell$-th elongation $M^{(\ell)}$ of $M$ will be denoted by 
$\beta_{i,\alpha}^{(\ell)}$ and $\beta_{i,j}^{(\ell)}$, respectively. 

For a more detailed explanation of the concepts described in the preceding few paragraphs, see  
\cite[Pages 265-269]{HH} and \cite[Section 1.5]{MS}.

\begin{example} \label{bettiremark}
{\rm As mentioned in the introduction, Betti numbers are finer invariants for a linear Hamming metric code than its higher weight spectra.
In this paper we have determined all $\mathbb{N}$-graded Betti numbers of (all elongations of) the parity check matroid of our chosen code.   
We now give an example of two codes with same higher weight spectra, but different Betti numbers. The example is essentially taken from \cite[Example 6.1]{JRV}.
Let $M$ and $N$ be the vector matroids of the following two matrices over $\mathbb{F}_5$, respectively (they can thus be viewed as parity check matrices of two $[7,4,2]_5$-codes):

\[\begin{pmatrix}
    1 &0 &0 &3 &3 &3 &4\\
0 &1 &0 &0 &2 &2 &0\\
0 &0 &1 &4 &4 &4 &4
\end{pmatrix} \quad \text{and} \quad \begin{pmatrix}
    1 &0 &0 &1 &0 &1 &1\\
0 &1 &0 &1 &1 &1 &0\\
0 &0 &1 &0 &1 &0 &-1
\end{pmatrix}.
\]

As shown in \cite[Example 6.1]{JRV}  $M$ and $N$ have the same generalized weight polynomials, but different Betti numbers. The reason is they have different lattices of cycles. For example, $M$ has six circuits of size 4 (\cite[Example 2.3]{JRV}), while $N$ has only 5 circuits of size 4 
.}
\end{example}

The following two results, taken from \cite{JV}, will be useful:

\begin{theorem} \label{prep}
Let $M$ be a matroid on  $E$, and let $\Delta=\mathcal{I}$ be the simplicial complex on $E$ formed by the set of independent sets of $M$. 
\begin{enumerate}
\item[$(a)$]
Let $\sigma \subseteq E$. Then the corresponding $\N^E$-graded Betti numbers of $\Delta$ satisfy:
\[
\beta_{i,\sigma} \neq 0 \Longleftrightarrow \sigma 
\text{ is inclusion-minimal in }N_{i} \quad \text{(for any $i\ge 0$).}
\] 
\item[$(b)$] 
If $i$ is a nonnegative integer and $\sigma \in N_{i}$, then $\beta_{i,\sigma} = (-1)^{r(\sigma)-1}\chi(M_\sigma)$. 
\item[$(c)$] 
If $M$ is the matroid of a parity check matrix of a linear $[n,k]_q$-code $C$, 
then $d_i(C)= \min \{j : \beta_{i,\sigma}\ne 0$ for some $\sigma\subseteq E$ of cardinality $j\}$ for  $i= 1, \dots , k$.
\end{enumerate}
\end{theorem}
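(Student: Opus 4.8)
The plan is to establish Theorem~\ref{prep} by unpacking the combinatorial meaning of the $\N^E$-graded minimal free resolution of $R_\Delta$ over $S$, and this is essentially a translation of Hochster's formula for matroid complexes. For part $(a)$ and $(b)$ together, the starting point is Hochster's formula, which computes $\beta_{i,\sigma}$ as $\dim_{\k} \widetilde{H}^{|\sigma|-i-1}(\Delta|_\sigma; \k)$, where $\Delta|_\sigma$ is the subcomplex of faces contained in $\sigma$. For a matroid $M$ this restriction is the matroid complex of $M_\sigma$, and the key topological input is that matroid complexes are either contractible (when $\sigma \in \mathcal{I}$, i.e., $M_\sigma$ is a cone in a suitable sense, or more precisely when there is a coloop) or homotopy equivalent to a wedge of spheres all of dimension $r(\sigma)-1$. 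Concretely, $\widetilde{H}_j(\Delta|_\sigma;\k)$ vanishes unless $j = r(\sigma)-1$, and in that top degree its dimension equals $(-1)^{r(\sigma)-1}\chi(M_\sigma)$ up to the usual sign bookkeeping with the reduced Euler characteristic. This immediately shows $\beta_{i,\sigma} \ne 0$ forces $|\sigma|-i-1 = r(\sigma)-1$, i.e., $i = |\sigma|-r(\sigma) = n(\sigma)$, so $\sigma \in N_i$; this proves the $\Leftarrow$-implication sharpening in $(a)$ once minimality is addressed, and gives the formula in $(b)$.

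First I would carefully set up Hochster's formula in the $\N^E$-graded (squarefree) setting, noting that only squarefree degrees $\alpha \in \{0,1\}^E$ contribute, as already observed in the text. Then I would invoke the structure theorem for the reduced homology of a matroid complex (due essentially to Folkman, or derivable from shellability of matroid complexes): $\widetilde{H}_j(M_\sigma;\k) = 0$ for $j \ne r(\sigma)-1$ and is free of rank $(-1)^{r(\sigma)-1}\chi(M_\sigma)$ for $j = r(\sigma)-1$, where I interpret $\chi$ as the (non-reduced, per the paper's convention $\sum(-1)^{i+1}f_i$) Euler characteristic. Substituting into Hochster's formula yields $\beta_{i,\sigma} = (-1)^{r(\sigma)-1}\chi(M_\sigma)$ whenever $i = n(\sigma)$ and $0$ otherwise, which is exactly $(b)$ and the degree-constraint half of $(a)$. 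For the minimality statement in $(a)$, I would argue that if $\tau \subsetneq \sigma$ with $\tau \in N_i$ as well, then $M_\sigma$ has $\tau$ as a spanning subset with the same nullity, forcing an element of $\sigma \setminus \tau$ to be a coloop of $M_\sigma$; a coloop makes the matroid complex a cone, hence contractible, hence $\chi(M_\sigma) = 0$ and $\beta_{i,\sigma}=0$. Conversely, if $\sigma$ is minimal in $N_i$, every element of $\sigma$ lies in a circuit contained in $\sigma$, so $M_\sigma$ has no coloop and is a genuine wedge of spheres with nonzero top homology, giving $\beta_{i,\sigma}\ne 0$.

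Part $(c)$ is then nearly formal: combining $(a)$ with the identification $d_i(C) = d_i(M_C) = \min\{|\sigma| : \sigma \in N_i\}$ (from the vector-matroid example) shows that $d_i(C)$ equals the minimal cardinality of a $\sigma$ with $\sigma$ minimal in $N_i$, and by $(a)$ that is the same as the minimal cardinality of $\sigma$ with $\beta_{i,\sigma}\ne 0$. One small point to check is that the minimal-cardinality element of $N_i$ is automatically inclusion-minimal in $N_i$ (true since nullity is monotone non-decreasing under inclusion only up to a point — actually one should note that if $|\sigma|$ is minimal with $n(\sigma)=i$ then no proper subset can have nullity $i$, as that would have smaller cardinality), so the two minima agree.

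The main obstacle is the topological structure theorem for matroid complexes: establishing that $\widetilde{H}_j(M_\sigma;\k)$ is concentrated in degree $r(\sigma)-1$ and computing its rank via the Euler characteristic. In practice, since this theorem is classical (matroid complexes are shellable, Björner; or via Folkman's theorem on the homology of geometric lattices), I would simply cite it and focus the exposition on the bookkeeping: matching the homological degree $|\sigma|-i-1$ against $r(\sigma)-1$, tracking the sign $(-1)^{r(\sigma)-1}$, reconciling reduced versus non-reduced Euler characteristic with the paper's sign conventions, and handling the coloop/cone argument for minimality. The remaining steps are routine, but the sign and degree conventions need to be stated precisely to avoid off-by-one errors.
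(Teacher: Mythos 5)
Your proposal is correct; note, though, that the paper itself gives no proof of Theorem~\ref{prep} — it is quoted from \cite{JV} — and your argument (Hochster's formula plus the fact that the independence complex of a matroid is shellable with reduced homology concentrated in degree $r(\sigma)-1$, vanishing exactly when $M_\sigma$ has a coloop, which happens exactly when $\sigma$ is not inclusion-minimal in $N_{n(\sigma)}$) is essentially the standard proof given there. One cosmetic slip: the paper's $\chi(\Delta)=\sum_{i\ge 0}(-1)^{i+1}f_i$ counts the empty face, so it is the \emph{reduced} Euler characteristic, not the non-reduced one as you label it; since you work with the paper's formula anyway, the sign bookkeeping in $(b)$ comes out right, and your reduction of $(c)$ to $(a)$ via $d_i(C)=d_i(M_C)=\min\{|\sigma|:\sigma\in N_i\}$ (with the observation that a cardinality-minimal element of $N_i$ is inclusion-minimal) is fine.
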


\begin{corollary} \label{betainfo}
Let $M$ be a matroid on  $E$.  
\begin{enumerate}
\item[$(a)$]
Let $\sigma \subseteq E$. Then
$$
\beta_{0,\sigma} = \left\{\begin{array}{ll} 1 & \text{ if } \sigma = \emptyset, \\ 0 & \text{ otherwise }\end{array}\right.
\quad \text{and } \quad
\beta_{1,\sigma} =  \left\{\begin{array}{ll} 1 & \text{ if } \sigma \text{ is a circuit,} \\ 0 & \text{ otherwise.}\end{array}\right.
$$
\item[$(b)$] 
The resolution has length exactly $k=|E|-\rk(M)$, that is: 
$N_k \ne \emptyset$, but $N_i =\emptyset$ for $i >k.$
\end{enumerate}
\end{corollary}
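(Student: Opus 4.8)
The plan is to derive the corollary directly from Theorem~\ref{prep}, which already ties the $\N^E$-graded Betti numbers of a matroid complex to the inclusion-minimal elements of the sets $N_i$ and supplies the value formula $\beta_{i,\sigma}=(-1)^{r(\sigma)-1}\chi(M_\sigma)$ for $\sigma\in N_i$.

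\textbf{Part (a).} First I would dispose of $i=0$: since $N_0=\mathcal{I}$ has $\emptyset$ as its unique inclusion-minimal element, Theorem~\ref{prep}(a) gives $\beta_{0,\sigma}\ne 0$ iff $\sigma=\emptyset$, and the value $\beta_{0,\emptyset}=1$ is immediate because $R_\Delta=S/I_\Delta$ is cyclic, so $S_0=S=S(-\mathbf{0})$ in \eqref{SR}; alternatively one computes $(-1)^{r(\emptyset)-1}\chi(M_\emptyset)=(-1)^{-1}(-1)=1$. For $i=1$, a subset $\sigma$ is inclusion-minimal in $N_1=\{\sigma:n(\sigma)=1\}$ exactly when it is a minimal dependent set, i.e.\ a circuit; hence Theorem~\ref{prep}(a) gives $\beta_{1,\sigma}\ne 0$ iff $\sigma$ is a circuit. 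To see that the value equals $1$, I would use that the monomial ideal $I_\Delta$ has a unique minimal monomial generating set, namely $\{\mathbf{x}^\sigma:\sigma\text{ a minimal non-face of }\Delta\}=\{\mathbf{x}^\sigma:\sigma\text{ a circuit of }M\}$, each a squarefree monomial occurring once; this forces $\beta_{1,\sigma}=1$ for every circuit $\sigma$. As a cross-check, for a circuit $\sigma$ with $|\sigma|=c$ one has $r(\sigma)=c-1$ and $M_\sigma$ is the complex of all proper subsets of $\sigma$, so $\chi(M_\sigma)=\sum_{i=0}^{c-1}(-1)^{i+1}\binom{c}{i}=(-1)^c$ and $\beta_{1,\sigma}=(-1)^{c-2}(-1)^c=1$.

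\textbf{Part (b).} Here I would invoke the bound $0\le n(\sigma)\le |E|-\rk(M)=k$ valid for all $\sigma\subseteq E$, as noted in Definition~\ref{def:nullsetsetc}, together with $n(E)=|E|-r(E)=k$. Thus $N_k\ne\emptyset$ since $E\in N_k$, while $N_i=\emptyset$ for every $i>k$. By Theorem~\ref{prep}(a), the second fact yields $\beta_{i,\sigma}=0$ for all $\sigma$ when $i>k$, so $S_i=0$ for $i>k$ and the resolution \eqref{SR} has length at most $k$; the first fact provides an inclusion-minimal $\sigma\in N_k$ with $\beta_{k,\sigma}\ne 0$, so $S_k\ne 0$ and the length is exactly $k$.

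I do not expect any real obstacle: the corollary is an unwinding of Theorem~\ref{prep}. The only point that needs more than a one-line remark is confirming that the value, and not merely the nonvanishing, of $\beta_{1,\sigma}$ is exactly $1$ for a circuit $\sigma$, and the cleanest way around it is the uniqueness of the minimal monomial generating set of $I_\Delta$, which avoids the binomial-sum evaluation altogether.
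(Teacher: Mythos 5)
Your proof is correct: the identification of the inclusion-minimal elements of $N_0$ and $N_1$ with $\emptyset$ and the circuits, the value computations (via the unique minimal monomial generating set of $I_\Delta$, or equivalently the Euler-characteristic formula of Theorem~\ref{prep}$(b)$), and the combination of $0\le n(\sigma)\le k$ with $n(E)=k$ for part $(b)$ are all valid. The paper states this corollary without proof (importing it from \cite{JV}), and your argument is precisely the intended unwinding of Theorem~\ref{prep}, consistent also with Proposition~\ref{mu}.
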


We also note a useful consequence of the above results for pure resolutions. 

\begin{corollary} \label{pureresol}
Suppose $|E|=n$ and $M$ is a matroid on  $E$ of rank $n-k$. Let $R_\Delta$ be the Stanley-Reisner ring of  the simplicial complex of independent sets in $M$.   
\begin{enumerate}
\item[$(a)$]
A minimal free resolution of $R_\Delta$ is pure of type $(d_0, d_1, \dots , d_k)$ if and only if every minimal element of $N_i$ has cardinality $d_i$, for $i=0,1, \dots , k$. 
\item[$(b)$] 
If a minimal free resolution of $R_\Delta$ is pure of type $(d_0, d_1, \dots , d_k)$, then for $0\le \ell \le k$, every minimal free resolution of the Stanley-Reisner ring of the simplicial complex of independent sets in $M^{(\ell)}$ is pure of type $(0, d_{\ell+1}, \dots, d_k)$.
\end{enumerate}
\end{corollary}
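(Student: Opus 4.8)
The plan is to deduce both parts from Theorem~\ref{prep}(a), which identifies the support of the $\N^E$-graded Betti numbers of a matroid complex with the inclusion-minimal elements of the nullity strata $N_i$, together with the explicit formula for the nullity function $n^{(\ell)}$ of an elongation $M^{(\ell)}$ recalled in this section.

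For $(a)$, I would first note that each $\beta_{i,\alpha}$ is the rank of a free module, hence nonnegative, so there is no cancellation in $\beta_{i,d}=\sum_{|\alpha|=d}\beta_{i,\alpha}$; thus $\beta_{i,d}\ne 0$ if and only if $\beta_{i,\alpha}\ne 0$ for some $\alpha$ with $|\alpha|=d$, which by Theorem~\ref{prep}(a) holds exactly when $N_i$ has an inclusion-minimal element of cardinality $d$. Therefore $\{d:\beta_{i,d}\ne 0\}$ is precisely the set of cardinalities of the minimal elements of $N_i$, and by definition the resolution is pure of type $(d_0,\dots,d_k)$ iff this set is the singleton $\{d_i\}$ for each $i$, i.e.\ iff every minimal element of $N_i$ has cardinality $d_i$. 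One also invokes Corollary~\ref{betainfo}(b) to see that the resolution has length exactly $k=|E|-\rk(M)$ ($N_k\ne\emptyset$ while $N_i=\emptyset$ for $i>k$), and notes that $d_0=0$ because $\emptyset$ is the unique minimal element of $N_0$; this settles $(a)$.

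For $(b)$, I would compare the nullity strata of $M$ and $M^{(\ell)}$. Writing $N_i'$ for the nullity-$i$ stratum of $M^{(\ell)}$ and using $n^{(\ell)}(\sigma)=\max(n(\sigma)-\ell,0)$, one gets $N_i'=N_{i+\ell}$ as families of subsets of $E$ for every $i\ge 1$, while $N_0'=\{\sigma:n(\sigma)\le\ell\}$ has $\emptyset$ as its unique minimal element. Hence the minimal elements of $N_i'$ coincide with those of $N_{i+\ell}$ for $i\ge 1$. If $R_\Delta$ has a pure resolution of type $(d_0,\dots,d_k)$, then by $(a)$ every minimal element of $N_{i+\ell}$ has cardinality $d_{i+\ell}$; since $\rk(M^{(\ell)})=\ell+\rk(M)$ for $\ell\le k$, we have $|E|-\rk(M^{(\ell)})=k-\ell$, so applying $(a)$ to $M^{(\ell)}$ shows its Stanley-Reisner ring has a pure resolution of type $(0,d_{\ell+1},\dots,d_k)$ (degenerating to $(0)$ when $\ell=k$, consistent with $M^{(k)}$ being the trivial matroid). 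As the graded Betti numbers are independent of the chosen minimal free resolution, the conclusion holds for every minimal free resolution of that Stanley-Reisner ring.

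I do not anticipate a substantive obstacle: the statement is essentially a repackaging of Theorem~\ref{prep}(a) and the elongation formula $n^{(\ell)}(\sigma)=\max(n(\sigma)-\ell,0)$. The only points needing care are bookkeeping ones — that ``minimal'' always means inclusion-minimal among subsets of $E$, so it is literally the same relation for $M$ and for $M^{(\ell)}$ on their common strata; the index shift $N_i'=N_{i+\ell}$ valid only for $i\ge 1$, with $i=0$ handled separately; and tracking the length $k-\ell$ of the shorter resolution.
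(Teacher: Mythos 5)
Your proposal is correct and follows essentially the same route as the paper: part $(a)$ is exactly the paper's chain of equivalences via Theorem~\ref{prep}(a) and the sum formula \eqref{eq:VariousBeta} (your nonnegativity/no-cancellation remark just makes that step explicit), and part $(b)$ rests on the same key identification $N_i^{(\ell)}=N_{i+\ell}$ coming from the elongation nullity formula. The extra bookkeeping you supply (the $i=0$ stratum, the length $k-\ell$, the $\ell=k$ degeneration) is consistent with, and slightly more detailed than, the paper's argument.
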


\begin{proof}
    From \eqref{eq:VariousBeta} and part (a) of Theorem~\ref{prep}, we see that
    \begin{eqnarray*}
    & &     \text{a minimal free resolution of $R_\Delta$ is pure of type $(d_0, d_1, \dots , d_k)$} \\
        &\Longleftrightarrow& |\sigma| = d_i \text{ whenever $\sigma \subseteq E$ is such that $\beta_{i, \sigma} \ne 0$ (for any $i\ge 0$)} \\
         &\Longleftrightarrow&  \text{ every minimal element of $N_i$ has cardinality $d_i$ (for any $i\ge 0$)}.
    \end{eqnarray*}
    This proves  $(a)$. Note that if the equivalent conditions in  $(a)$ hold, then $d_0=0$. Now  $(b)$ follows from  $(a)$ by observing that for any $0\le \ell \le k$ and $1\le i \le k- \ell$, the set $N^{(\ell)}_i $ of subsets of $E$ whose nullity with respect to the $\ell$-th elongation matroid $M^{(\ell)}$ is $i$, is  precisely $N_{i+\ell}.$ 
    This observation follows directly from Definition \ref{elong}. 
\end{proof}

We now recall a useful variant of a fundamental result from commutative algebra. 

\begin{proposition} \label{BS-HK}
The $\N$-graded Betti numbers $\beta_{i,j}$ of the Stanley-Reisner ring $R_{\Delta}=S/I_{\Delta}$ of a simplicial complex $\Delta$ associated to a matroid $M$ of rank $n-k$ on a ground set $E$ with $n$ elements   satisfy the Boij-S\"oderberg equations: 
\begin{equation}\label{eq:BSoriginal}
\sum_{i=0}^k\sum_{j=0}^n(-1)^ij^s\beta_{i,j}=0,\quad \text{for } s=0,1,\ldots,k-1.
\end{equation}
In particular, if the minimal free resolutions of $R_\Delta$ are pure of type $(d_0,d_1,\ldots,d_k)$, then \eqref{eq:BSoriginal} implies the Herzog-K\"uhl formula: 
\begin{equation}\label{eq:HKeqn}
\beta_{i,d_i}=\beta_{0,d_0}\left\vert\prod\limits_{\substack{1\le j\le k\\ j \neq i}} \frac{d_j}{(d_j-{d_i})}\right\vert \quad \text{for } i= 1,\ldots,k.
\end{equation}

\end{proposition}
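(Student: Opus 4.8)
The plan is to derive the Boij--Söderberg equations \eqref{eq:BSoriginal} from the exactness of the minimal free resolution \eqref{SR}, and then to extract the Herzog--Kühl formula \eqref{eq:HKeqn} as an algebraic consequence in the pure case. For the first part, I would start from the graded free resolution
\[
0 \longleftarrow R_\Delta \longleftarrow S_0 \longleftarrow S_1 \longleftarrow \cdots \longleftarrow S_k \longleftarrow 0,
\]
where $S_i = \bigoplus_j S(-j)^{\beta_{i,j}}$ (writing $\beta_{i,j}$ for the $\N$-graded Betti numbers, which suffices because $\beta_{i,\alpha}=0$ outside $\{0,1\}^E$, and using Corollary~\ref{betainfo}$(b)$ for the length being exactly $k=n-\rk(M)$). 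Taking the alternating sum of Hilbert series and using that $S(-j)$ has Hilbert series $t^j/(1-t)^n$, one gets
\[
H_{R_\Delta}(t) = \frac{\sum_{i=0}^k (-1)^i \sum_{j=0}^n \beta_{i,j}\, t^j}{(1-t)^n}.
\]
Now the key geometric input is that $R_\Delta$ has Krull dimension equal to $\rk(M) = n-k$ (every facet of the matroid complex is a basis, so the complex is pure of dimension $\rk(M)-1$), which forces the pole of $H_{R_\Delta}(t)$ at $t=1$ to have order at most $n-k$. Hence the numerator polynomial $K(t):=\sum_{i,j}(-1)^i\beta_{i,j}t^j$ must be divisible by $(1-t)^k$, i.e. $K(1) = K'(1) = \cdots = K^{(k-1)}(1) = 0$. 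Rewriting these vanishing conditions in terms of the values $K(1), (tK'(t))|_{t=1}$, etc., and using that the transition between derivatives at $1$ and the power sums $\sum_{i,j}(-1)^i j^s \beta_{i,j}$ is given by an invertible (Stirling-number) change of basis, yields exactly \eqref{eq:BSoriginal} for $s=0,1,\dots,k-1$. I would present this as: "$(1-t)^k \mid K(t)$ is equivalent to the system \eqref{eq:BSoriginal}."

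For the second part, assume the resolution is pure of type $(d_0,d_1,\dots,d_k)$ with $d_0=0$; then for each $i$ there is a single value $j=d_i$ with $\beta_{i,d_i}\ne 0$, the $d_i$ are strictly increasing, and \eqref{eq:BSoriginal} becomes the linear system
\[
\sum_{i=0}^k (-1)^i d_i^{\,s}\, \beta_{i,d_i} = 0, \qquad s = 0,1,\dots,k-1,
\]
in the $k+1$ unknowns $(-1)^i\beta_{i,d_i}$. This is a Vandermonde-type system in the distinct nodes $d_0,\dots,d_k$; solving it (e.g. by Cramer's rule, or by recognizing the solution space as one-dimensional and spanned by the vector of signed Lagrange-interpolation coefficients) gives $(-1)^i\beta_{i,d_i} = c \prod_{j\ne i} \frac{1}{d_i - d_j}$ for a common constant $c$, and pinning down $c$ from the $i=0$ entry (with $d_0=0$, $\beta_{0,0}=\beta_{0,d_0}$) produces
\[
\beta_{i,d_i} = \beta_{0,d_0} \left\vert \prod_{\substack{1\le j\le k\\ j\ne i}} \frac{d_j}{d_j - d_i}\right\vert,
\]
which is \eqref{eq:HKeqn}; the absolute value simply absorbs the sign $(-1)^i$ and the signs coming from the ordering of the $d_j$. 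Since the paper only needs the statement and not a from-scratch development, I might alternatively just cite the standard Herzog--Kühl theorem and \cite{JV} for the matroid-specific Boij--Söderberg equations, but I would include the Hilbert-series argument above because it is short and self-contained.

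The main obstacle is the bookkeeping in translating "$(1-t)^k$ divides $K(t)$" into the precise power-sum form \eqref{eq:BSoriginal}: one must be careful that the correct number of equations ($s=0,\dots,k-1$, i.e. $k$ equations, matching a pole-order drop from $n$ to $n-k$) comes out, and that the change of basis from $\{K^{(s)}(1)\}$ to $\{\sum (-1)^i j^s \beta_{i,j}\}$ is genuinely invertible over $\mathbb{Q}$ so that the two systems are equivalent rather than one merely implying the other. A secondary subtlety is justifying $\dim R_\Delta = n-k$ for a matroid complex — this rests on the pure-shellability (or at least purity) of matroid complexes, which I would invoke as a standard fact; alternatively one can bypass it by citing that \eqref{eq:BSoriginal} for matroid Stanley--Reisner rings is established in \cite{JV}. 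Everything after the Boij--Söderberg equations (the passage to Herzog--Kühl) is routine linear algebra over $\mathbb{Q}$ with distinct nodes and carries no real difficulty.
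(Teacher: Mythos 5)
Your proposal is correct, but it proves the proposition by a genuinely different (more self-contained) route than the paper. The paper's proof is essentially a two-line citation argument: by \cite{BS}, the equations \eqref{eq:BSoriginal} hold if and only if $R_\Delta$ is Cohen--Macaulay; matroid complexes are shellable, hence $R_\Delta$ is Cohen--Macaulay; and \eqref{eq:HKeqn} then follows by the linear algebra of \cite{HK}. You instead derive \eqref{eq:BSoriginal} directly from the Hilbert series: exactness of \eqref{SR} gives $H_{R_\Delta}(t)=K(t)/(1-t)^n$ with $K(t)=\sum_{i,j}(-1)^i\beta_{i,j}t^j$, the Krull dimension of $R_\Delta$ equals $\rk(M)=n-k$ (facets of the matroid complex are bases), so $(1-t)^k$ divides $K(t)$, and the vanishing of $K^{(s)}(1)$ for $s<k$ is equivalent to \eqref{eq:BSoriginal} by the invertible falling-factorial/power change of basis; you then solve the resulting Vandermonde-type system explicitly (one-dimensional solution space, Lagrange/divided-difference coefficients, $d_0=0$, absolute values absorbing signs) to get \eqref{eq:HKeqn}, rather than citing \cite{HK}. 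Your argument is sound: the one place where Cohen--Macaulayness is silently used is in writing the sum over $i\le k$, i.e.\ in knowing that all Betti numbers live in homological degrees at most $k$, and you correctly supply this from Corollary~\ref{betainfo}$(b)$ (equivalently, from shellability via Auslander--Buchsbaum). What your approach buys is transparency and self-containedness --- it isolates exactly which inputs (dimension of the matroid complex and the length bound on the resolution) produce the equations --- at the cost of more bookkeeping; the paper's approach buys brevity by outsourcing both the equivalence with Cohen--Macaulayness and the pure-resolution linear algebra to the literature. Either version is acceptable.
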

\begin{proof} From \cite{BS}, it follows that the equations~\eqref{eq:BSoriginal} hold if and only if 
$R_\Delta$ 
is Cohen-Macaulay. But 
it is well known that a matroid complex 
is shellable, and therefore the corresponding Stanley-Reisner ring $R_\Delta$ 
is Cohen-Macaulay. The formula in \eqref{eq:HKeqn} follows from \eqref{eq:BSoriginal} using elementary linear algebra (see, e.g., \cite{HK}). 
\end{proof}

Here is a coarser form of $\N$-graded Betti numbers that will be  useful for us. 

\begin{definition} \label{phis}
{\rm 
Let $M, \Delta$ and $\beta_{i,j}$ as in Proposition~\ref{BS-HK}.  Define $\phi_j = \phi_j(\Delta)$ by 
$$
\phi_j=\displaystyle\sum_{i=0}^k(-1)^i\beta_{i,j} \quad \text{for } j=0, 1, \dots, n. 
$$
Further, for $0\le \ell \le k$, we denote the simplicial complex associated to the $\ell$-th elongation $M^{(\ell)}$ of $M$ by $\Delta_\ell$, and its $\N$-graded Betti numbers by $\beta_{i,j}^{(\ell)}$. Also, define 
$$
\phi_j^{(\ell)} = \displaystyle\sum_{i=0}^{k-\ell}(-1)^i\beta_{i,j}^{(\ell)} \quad \text{for } j=0, 1, \dots, n. 
$$

 }
\end{definition}

We get a neater version of the equations in Proposition \ref{BS-HK} if we express them with the $\phi_j$. 
In other words, equations \eqref{eq:BSoriginal} can be rewritten as follows. 

\begin{corollary} \label{HK-BS}
If $\Delta$ is as in Proposition~\ref{BS-HK} and $\phi_j = \phi_j(\Delta)$ for $0\le j \le n$, then 
\begin{equation}\label{eq:BS-phi}
\sum_{j=0}^nj^s\phi_{j}=0 \quad \text{for } s=0,1,\ldots,k-1.
\end{equation}
\end{corollary}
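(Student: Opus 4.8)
The plan is to derive Corollary~\ref{HK-BS} directly from Proposition~\ref{BS-HK} by substituting the definition of the $\phi_j$ and reorganizing the double sum. The statement to prove is that, with $\phi_j = \sum_{i=0}^k (-1)^i \beta_{i,j}$, the Boij--S\"oderberg equations \eqref{eq:BSoriginal} take the compact form $\sum_{j=0}^n j^s \phi_j = 0$ for $s = 0, 1, \ldots, k-1$. This is essentially a bookkeeping step: the only content is interchanging the order of summation, so the proof will be short.

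First I would start from \eqref{eq:BSoriginal}, namely $\sum_{i=0}^k \sum_{j=0}^n (-1)^i j^s \beta_{i,j} = 0$ for each $s$ in the stated range. Since all sums are finite, I can freely swap the order of summation to get $\sum_{j=0}^n j^s \left( \sum_{i=0}^k (-1)^i \beta_{i,j} \right) = 0$. Then I would simply recognize the inner sum as $\phi_j$ by Definition~\ref{phis}, which yields $\sum_{j=0}^n j^s \phi_j = 0$, exactly the assertion of the corollary. Conversely, the same manipulation run backwards shows \eqref{eq:BS-phi} implies \eqref{eq:BSoriginal}, so the two systems of equations are genuinely equivalent, matching the sentence preceding the corollary (``equations \eqref{eq:BSoriginal} can be rewritten as follows'').

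There is essentially no obstacle here; the step that requires the slightest care is the convention at $s=0$, where the factor $j^s$ should be read as $1$ for all $j$ including $j=0$ (so that the $s=0$ equation reads $\sum_j \phi_j = 0$), and noting that the term $j=0$ contributes nothing when $s \ge 1$. One should also observe that the hypothesis that $\Delta$ comes from a matroid is used only insofar as it guarantees \eqref{eq:BSoriginal} holds (via Cohen--Macaulayness, already established in Proposition~\ref{BS-HK}); no further matroid-theoretic input is needed for the rewriting itself. Thus the proof is a one-line application of Fubini for finite sums together with the definition of $\phi_j$.

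Proof sketch to be inserted: Fix $s \in \{0, 1, \ldots, k-1\}$. By Proposition~\ref{BS-HK}, $\sum_{i=0}^k \sum_{j=0}^n (-1)^i j^s \beta_{i,j} = 0$. Interchanging the (finite) sums gives $\sum_{j=0}^n j^s \sum_{i=0}^k (-1)^i \beta_{i,j} = 0$, and by Definition~\ref{phis} the inner sum is $\phi_j$, so $\sum_{j=0}^n j^s \phi_j = 0$. This is \eqref{eq:BS-phi}. $\qed$
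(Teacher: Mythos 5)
Your proposal is correct and coincides with the paper's (implicit) argument: the corollary is simply the Boij--S\"oderberg equations \eqref{eq:BSoriginal} rewritten by interchanging the finite double sum and recognizing the inner alternating sum as $\phi_j$ via Definition~\ref{phis}. Nothing more is needed, and your remarks about the $s=0$ convention and the equivalence of the two systems are consistent with the paper's presentation.
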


We may refer to the equations \eqref{eq:BSoriginal} as well as \eqref{eq:BS-phi} as the \emph{Boij-S\"oderberg equations}. It may be noted that in these equations, we adopt the convention that $0^0=1$. 

\begin{remark} \label{newphi}
{\rm 
The $\phi_j$, as opposed to the $\beta_{i,j}$ in general, are determined by the Hilbert series of $R_{\Delta}$. See \cite[Page 119]{E} or \cite[Proposition 6]{R}. 
To obtain the generalized weight polynomials and the higher weight spectra of the code $C_q$, we only need to know the $\phi_j$, and not necessarily the $\beta_{i,j}$, for $M_q$ and its elongations. 
This can be an advantage sometimes, not least 
because  \eqref{eq:BS-phi} gives us
linearly independent equations (regarding $\phi_j$ as variables), which is not necessarily the case with the equations  \eqref{eq:BSoriginal} (where one regards $\beta_{i,j}$ as variables). }
\end{remark}


Here are some more basic notions related to matroids and a useful relation with Betti numbers. 

\begin{definition}\label{recur}
{\rm 
Let $M$ be a matroid on  $E$. 
\begin{itemize}
    \item 
    A \emph{cycle} of $M$ is a subset of $E$ which is inclusion minimal in $N_i$ for some $i \ge 0$. The set of all cycles of $M$ forms a lattice (with respect to the inclusion relation) and we denote it by $L_M$.
    \item 
    The \emph{M\"obius function} of $M$, denoted $\mu_M^{ }$, is an integer valued function on $L_M$, which is defined recursively as follows. 
    $$
    \mu_M^{ }(\emptyset)=1 \quad \text{and for $\sigma \in L_M$ with $\sigma \ne \emptyset$,} \quad 
    \mu_M^{ } (\sigma) = - \sum_{\tau} \mu_M^{ } (\tau),
    $$
    where the sum is taken over all $\tau\in L_M$ such that $\tau \subsetneq \sigma$. 
\end{itemize}
}
\end{definition}


We remark that the lattice $L_M$ of cycles in a matroid $M$ is opposite of the lattice of flats of the dual matroid $M^*$. The following relation with Betti numbers is well known. See, for example, 
\cite{Stanley1977} or \cite{JPR}.

\begin{proposition} \label{mu}
Let $M$ be a matroid on  $E$. Then for $0\le i \le |E|-\rk(M)$, 
$$
\beta_{i,\sigma}=|\mu_M^{ }(\sigma)| \quad \text{for any inclusion minimal element $\sigma$ of $N_i$}.
$$
In particular, $\mu_M^{ }(\sigma)=-1$ and $\beta_{1,\sigma}=1$ for every circuit $\sigma$ in $M$.  
\end{proposition}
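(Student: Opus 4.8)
The plan is to reduce the statement to the classical formula of Rota relating the Möbius function of a geometric lattice to reduced Euler characteristics, and then to combine it with part~$(b)$ of Theorem~\ref{prep}. First I would recall that, by Theorem~\ref{prep}$(a)$, if $\sigma$ is an inclusion-minimal element of $N_i$, then $\beta_{i,\sigma}\ne 0$ and, by Theorem~\ref{prep}$(b)$, $\beta_{i,\sigma}=(-1)^{r(\sigma)-1}\chi(M_\sigma)$, where $M_\sigma$ is the restriction of $M$ to $\sigma$ and $\chi(M_\sigma)$ is the Euler characteristic of its matroid complex. So the task is to identify $(-1)^{r(\sigma)-1}\chi(M_\sigma)$ with $|\mu(\sigma)|$, where $\mu=\mu_M$ is the Möbius function of the lattice $L_M$ of cycles.

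The key step is the following chain of identifications. Using the alternative expression $\chi(M_\sigma)=\sum_{j\ge 0}(-1)^j D_j$ for the Euler characteristic in terms of the numbers $D_j$ of dependent subsets of $\sigma$ of cardinality $j$ (valid since $\sigma\ne\emptyset$ when $i\ge 1$; the case $i=0$, $\sigma=\emptyset$ is trivial since $\beta_{0,\emptyset}=\mu(\emptyset)=1$), I would rewrite $\chi(M_\sigma)$ as a sum over the dependent subsets $\tau\subseteq\sigma$ of $(-1)^{|\tau|}$. Then I would invoke the standard order-complex / Crosscut-type argument: the lattice interval $[\emptyset,\sigma]$ in $L_M$ has a Möbius value that, by Rota's crosscut theorem applied to the crosscut consisting of the circuits contained in $\sigma$, equals an alternating sum over subsets of circuits whose union is $\sigma$. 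Equivalently, and more directly for our purposes, since $\sigma$ is a cycle (inclusion-minimal in $N_i$), the restriction $M_\sigma$ has the property that its only spanning subset which is a union of circuits is $\sigma$ itself, and one shows $\mu(\sigma)=(-1)^{r(\sigma)}\widetilde{\chi}(\Delta_\sigma)$ up to the usual sign conventions, where $\widetilde\chi$ is the reduced Euler characteristic of the independence complex. Matching the two sign normalizations gives $\beta_{i,\sigma}=(-1)^{r(\sigma)-1}\chi(M_\sigma)=|\mu(\sigma)|$; positivity of $\beta_{i,\sigma}$ then forces the absolute value, which is consistent with the known alternating-sign behaviour of $\mu$ on a geometric lattice.

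For the final sentence of the proposition, I would specialize: if $\sigma$ is a circuit, then $N_1$ has $\sigma$ as an inclusion-minimal element (nullity one), $M_\sigma$ is the uniform matroid $U(|\sigma|-1,|\sigma|)$, whose independence complex is the boundary of a simplex and hence has reduced Euler characteristic $\pm 1$; this yields $\mu(\sigma)=-1$ and $\beta_{1,\sigma}=1$, recovering Corollary~\ref{betainfo}$(a)$. I expect the main obstacle to be purely bookkeeping: pinning down the three competing sign conventions — the one in Theorem~\ref{prep}$(b)$ with the factor $(-1)^{r(\sigma)-1}$, the convention for $\chi$ versus the reduced Euler characteristic $\widetilde\chi$, and the recursive sign in the definition of $\mu$ in Definition~\ref{recur} — and checking that they align so that the unsigned equality $\beta_{i,\sigma}=|\mu(\sigma)|$ comes out correctly. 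Since the result is classical (references \cite{Stanley1977}, \cite{JPR}), I would keep this verification brief and cite those sources for the crosscut/Möbius–Euler identity rather than reproving it.
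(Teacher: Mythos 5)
The paper does not actually prove Proposition~\ref{mu}: it is stated as a well-known fact, with the proof delegated to the references \cite{Stanley1977} and \cite{JPR}. Your plan is therefore consistent with the paper's treatment, and its skeleton is sound: reduce via Theorem~\ref{prep}$(b)$ to identifying $(-1)^{r(\sigma)-1}\chi(M_\sigma)$ with $|\mu(\sigma)|$, handle the sign bookkeeping (the paper's $\chi$ is the reduced Euler characteristic, so only absolute values need to match, and nonnegativity of $\beta_{i,\sigma}$ does the rest), check the circuit case directly, and cite the classical M\"obius--Euler/crosscut identity for the general case; your computation for a circuit ($M_\sigma$ the boundary of a simplex, $\mu(\sigma)=-1$, $\beta_{1,\sigma}=1$) is correct.

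One caveat: the auxiliary claim that for a cycle $\sigma$ ``the only spanning subset of $M_\sigma$ which is a union of circuits is $\sigma$ itself'' is false in general. For instance, in $U(2,4)$ the whole ground set is a cycle of nullity $2$ (it is inclusion-minimal in $N_2$), yet every $3$-element circuit is a spanning union of circuits properly contained in it. Fortunately this claim is not needed: in the crosscut computation on the interval $[\emptyset,\sigma]$ of $L_M$ the join of a family of circuits is simply their union (a union of circuits has no coloops, hence is itself a cycle), and the classical identification of $\mu(\emptyset,\sigma)$ with the top reduced homology of the independence complex of $M_\sigma$ — which is what \cite{Stanley1977} and \cite{JPR} supply — goes through without it. So drop that sentence and lean on the cited sources, exactly as the paper does.
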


With the technical tools above, we are well equipped to take up the first part of main step 1, sketched in the beginning of Section~\ref{sec3}. 
This will be accomplished in the next three sections. 
\section{Identifying the Minimal Sets of a Given Nullity}\label{sec5}

In this section, we consider the parity check matroid 
$M_q=(E_q, \mathcal{I}_q)$ of  the Reed-Muller code $C_q = \RM_q(2,2)$, where $E_q=\{1, \dots , q^2\}$. We may denote $E_q$ simply by $E$ in this section, and we may tacitly identify $E$ with $\mathbb{A}_q^2$. We will denote by $r$ and $n$ the rank function and the nullity function of $M_q$, respectively. The sets $N_i$, as  in Definition \ref{def:nullsetsetc} will be the ones corresponding to the matroid $M_q$. Our aim is to find an  $\mathbb{N}$-graded minimal free resolution of the Stanley-Reisner ring associated to $\mathcal{I}_q$. To this end, it is clear from Theorem \ref{prep} that it would be useful to find the inclusion-minimal sets of $N_i$, for $i=1,\dots,6$,  since the local contributions $\beta_{i,\sigma}$ to the Betti numbers $\beta_{i,j}$ come from inclusion-minimal members of $N_i$, of cardinality $j$. 
For simplicity, we may drop the prefix ``inclusion" and simply write \emph{minimal} to mean inclusion-minimal. 
A crucial result for identifying members of $N_i$ in general, and the minimal ones in particular, is the following.


\begin{lemma} \label{help}
Assume that $q\ge 3$. 
Then the nullity of any $\sigma \subseteq E$  is equal to the $\Fq$-vector space dimension of the space of affine conics in $\mathbb{A}_q^2$ passing through $E\setminus \sigma$. More precisely, 
$$
n(\sigma) = \dim_{\Fq} \left\{f \in \Fq[x,y]_{\le 2} : f(P) = 0 \text{ for all } P\in E\setminus \sigma\right\},
$$
where $\Fq[x,y]_{\le 2}$ denotes the space of polynomials in two variables of degree at most 2 with coefficients in $\Fq$. 
\end{lemma}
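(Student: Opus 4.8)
The plan is to identify the code $C_q$ with an evaluation map and then read off the nullity from the standard connection between the parity check matroid and codewords. Recall that $C_q = \RM_q(2,2)$ is the image of the evaluation map $\mathrm{ev}\colon \Fq[x,y]_{\le 2} \to \Fq^{E}$ sending $f$ to $(f(P))_{P\in E}$, where $E$ is identified with $\mathbb{A}_q^2$. Since $q\ge 3$, this evaluation map is injective, so $\dim C_q = \dim_{\Fq}\Fq[x,y]_{\le 2} = 6$, and $C_q$ is a $[q^2,6]$-code. The parity check matroid $M_q$ of $C_q$ is the vector matroid of a parity check matrix $H$ of $C_q$; equivalently, $M_q$ is the dual of the vector matroid of the generator matrix $G_q$ whose columns are indexed by the points of $E$ and whose rows correspond to evaluations of a basis of $\Fq[x,y]_{\le 2}$.

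First I would recall the general fact that for the parity check matroid $M_C$ of a linear code $C\subseteq \Fq^n$ with generator matrix $G$, the rank function satisfies $r(\sigma) = |\sigma| - \dim\{c \in C : \Supp(c) \subseteq \sigma\}$ for $\sigma\subseteq E$; equivalently, the nullity $n(\sigma) = |\sigma| - r(\sigma)$ equals $\dim\{c\in C : \Supp(c)\subseteq \sigma\}$, the dimension of the subcode of $C$ supported inside $\sigma$. This is because $r(\sigma) = r^*(E) - $ (something) — more directly, the columns of the generator matrix $G$ indexed by $E\setminus\sigma$ span a space whose dimension is $k$ minus the dimension of the space of codewords vanishing off $\sigma$, and the matroid rank of $\sigma$ in $M_C$ (a parity check matroid) is computed via $r(\sigma) = |\sigma| - \dim C + \mathrm{rank}(G_{E\setminus\sigma})$ where $G_{E\setminus\sigma}$ is the submatrix of columns outside $\sigma$. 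Plugging in gives $n(\sigma) = \dim C - \mathrm{rank}(G_{E\setminus\sigma}) = \dim\{c\in C : c_i = 0 \text{ for all } i\notin\sigma\}$.

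Next I would translate the right-hand side through the evaluation isomorphism. A codeword $c = \mathrm{ev}(f)$ has $\Supp(c)\subseteq\sigma$ precisely when $f(P) = 0$ for every $P\in E\setminus\sigma$. Because $\mathrm{ev}$ is an $\Fq$-linear isomorphism from $\Fq[x,y]_{\le 2}$ onto $C_q$, the subspace $\{c\in C_q : \Supp(c)\subseteq\sigma\}$ corresponds bijectively and linearly to $\{f\in\Fq[x,y]_{\le 2} : f(P)=0 \text{ for all }P\in E\setminus\sigma\}$, so the two have the same dimension. Combining with the previous paragraph yields
\[
n(\sigma) = \dim_{\Fq}\{f\in\Fq[x,y]_{\le 2} : f(P)=0 \text{ for all }P\in E\setminus\sigma\},
\]
which is exactly the claimed formula (the elements of $\Fq[x,y]_{\le 2}$ being the affine conics, in the loose sense including degenerate and lower-degree ones, passing through $E\setminus\sigma$).

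The main obstacle — really the only point requiring care — is pinning down precisely the matroid-theoretic identity $n_{M_C}(\sigma) = \dim\{c\in C : \Supp(c)\subseteq\sigma\}$ for the parity check matroid, and in particular getting the roles of $C$ versus $C^\perp$ and of $\sigma$ versus its complement straight. One clean way is to argue via duality: the dual $M_C^*$ is the vector matroid of the generator matrix $G_q$, whose rank function $r^*$ satisfies $r^*(E\setminus\sigma) = \mathrm{rank}(G_q \text{ restricted to columns } E\setminus\sigma) = k - \dim\{f : f|_{E\setminus\sigma} \equiv 0\}$, and then apply the standard relation \eqref{eq:rrstar} between $r$ and $r^*$, namely $r^*(\tau) = r(E\setminus\tau) + |\tau| - r(E)$ with $\tau = E\setminus\sigma$, together with $r(E) = \rk(M_q) = q^2 - 6$. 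A short computation then gives $n(\sigma) = |\sigma| - r(\sigma) = \dim\{f\in\Fq[x,y]_{\le 2} : f|_{E\setminus\sigma}\equiv 0\}$, as desired. The hypothesis $q\ge 3$ enters only to guarantee injectivity of $\mathrm{ev}$ (so that $\dim C_q = 6$ and the correspondence between conics and codewords is a genuine isomorphism); for $q=2$ the code is defined differently and the statement is not claimed.
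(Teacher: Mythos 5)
Your proposal is correct and follows essentially the same route as the paper: both pass to the dual (generator-matrix) matroid via the relation $r^*(\sigma) = r(E\setminus\sigma)+|\sigma|-r(E)$, identify $n(\sigma)$ with the dimension of the subcode supported in $\sigma$ by a rank--nullity argument on the columns indexed by $E\setminus\sigma$, and then transfer this to polynomials through the evaluation isomorphism, which is injective precisely because $q\ge 3$. The only cosmetic difference is that the paper phrases the rank--nullity step as computing the kernel of the projection $c\mapsto c|_{E\setminus\sigma}$, which is the same computation you carry out with the submatrix $G_{E\setminus\sigma}$.
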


\begin{proof}
The dual matroid of $M_q$ is the vector matroid corresponding to a generator matrix, say $G_q$ of $C_q$. 
Let $r^*$ denotes the rank function of this dual matroid and let $\sigma \subseteq E$. Then in view of \eqref{eq:rrstar}, we see that  
$$
n(\sigma) = |\sigma| - r(\sigma) = r^*(E) - r^*(E\setminus \sigma) = \mathrm{rank}(G_q) - \dim_{\Fq} \langle G_q^j : j \in E\setminus \sigma\rangle,
$$
where $G_q^j$ denotes the $j$-th column of $G_q$. Since $q\ge 3$, we must have  
$$
\mathrm{rank}(G_q) = \dim C_q = 6 = \dim_{\Fq} \Fq[x,y]_{\le 2}. 
$$
Also, the space $\langle G_q^j : j \in E\setminus \sigma\rangle$ spanned by the columns of $G_q$ indexed by the elements of $E\setminus \sigma$ can be viewed as the image of the projection map, say $\pi$,  that sends a codeword $(c_j)_{j\in E}$ of $C_q$ to $(c_j)_{j\in E\setminus \sigma}$. Hence the Rank-Nullity Theorem  shows that $n(\sigma)$ is the dimension of the kernel of $\pi$. 
Since $q\ge 3$, each codeword $c$ of $C_q$ corresponds to a unique $f\in \Fq[x,y]_{\le 2}$ with $c=(f(P))_{P\in E}$. So the kernel of $\pi$ can be  identified with the space 
$\left\{f \in \Fq[x,y]_{\le 2} : f(P) = 0 \text{ for all } P\in E\setminus \sigma\right\}$. This proves the lemma. 
%
\end{proof}

We are now ready to give our important theorem which categorizes and counts the minimal elements of $N_i$ for all $1\le i\le 6$. 
\begin{theorem} \label{nullity} 
The matroid $M_q$ satisfies: 
\begin{itemize}
\item[$(a)$] For $q \ge 7$, the minimal elements of $N_1$ are the complements of those listed in Proposition~\ref{minwords}.  For $2 \le q \le 5,$  the minimal subsets of $N_1$ are the complements of those listed in Corollary~\ref{cor:smallq}. For all $q \ge 2$, these sets are described in detail in Proposition~\ref{conicvariations}.

\item[$(b)$] Assume that $q \ge 3$. Then we have the following. 
\begin{itemize}
    \item 
The minimal elements of $N_2$ are all the $(q^2+q)(q^2-q)$ complements of  
$\{ \text{all }q \text{ points on a line}\} \cup \{\text{an additional point}\}$,  and all the ${q^2(q^2-1)(q^2-q)(q^2-3q+3)}/{24}$ complements of  ``qudrilaterals'' (four points such that no three of them are collinear).
\item
The minimal elements of $N_3$ are all the ${q^2+q}$ complements of $q$ points on a line, and all 
the ${q^2(q^2-1)(q^2-q)}/{6}$ complements of ``triangles'' (three non-collinear points).
\item
The minimal elements of $N_4$ are all the ${q^2 \choose 2}$ complements of point pairs.
\item
The minimal elements of $N_5$ are all the $q^2$ complements of a single point.
\item 
The only (minimal) element of $N_6$ is $E$.
\end{itemize}
\item[$(c)$] For $q=2$, the inclusion minimal elements of $N_i$, for $i=0,1,\ldots,4$ are all the subsets of $(\F_2)^2$ of cardinality $i$.
\end{itemize}
\end{theorem}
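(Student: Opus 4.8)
Throughout, fix $q\ge 3$ and for $\sigma\subseteq E$ write $T=E\setminus\sigma$ for the complement; by Lemma~\ref{help} we have $n(\sigma)=6-c(T)$, where $c(T)$ denotes the number of independent linear conditions that evaluation at the points of $T$ imposes on the $6$-dimensional space $\Fq[x,y]_{\le 2}$. Since $c(T)$ is non-decreasing in $T$, it follows that $\sigma\in N_i$ iff $c(T)=6-i$, and $\sigma$ is inclusion-minimal in $N_i$ iff $T$ is inclusion-maximal among subsets of $\A^2_q$ with $c(T)=6-i$, equivalently, iff $T$ equals the common zero set in $\A^2_q$ of the linear system of conics vanishing on $T$. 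Thus the theorem is the classification, by codimension of the system, of base loci of linear systems of conics in $\A^2_q$. For codimension $5$ these base loci are precisely the inclusion-maximal affine zero sets of conics, which are catalogued in Proposition~\ref{minwords} (for $q\ge 7$), Corollary~\ref{smallq} (for $q=4,5$) and Corollary~\ref{corconic} (together with Proposition~\ref{conicvariations}); this gives part $(a)$.

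The extreme nullities are immediate. As $c(T)=0$ forces $T=\emptyset$, we get $N_6=\{E\}$. A single point imposes exactly one condition (it is nonzero on the constant $1$), while two distinct points impose two (a squared line through one that avoids the other), so the maximal $T$ with $c(T)=1$ are single points, giving the $q^2$ minimal elements of $N_5$. For $N_4$ the key sub-fact is that \emph{any} three points of $\A^2_q$ impose exactly three conditions: if they are collinear on a line $\ell$, then by B\'ezout a conic meeting $\ell$ in three points contains $\ell$, so the conics through them are exactly $\ell\cdot\Fq[x,y]_{\le 1}$, which is $3$-dimensional; if they are not collinear, products of two lines realize the three evaluation functionals independently. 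Hence no $T$ with $|T|\ge 3$ has $c(T)=2$, and the maximal such $T$ are point pairs, $\binom{q^2}{2}$ of them.

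The substance lies in $N_3$ and $N_2$. For $N_3$ ($c(T)=3$, maximal): four points in general position impose four conditions (five such points determine a unique conic), so any $T$ with $c(T)=3$ has the property that among any four of its points some three are collinear. I would then prove the near-pencil lemma: a subset of $\A^2_q$ in which every four points contain a collinear triple is contained either in a line or in $(\text{a line})\cup\{\text{one extra point}\}$ — the affine Sylvester/de~Bruijn--Erd\H{o}s argument (take a line $\ell$ carrying a maximal number of points of $T$; if two points lay off $\ell$, intersecting $\ell$ with the line through them yields a contradiction). Combining: if $T\subseteq\ell$ then $c(T)=3$ iff $|T|\ge 3$, so maximality forces $T=\ell$ ($q$ points); if $T$ is not contained in a line then $T\subseteq\ell\cup\{P\}$ with $P\notin\ell$, and since $\{\text{three points of }\ell\}\cup\{P\}$ already imposes four conditions we get $|T\cap\ell|\le 2$, so $T$ is a triangle. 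Both types are genuinely maximal (adding $P\notin\ell$ to $\ell$ cuts the system down to $\{\ell\cdot m: m(P)=0\}$, of dimension $2$; and the base locus of the system of conics through a triangle is the triangle itself, as seen from its three ``pair of sides'' conics), and the counts $q^2+q$ and $\binom{q^2}{3}-(q^2+q)\binom{q}{3}=q^2(q^2-1)(q^2-q)/6$ follow by elementary enumeration. For $N_2$ ($c(T)=4$, maximal; pencils of conics) I would split on $|T|$. If $|T|\le 4$ then $|T|=4$, and $T$ is either a quadrilateral (no three collinear — genuinely maximal, since the only base points of the resulting pencil are its four points) or ``three collinear plus a point'', which extends to $\ell\cup\{P\}$ and so is not maximal. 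If $|T|\ge 5$, a $2$-dimensional space of conics with no common linear factor has two members sharing no component, hence a base locus of at most $4$ points by B\'ezout — impossible; so the system has a common linear factor $\ell$, every conic through $T$ is $\ell\cdot(\text{line})$, and a short argument (when $|T\cap\ell|\le 2$ one produces a conic through $T$ not divisible by $\ell$, or the line factor is over-determined) forces $|T\cap\ell|\ge 3$ and $T\setminus\ell$ a single point, so the maximal such $T$ is $\ell\cup\{P\}$. This gives the two families of minimal elements of $N_2$, with counts $(q^2+q)(q^2-q)$ and $q^2(q^2-1)(q^2-q)(q^2-3q+3)/24$ (the latter being $\binom{q^2}{4}$ minus the $4$-subsets that contain a collinear triple).

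For part $(c)$, note that $C_2=\F_2^4$, so its parity check matroid $M_2$ has $\rk(M_2)=n-k=0$; hence $n(\sigma)=|\sigma|$ for all $\sigma$, $N_i=\{\sigma: |\sigma|=i\}$, and every element of $N_i$ is (trivially) inclusion-minimal, which is the stated description on $(\F_2)^2$. I expect the main obstacle to be the $N_3$ and $N_2$ analyses: proving the near-pencil lemma cleanly over an arbitrary $\Fq$ (including the small fields $q=3,4,5$, where there is little room), and ruling out, in the $|T|\ge 5$ subcase of $N_2$, every base-locus configuration other than $\ell\cup\{P\}$; the extreme nullities, the ``three points impose three conditions'' fact, and all the counting are routine.
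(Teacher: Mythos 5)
Your strategy is sound and, in substance, rests on the same ingredients as the paper's proof (Lemma~\ref{help}, B\'ezout, and the independence of conditions imposed by small point configurations), but you organize it genuinely differently: you recast inclusion-minimality in $N_i$ as inclusion-maximality of base loci of linear systems of affine conics, prove a de Bruijn--Erd\H{o}s-type ``near-pencil'' lemma to settle $N_3$, and analyze pencils (common linear factor versus two members with no common component) to settle $N_2$, whereas the paper sweeps over the cardinality of $E\setminus\sigma$ from large to small with ad hoc B\'ezout/uniqueness arguments and then treats $q=3$ in a separate paragraph. Your reformulation, the $N_4$--$N_6$ counts, the maximality verifications (pair-of-sides conics for triangles, the two ``diagonal'' conics for quadrilaterals, $\{\ell m:m(P)=0\}$ for line-plus-point), and parts $(a)$ and $(c)$ all check out, and the enumeration formulas agree with the theorem.

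There is, however, one concrete misstep: in the $N_2$ analysis you dispose of the case $|T|=4$ with a collinear triple by saying it ``extends to $\ell\cup\{P\}$ and so is not maximal.'' This fails exactly at $q=3$, where a line of $\A^2_q$ has only three points, so ``three collinear plus a point'' already \emph{is} $\ell\cup\{P\}$; it does not extend, it is maximal (the system is $\{\ell m: m(P)=0\}$ with base locus $\ell\cup\{P\}$), and your $|T|\ge 5$ branch cannot recover it since for $q=3$ that branch is vacuous ($|\ell\cup\{P\}|=4$). Read literally, your classification of the minimal elements of $N_2$ for $q=3$ would therefore omit the $(q^2+q)(q^2-q)=72$ complements of line-plus-point configurations, which the theorem (stated for all $q\ge 3$) includes. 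The repair is immediate --- for $q=3$ the collinear-triple-plus-point configuration coincides with the line-plus-point family and is maximal --- but it must be said, and it is precisely the sort of small-field degeneration that forced the paper to argue $q=3$ separately; your closing caveat about small fields anticipates this but does not fix it.
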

\begin{proof}
$(a)$ 
For all $q \ge 2$, the minimal elements of $N_1$ (also called circuits) are the complements of sets that are maximal among the supports of conics in $\A^2_q$. 
Thus the statement follows from  Proposition~\ref{minwords} and Corollary~\ref{cor:smallq}.

$(b)$ Assume that $q \ge 4$. We first determine minimal sets of $N_2$. 
Any set 
$\sigma$ of cardinality at most  $q^2-q-2$ is the complement of a set of cardinality at least $q+2$, which, in case it lies on a conic, must lie on a line pair, which it defines uniquely. Hence the nullity of $\sigma$ is $0$ or $1$. 

Let $E\setminus \sigma$ of cardinality $q+1$ be such that it lies on a conic (a necessary condition for  $\sigma$ to have positive nullity, by Lemma~\ref{help}). If it lies on an irreducible conic, it cannot lie on any other conic by Bezout's theorem (Theorem~\ref{Bezout}),
since 
$q+1 \ge 5 > 2 \times 2=4.$
Thus $n(\sigma)=1$, by Lemma~\ref{help}.  Hence a necessary condition for $\sigma$ to have nullity $2$ is that $E\setminus \sigma$ lies on a line pair, which it does not define uniquely. If there are at least $2$ points from $E\setminus \sigma$ on each line in such a pair, then the  lines are indeed defined uniquely. Hence the only possibility for $\sigma$ to have nullity at least $2$ is that $E\setminus \sigma$ consists of all $q$ points on the line, and $1$ point outside the line.
In that case it indeed has nullity exactly $2$.
Moreover, if we add one point to $E\setminus \sigma$, then the other line of a line pair  containing the extended set, is also defined uniquely, so the nullity of its complement, strictly contained in $\sigma$ is $1$. Hence $\sigma$ is minimal in $N_2$. 
There are clearly $(q^2+q)(q^2-q)$ choices of such pairs of a line and a point outside the line, and therefore of such minimal $\sigma$ of cardinality $q^2-q-1$.

Now consider any $E\setminus \sigma$, such that its cardinality satisfies $5 \le |E\setminus \sigma| \le q$ (or $q^2-q \le |\sigma| \le q^2-5$ if one prefers), and such that $E\setminus \sigma$ lies on a conic. 
If $E\setminus \sigma$ lies on an irreducible conic, then $n(\sigma)=1$, since $E\setminus \sigma$ cannot lie on two conics, by Bezout's theorem. If $E\setminus \sigma$ lies on a line, then $n(\sigma)=3$ by Lemma~\ref{help}, a case which will be studied later.
If $E\setminus \sigma$ lies on a line pair, then $n(\sigma)=1$ if the line pair is defined uniquely, so this case can be excluded. This implies that all but one of the points in $E\setminus \sigma$ are on one of the lines of the line pair, while the last point is outside it. But then, since $|E\setminus \sigma| < q+1$, this $E\setminus \sigma$ is strictly contained in the set of $\it{all}$ points on the line, and the given point outside it, a complement of a set with nullity $2$, which we found above.
Hence $\sigma$ is not minimal in $N_2$.

If $|E\setminus \sigma|=4$, all arguments are the same as in the case $5 \le |E\setminus \sigma| \le q$, except in the case when $E\setminus \sigma$ lies on an irreducible cubic. In this case, $n(\sigma)=2$. Indeed, we can not have all $4$ points, and not even $3$ of them, on a line, by Bezout's theorem, and since $5$ points determines the conic uniquely, the $4$ points must impose $4$ independent conditions on the $6$-dimensional affine system of conics. 
Hence we conclude that with $E\setminus \sigma$ being $4$ points, no $3$ of which are on a line, $n(\sigma)=2$. Now to check minimality, remove a point from $\sigma$, i.e. add a fifth point to $E\setminus \sigma$. It is clear that passing through these $5$ points give $5$ independent conditions on conics: 
If a line pair contains them, we must have $3$ of them on a line, and no irreducible conic  contains them, and the line pair is uniquely determined since there are no 4 points on one line. If no line pair contains them, then it also holds that two 
irreducible conics cannot contain them, by Bezout's theorem again. Hence $\sigma$ is a minimal set in $N_2$. There are $\frac{q^2(q^2-1)(q^2-q)(q^2-3q+3)}{24}$ choices of such quadrilaterals $E\setminus \sigma$, and therefore of such minimal $\sigma$.

If $|E\setminus \sigma| \le 3$, i.e., $\sigma \ge q^2-3$,  we utilize the fact that $r(E)=q^2-6$ to conclude $$
n(\sigma) = |\sigma|-r(\sigma) \ge |\sigma| -r(E)\ge (q^2-3)-(q^2-6)=3.
$$
Thus $\sigma$ cannot be in $N_2.$
This settles the $N_2$-part of the proof.

Let us determine the inclusion-minimal elements of $N_3.$ Note that the nullity of $\sigma$ is at most $2$ if $E\setminus \sigma$ is of cardinality at least $q+1$, as we saw in the analysis of the $N_2$-case above. 
On the other hand, if $E\setminus \sigma$ has $q$ points on a line, then $n(\sigma)=3$ by Lemma~\ref{help}. Since adding a point to $E\setminus \sigma$ reduces the nullity of $\sigma$ to $2$, the set $\sigma$ of the complement of the $q$ points on the line is indeed minimal in $N_3$. 
There are $q^2+q$ such lines in $\mathbb{A}_q^2.$

Let $4 \le |E\setminus \sigma| \le q-1$. If $E\setminus \sigma$ is on a line, then $\sigma$ cannot be minimal in $N_3$. This is because it (although having nullity $3$) is strictly contained in the set of {\it all} points on the line, which we have seen is the complement of a set in $N_3$.
If the points of $E\setminus \sigma$ are not on a line, then they impose at least $4$ independent conditions on conics, and $n(\sigma) \le 2$, as we saw in the analysis in the $N_2$-case
($3$, but not $4$ points on a line give a fixed line and a line through a fixed point, while a configuration of $4$ points, no three of them on a line, also give $4$ independent conditions on conics, as we saw above).

If $|E\setminus \sigma| \le 2$, then $n(\sigma) \ge  |\sigma| - r(E) \ge (q^2-2)-(q^2-6)=4$. Thus the only case  remains to be studied is $|E\setminus \sigma|=3$. Since $3$ points always impose independent conditions on conics, we have $n(\sigma)=3$, regardless of the condition that these $3$ points are collinear or not. But if they are collinear, they are contained in the set of $q$ points on a line, a set which is the complement of a set of nullity $3$, as we saw above. Hence $\sigma$ is not minimal in $N_3$ in this case. But the complement $\sigma$  of a set $E\setminus \sigma$ of  $3$ non-collinear points both has nullity $3$ and is minimal in $N_3$, since, as we have seen, we add a fourth point, we get $4$ independent conditions on conics. Clearly, there are $\frac{q^2(q^2-1)(q^2-q)}{6}$ choices of $E\setminus \sigma$, and therefore of $\sigma$.

Since we have $n(\sigma)=3$, for all sets $\sigma$ of cardinality $q^2-3$, and nullity $6$ for $E$,
we must have nullity $6-i$ for all sets of cardinality $q^2-i$, for $i=0,1,2,3.$ This implies that all sets of cardinality $q^2-i$ are minimal in $N_{6-i},$ for $i=0,1,2$, and that there are no other (minimal) sets $\sigma$ than these ones  of these nullities. 
Clearly then, there are ${q^2 \choose 2}$ minimal sets in $N_4$, there are $q^2$ minimal sets in $N_5$, and one minimal set $\emptyset$ in $N_6$.

Now assume that $q=3$. Let $\sigma\subseteq E$. If $|E\setminus \sigma|\ge q+2=5$, the same argument as above shows that $n(\sigma)=1$. 
If $|E\setminus \sigma|=q+1=4$, then $E\setminus \sigma$ will be a set of $4$ points with either no $3$ of which are collinear, or $3$ points on one line and $1$ point outside the line. Note that the case of all $4$ points on one line is not possible since a line in $\A_q^2$ has exactly $q=3$ points. One can prove that $\sigma$ in both the cases are of nullity $2$ and are also minimal in $N_2$, with the proof following along the same lines as that in the case of $q\ge 4$, and hence the same number of such configurations. For $|E\setminus \sigma|\le q=3$, the same arguments work to obtain the inclusion-minimal sets of nullity $i$, where $i\ge 2$, and hence the number of configurations. 


{$(c)$} This is trivial, since $M_2$ is the uniform matroid $U(0,4).$
\end{proof}
\begin{remark}
{\rm One observes that for $q=3$, the minimal sets of nullity $2$ are simply the  complements  of any set of $4$ points. Moreover, the minimal sets of nullity $3$ are simply the complements of any set of $3$ points. So sets of the same nullity $i$ have the same cardinality, whenever $i \ge 2$ in this case.
For $q \ge 4$ we get two different cardinalities $q^2-q-1$ and $q^2-4$ for minimal elements of $N_2$, and two different cardinalities $q^2-q$ and $q^2-3$ for minimal elements of $N_3$ (and at least $3$ different cardinalities for minimal elements of $N_1$).}
\end{remark}

\section{Minimal Free Resolutions for $M_q$} \label{minres}
In this section we assume $q \ge 7.$ We will return to the case $q \le 5$ in Section~\ref{smallq}. It may be helpful to recall the description of minimal free resolutions of Stanley-Reisner rings that was reviewed in several paragraphs preceding  Example~\ref{bettiremark}.

\begin{proposition} \label{somebetti}
    Assume that $q\ge 7$. For $0\le j\le q^2$, the 
    $\N$-graded Betti numbers $\beta_{1,j}$ of $C_q$ are as follows:
    \begin{align*}
        \beta_{1,q^2-q+1} \ &= \ \frac{q^3(q^2-1)}{2},
        & \beta_{1,q^2-2q+1} \  &= \  \frac{q^4+q^3}{2}, 
        \\
        \beta_{1,q^2-q} \ &= \ q^4-q^2, 
         & \beta_{1,q^2-2q} \  &= \ \frac{q^3-q}{2},  \\ 
 \beta_{1,q^2-q-1} \ &= \ \frac{q^3(q-1)^2}{2}, \qquad \text{and }  & \beta_{1,j}\  &=  \ 0  \ \text{ for all other } j.
    \end{align*}  
\end{proposition}
\begin{proof}
    Let $j\in \{0.1. \dots, q^2\}$. 
    From Corollary~\ref{betainfo}(a),  we see that $\beta_{1,j} \ne 0$ precisely when $j$ is the cardinality of a circuit, and moreover,  in view of  
    \eqref{eq:VariousBeta}  and Remark~\ref{rem:newlynumbered}(ii), $\beta_{1,j}$ is the number of  inclusion-minimal subsets of $N_1$ with $j$ elements. Now the desired result follows from Theorem \ref{nullity}(a).
\end{proof}

Let $\Delta$ be the simplicial complex formed by the independent sets of the  matroid $M_q$ corresponding to the code $C_q=\RM_q(2,2)$. We know that the length of a minimal free resolution of the Stanley-Reisner ring $R_\Delta $ is $\dim C_q$, which is $6$ (since $q>2$). Thus this resolution looks like 
$$
0 \longleftarrow R_\Delta  \overset{\partial_0}{\longleftarrow} F_0=S  \overset{\partial_1}{\longleftarrow} F_1 \longleftarrow \cdots \overset{\partial_6}{\longleftarrow} F_6\longleftarrow 0, 
$$
where $F_0=S$ and by Proposition~ \ref{somebetti}, 
\begin{eqnarray*}
 & & F_1 = S(-q^2+q-1)^{\frac{q^3(q^2-1)}{2}} \oplus S(-q^2+q)^{q^4-q^2}\oplus  S(-q^2+q+1)^{\frac{q^3(q-1)^2}{2}}  \\  & & \qquad \qquad \oplus \; S(-q^2+2q-1)^{{\frac{q^4+q^3}{2}}
}\oplus S(-q^2+2q)^{{\frac{q^3-q}{2}}}
\end{eqnarray*}
Now, arguing as in the proof of Proposition~\ref{somebetti}, but using Theorem~\ref{prep}(a) and Theorem~\ref{nullity}(b), we see that 
$$
F_2=S(-q^2+q+1)^{(q^4-q^2)\beta_{2,\theta}} \oplus S(-q^2+4)^{\frac{q^2(q^2-1)(q^2-q)(q^2-3q+3)}{24}\beta_{2,\gamma}},
$$
where $\theta$ is a fixed subset of $\A^2_q$ given by the complement of 
 the union of a line and a point outside this line, and $\gamma$ is the complement of a fixed quadrilateral (i.e., a set of $4$ points in $\A^2_q$, no three of which are collinear). Indeed, Theorem~\ref{nullity}(b) tells us how many subsets of $\A^2_q$ of types $\theta$ and $\gamma$ we have. Also, 
 we 
 tacitly used the fact the $\N^E$-graded Betti number $\beta_{2,\sigma}$ is the same as $\beta_{2,\theta}$ or $\beta_{2,\gamma}$ according as $\sigma$ is of type  $\theta$ or $\gamma$. (See Remark~\ref{rem:newremark} below.) 
 In a similar manner, 
 $$
 F_3= S(-q^2+q)^{(q^2+q)\beta_{3,\alpha}} \oplus S(-q^2+3)^{\frac{q^2(q^2-1)(q^2-q)}{6}\beta_{3,\delta}},
 $$
 where $\alpha$ and $\delta$ denote complements of a fixed line in $\A^2_q$ and of a fixed set of $3$ non-collinear points in $\A^2_q$, respectively. Furthermore, 
$$
F_4=S(-q^2+2)^{{\binom{q^2}{2}}\beta_{4,\epsilon}},\quad F_5=S(-q^2+1)^{q^2\beta_{5,\omega}}
\quad \textrm{and} \quad F_6=S(-q^2)^{\beta_{6,E}}, 
$$
where $\epsilon$ and $\omega$ denote the complements of a line pair 
and of a point in $E=\A^2_q$, respectively. 

\begin{remark}\label{rem:newremark}
{\rm 
  If $\sigma, \tau$ are subsets of $E$ of the same cardinality, then it is not true, in general, that $\beta_{i, \sigma} = \beta_{i, \tau}$. To see this, it suffices to  use Theorem~\ref{prep}(a) and find  $\sigma, \tau \subseteq E$ 
  of the same cardinality such that $\sigma$ is inclusion-minimal in $N_i$, but $\tau$ is not. However, 
  if $i\ge 2$ and $\tau \in N_i$ is one of the sets $\theta, \gamma, \alpha, 
  \delta, \epsilon, \omega$  defined above, and  if $\sigma$ is a cycle in $N_i$ of the same ``type" as $\tau$, then $\beta_{i, \sigma} = \beta_{i, \tau}$. This follows from Proposition~\ref{mu} by noting that there is an inclusion-preserving bijection between the posets of subcycles of $\sigma$ and $\tau$, and that the M\"obius number $\mu_M^{ }(\sigma)$ depends only on the inclusion relations of cycles (of lower nullity) contained in $\sigma$. 
  }
\end{remark}

At any rate, the discussion above shows that an $\N$-graded minimal free resolution of the Stanley-Reisner ring for the matroid $M_q$ is given by 
\begin{eqnarray}\label{eq:RMresolution}
& & 0 \longleftarrow R_\Delta  \overset{\partial_0}{\longleftarrow} S  \overset{\partial_1}{\longleftarrow} S(-q^2+q-1)^{\frac{q^3(q^2-1)}{2}} \oplus S(-q^2+q)^{q^4-q^2}\oplus \nonumber \\[.2em] 
& & \qquad S(-q^2+q+1)^{\frac{q^3(q-1)^2}{2}} \oplus S(-q^2+2q-1)^{{\frac{q^4+q^3}{2}}
}\oplus S(-q^2+2q)^{{\frac{q^3-q}{2}}}  \nonumber \\[.2em]
& & \overset{\partial_2}{\longleftarrow} S(-q^2+q+1)^{(q^4-q^2)\beta_{2,\theta}} \oplus S(-q^2+4)^{\frac{q^2(q^2-1)(q^2-q)(q^2-3q+3)}{24}\beta_{2,\gamma}}  \nonumber \\[.2em]
& & \overset{\partial_3}{\longleftarrow} S(-q^2+q)^{(q^2+q)\beta_{3,\alpha}} \oplus S(-q^2+3)^{\frac{q^2(q^2-1)(q^2-q)}{6}\beta_{3,\delta}} \overset{\partial_4} \longleftarrow S(-q^2+2)^{{{q^2}\choose{2}}\beta_{4,\epsilon}} \nonumber \\[.2em]
&  &   \overset{\partial_5}{\longleftarrow} S(-q^2+1)^{q^2\beta_{5,\omega}} \overset{\partial_6}{\longleftarrow} 
S(-q^2)^{\beta_{6,E}}{\longleftarrow} 0.
\end{eqnarray}
It remains to determine the $\N^E$-graded Betti numbers 
$$
\beta_{2,\theta}, \; \beta_{2,\gamma}, \; \beta_{3,\alpha}, \; \beta_{3, \delta}, \; \beta_{4,\epsilon}, \; \beta_{5,\omega}, \; \beta_{6,E},
$$  
or equivalently,  the $\N$-graded Betti numbers 
\begin{equation}\label{eq:Bettiseq}
\beta_{2,q^2-q-1}, \; \beta_{2,q^2-4}, \; \beta_{3,q^2-q}, \; \beta_{3, q^2-3}, \; \beta_{4,q^2-2}, \; \beta_{5,q^2-1}, \; \beta_{6,q^2}.
\end{equation} 
We shall do that presently. But it may be interesting to note the following. 

\begin{remark}
{\rm 
Using \eqref{eq:RMresolution} and Theorem \ref{prep} (c), we readily see that for $q\ge 7$,  the generalized Hamming weights $d_i = d_i(C_q)$ for $1\le i \le 6$
 are given by
$$
d_1=q^2-2q, \; d_2=q^2-q-1, \; d_3=q^2-q, \; d_4=q^2-2, \; d_5=q^2-1, \; d_6=q^2.
$$ 
Such generalized Hamming weights were found for all Reed-Muller codes by Heijnen and Pellikaan \cite{HP}; so this is just a special case of their result. 
}
\end{remark}

In order to determine the 7 unknown Betti numbers in \eqref{eq:Bettiseq}, one can try to use the known values in Proposition~\ref{somebetti} and the Boij-S\"oderberg equations given in  Proposition~\ref{BS-HK} and Corollary~\ref{HK-BS}. But these only give $6$ independent equations. Hence we need to find at least one more $\beta_{i,j}$ using other methods.
 
 \begin{lemma} \label{sigma2}
 Let $\theta$ be as above, viz.,  the complement of 
 the union of a line and a point outside this line. Then 
 $\beta_{2,\theta}=q$. 
 Consequently, $\beta_{2,q^2-q-1} = q^5-q^3$. 
 \end{lemma}
 
 \begin{proof}
  Write $E\setminus \theta = L \cup \{P\}$, where $L$ is a line in $E= \A^2_q$ and $P$ is a point of $E$ outside $L$. 
  Here $n(\theta)=2$ and $|\theta|=q^2-q-1$.  Thus, in view of Proposition~\ref{minwords}, the circuits contained in $\theta$ are the complements of the $q+1$ line pairs consisting of $L$ and a line $L'$ through $P$ (regardless of whether or not $L'$ is parallel to $L$). Proposition~\ref{mu} and the recursion for the M\"obius function $\mu_{M_q}^{ }$ then gives:
$\beta_{2,\theta}=|-(q+1)+1|=q$, 
and consequently
$\beta_{2,q^2-q-1}=q(q^4-q^2)=q^5-q^3.$
 \end{proof}

 \begin{remark}\label{rem:similarbeta2}
 {\rm 
     A careful examination of the proof of Lemma~\ref{sigma2} and a comparison of Proposition~\ref{minwords} and parts (a), (b), (c) of Corollary~\ref{cor:smallq} shows that the result in Lemma~\ref{sigma2} is, in fact, valid for any $q\ge 3$.
     }
 \end{remark}

 
 

We are now in a position to determine all the Betti numbers of $M_q$. As noted in Remark~\ref{newphi}, it would be more efficacious to use equations 
\eqref{eq:BS-phi} to find the $\phi_j$'s first.
 
 \begin{corollary}\label{thephi}
Let $\phi_j$ for $j=0,\ldots,q^2$ be as in Definition \ref{phis} for the simplicial complex $\Delta$ corresponding to the matroid $M_q$. Then 
for $q\ge 7$ and $0\le i \le 6$, all the nonzero values of the $\beta_{i,j}$, and all the nonzero values of $\phi_j$ 
are as follows:
\begin{eqnarray*}
&& \phi_{0}=\beta_{0,0} =1,\textrm{ }  \phi_{q^2-2q}=-\beta_{1,q^2-2q}=-\frac{q^3-q}{2}, \textrm{ } \\
&& \phi_{q^2-2q+1}=-\beta_{1,q^2-2q+1}=-\frac{q^4+q^3}{2},\\
&&\phi_{q^2-q-1}=-\beta_{1,q^2-q-1}+\beta_{2,q^2-q-1}=\frac{q^5+2q^4-3q^3}{2},\textrm{ and }\beta_{2,q^2-q-1}=q^5-q^3,\\
&&\phi_{q^2-q}=-\beta_{1,q^2-q}-\beta_{3,q^2-q}=-q^5+3q^3-2q \textrm{ and } \\[.5em]
&& \qquad \qquad \beta_{3,q^2-q}=q^5-q^4-3q^3+q^2+2q,\\
&&\phi_{q^2-q+1}=-\beta_{1,q^2-q+1}=-\frac{q^5-q^3}{2},\\
&&\phi_{q^2-4}=\beta_{2,q^2-4}=\frac{q^9-4q^8+5q^7+q^6-6q^5+3q^4}{24},\\
&&\phi_{q^2-3}= -\beta_{3,q^2-3}=-\frac{q^9-5q^8+8q^7-9q^5+5q^4}{6},\\
&&\phi_{q^2-2}=\beta_{4,q^2-2}=\frac{q^9-6q^8+13q^7-5q^6-14q^5+11q^4}{4},\\
&&\phi_{q^2-1}=-\beta_{5,q^2-1}=-\frac{q^9-7q^8+20q^7-20q^6-15q^5+30q^4-9q^3}{6},\\
&&\phi_{q^2}=\beta_{6,q^2}=\frac{q^9-8q^8+29q^7-51q^6+18q^5+59q^4-60q^3+36q-24}{24}.
\end{eqnarray*}
 \end{corollary}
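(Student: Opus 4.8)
The plan is to unwind Definition~\ref{phis} directly: by definition $\phi_j=\sum_{i=0}^{6}(-1)^i\beta_{i,j}$ (here $6=q^2-\rk(M_q)$), so it suffices to know, for each internal degree $j$, which homological degrees $i$ contribute a nonzero $\N$-graded Betti number. All of this information is already at hand: $\beta_{0,0}=1$ and no other $\beta_{0,j}$ is nonzero; the five Betti numbers $\beta_{1,j}$ with $j\in\{q^2-2q,\,q^2-2q+1,\,q^2-q-1,\,q^2-q,\,q^2-q+1\}$ are given by the first Proposition of Section~\ref{minres}; and the seven Betti numbers $\beta_{2,q^2-q-1}$, $\beta_{2,q^2-4}$, $\beta_{3,q^2-q}$, $\beta_{3,q^2-3}$, $\beta_{4,q^2-2}$, $\beta_{5,q^2-1}$, $\beta_{6,q^2}$ are given by Corollary~\ref{betas}. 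Every other $\beta_{i,j}$ vanishes, as one reads off the resolution~\eqref{eq:RMresolution}.

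The one point that needs checking is that, for $q\ge 7$, the internal degrees in this list are pairwise distinct apart from two deliberate coincidences: degree $q^2-q-1$ carries both $\beta_{1,q^2-q-1}$ and $\beta_{2,q^2-q-1}$, and degree $q^2-q$ carries both $\beta_{1,q^2-q}$ and $\beta_{3,q^2-q}$. The other potential clashes, namely $q^2-q+1=q^2-4$, $q^2-q+1=q^2-3$, and $q^2-q-1=q^2-4$, would force $q=5$, $q=4$, $q=3$ respectively and hence do not occur here (this is exactly why the small-$q$ cases are postponed to Section~\ref{smallq}). Granting this, the alternating sum collapses in each relevant degree to at most two terms, giving $\phi_0=\beta_{0,0}=1$, $\phi_j=-\beta_{1,j}$ for $j\in\{q^2-2q,\,q^2-2q+1,\,q^2-q+1\}$, $\phi_{q^2-q-1}=-\beta_{1,q^2-q-1}+\beta_{2,q^2-q-1}$, $\phi_{q^2-q}=-\beta_{1,q^2-q}-\beta_{3,q^2-q}$, $\phi_{q^2-4}=\beta_{2,q^2-4}$, $\phi_{q^2-3}=-\beta_{3,q^2-3}$, $\phi_{q^2-2}=\beta_{4,q^2-2}$, $\phi_{q^2-1}=-\beta_{5,q^2-1}$, $\phi_{q^2}=\beta_{6,q^2}$, and $\phi_j=0$ for all remaining $j$.

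It then remains only to substitute the explicit polynomial values of these Betti numbers and perform the two elementary polynomial additions needed in degrees $q^2-q-1$ and $q^2-q$; this is routine and produces the stated formulas. There is no genuine obstacle here, since the substantive work has already been done in Lemma~\ref{sigma2} and Corollary~\ref{betas}; the coincidence bookkeeping in the previous paragraph is the only thing that is easy to get wrong. If desired, one can use the Boij-S\"oderberg relations $\sum_{j=0}^{q^2} j^s\phi_j=0$ for $0\le s\le 5$ from Corollary~\ref{HK-BS} as an independent consistency check on the resulting list.
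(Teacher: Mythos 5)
Your proposal is correct and is essentially the paper's own argument: the paper also deduces Corollary~\ref{thephi} directly from Definition~\ref{phis} together with the Betti numbers in the resolution~\eqref{eq:RMresolution} and Corollary~\ref{betas}. Your extra bookkeeping verifying that for $q\ge 7$ the internal degrees do not collide (so each $\phi_j$ has at most the two intended contributions) is a sensible elaboration of a point the paper leaves implicit, not a different method.
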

 
 \begin{proof}
 We find $\phi_j$ for $j=0, q^2-2q, q^2-2q+1, q^2-q-1, q^2-q+1$, using the information from Proposition~\ref{somebetti} and 
Lemma~\ref{sigma2}, and the  resolution \eqref{eq:RMresolution}. We then  use the $6$ linearly independent  Boij-Söderberg equations, as expressed in formula  \eqref{eq:BS-phi},  to find the $\phi_j$, for the remaining  $6$ values, viz., $j=q^2-q,q^2-4,q^2-3,q^2-2,q^2-1, q^2$,
using, for example, SageMath. Having found all the $\phi_j$, we may use the information that we  already have from Proposition~\ref{somebetti} and 
Lemma~\ref{sigma2}, and the resolution \eqref{eq:RMresolution}, to find the 
Betti numbers.
 \end{proof}
 
 \section{Minimal Free Resolutions for the Elongations of $M_q$} \label{minreselong}
In this section, we consider the elongations $M_q^{(\ell)}$ of the matroid $M_q = (E, \mathcal{I}_q)$ corresponding to $C_q$, and determine its minimal free resolutions. Note that $\rk(M_q)= q^2-6$ and thus we may restrict $\ell$ to $\{0,1, \dots , 6\}$. We will assume $q\ge 7$ and use 
the notations in Definition~\ref{phis} with $M=M_q$ throughout this section. 


The key observation we will utilize in this section is the following.
 
 
 

 \begin{lemma} \label{minelong}
 For $0\le i \le q^2-6$, let $N_i$ be as in Definition~\ref{def:nullsetsetc} for the matroid $M_q = (E, \mathcal{I}_q)$, where $E=\A^2_q$. 
 For $0\le \ell \le 6$ and $0\le i \le q^2 - 6 - \ell$, let $N_i^{(\ell)}$ be the set of subsets of $E$ of  nullity $i$ for the $\ell$-th elongation matroid $M_q^{(\ell)}$ of $M_q$. Then the inclusion-minimal elements of  $N_{i}^{(\ell)}$, and thus the only subsets $\sigma \subseteq E$ that give rise to nonzero 
 $\beta_{i,\sigma}^{(\ell)}$, 
 are the inclusion-minimal elements of $N_{i+\ell}$, for  $i=1,\ldots,q^2 - 6 - \ell$. 
 \end{lemma} 
 \begin{proof}
We have seen in the proof of Corollary \ref{pureresol}(b) that $N_i^{(\ell)}=N_{i+\ell}$ for any $i=1,\ldots,q^2 - 6 - \ell$. This readily implies the desired result. 
 \end{proof}
 
 For a minimal $\N^E$-graded resolution of $R_{\Delta_1}$, 
we can use the same minimal sets of $N_2,\ldots,N_6$ (referring to $M_q$) that we have  already found, but the Betti numbers, now called $\beta_{i,\sigma}^{(1)}$,  will in general not be the same as the $\beta_{i+1,\sigma}$.
For instance, it is clear that all the $\beta_{1,\sigma}^{(1)}$ will be $1$ for the $\sigma$ in the original $N_2=N_1^{(1)}$, since they are now circuits for the new matroid. 
Nonetheless, by arguing as in Section~\ref{minres}, we readily see that 
a minimal free resolution of the Stanley-Reisner ring $R_{\Delta_1}$ of $M_q^{(1)}$ 
is of the following form:
\begin{eqnarray}\label{eq:RMresolutionM1}
& & 
0 \longleftarrow R_{\Delta_1}  \overset{\partial_0^{(1)}}{\longleftarrow} S \overset{\partial_1^{(1)}}{\longleftarrow} S(-q^2+q+1)^{q^4-q^2} \oplus S(-q^2+4)^{\frac{q^2(q^2-1)(q^2-q)(q^2-3q+3)}{24}} \qquad  \nonumber \\[.2em] 
& & \  \overset{\partial_2^{(1)}}{\longleftarrow} 
 S(-q^2+q)^{(q^2+q)\beta_{2,\alpha}^{(1)}} \oplus S(-q^2+3)^{\frac{q^2(q^2-1)(q^2-q)}{6}\beta_{2,\delta}^{(1)}} \overset{\partial_3^{(1)}} \longleftarrow S(-q^2+2)^{{\binom{q^2}{2}}\beta_{3,\epsilon}^{(1)}}  \nonumber \\[.2em] 
& & \ \overset{\partial_4^{(1)}}{\longleftarrow} S(-q^2+1)^{q^2\beta_{4,\omega}^{(1)}} \overset{\partial_5^{(1)}}{\longleftarrow} 
S(-q^2)^{\beta_{5,E}^{(1)}}{\longleftarrow} 0,   
\end{eqnarray}
where $\alpha, \delta, \epsilon$ and $\omega$ are as in Section~\ref{minres}. 
Here we have $5$ unknown $\N^E$-graded Betti numbers $\beta_{2,\alpha}^{(1)}, \;
\beta_{2,\delta}^{(1)}, \;
\beta_{3,\epsilon}^{(1)}, \;
\beta_{4,\omega}^{(1)}$, and 
$\beta_{5,E}^{(1)}$, or equivalently, $5$ unknown $\N$-graded Betti numbers or  $5$ unknown $\phi^{(1)}_{j}$, namely, 
$$
\beta^{(1)}_{2,q^2-q}, \; 
    \beta^{(1)}_{2,q^2-3}, \; \beta^{(1)}_{3,q^2-2}, \; \beta_{4,q^2-1}^{(1)}, \; \beta^{(1)}_{5,q^2} \quad \text{or} \quad 
    \phi^{(1)}_{q^2-q}, \;
     \phi^{(1)}_{q^2-3}, \;
      \phi^{(1)}_{q^2-2}, \;
       \phi^{(1)}_{q^2-1}, \;
        \phi^{(1)}_{q^2}.
$$
Thankfully, there are $5$ Boij-S\"oderberg equations of the form \eqref{eq:BS-phi} for the unknown $\phi^{(1)}_{j}$.  Solving these equations in SageMath proves the following. 

\begin{corollary}\label{thephi1}
For $q \ge 7$, the nonzero values of $\N$-graded Betti numbers $\beta^{(1)}_{i, j}$ and the corresponding $\phi^{(1)}_j$ for the first elongation $M_q^{(1)}$ are as follows. 
\begin{eqnarray*}
&&
\phi^{(1)}_{0} =\beta^{(1)}_{0, 0} = 1, \; \; 
\phi^{(1)}_{q^2-q-1}=-\beta^{(1)}_{1,q^2-q-1}=-(q^4-q^2),\\
&&\phi^{(1)}_{q^2-4}=-\beta^{(1)}_{1,q^2-4}=
-\frac{q^2(q^2-1)(q^2-q)(q^2-3q+3)}{24},\\
&&\phi^{(1)}_{q^2-q}=\beta^{(1)}_{2,q^2-q}=q^4-2q^2-q,\\
&&\phi^{(1)}_{q^2-3}=\beta^{(1)}_{2,q^2-3}=\frac{q^8-4q^7+7q^6-q^5-8q^4+5q^3}{6},\\
&&\phi^{(1)}_{q^2-2}=-\beta^{(1)}_{3,q^2-2}=-\frac{q^8-4q^7+9q^6-7q^5-10q^4+11q^3}{4},\\
&&\phi^{(1)}_{q^2-1}=\beta_{4,q^2-1}^{(1)}=\frac{q^8-4q^7+11q^6-17q^5-6q^4+27q^3-6q^2}{6},\\
&&\phi^{(1)}_{q^2}=-\beta^{(1)}_{5,q^2}=-\frac{q^8-4q^7+13q^6-31q^5+10q^4+59q^3-48q^2-24q+24}{24}.
\end{eqnarray*} 
\end{corollary}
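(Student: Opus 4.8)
The plan is to mirror exactly the structure already set up for $M_q$ in Corollary~\ref{betas} and Corollary~\ref{thephi}, but now applied to the first elongation $M_q^{(1)}$. By Lemma~\ref{minelong}, the inclusion-minimal elements of $N_i^{(1)}$ are precisely the inclusion-minimal elements of $N_{i+1}$, which were completely classified in Theorem~\ref{nullity}(b): for $i=1$ these are the $(q^2+q)(q^2-q)$ complements of a line-plus-point (cardinality $q^2-q-1$) together with the $\tfrac{q^2(q^2-1)(q^2-q)(q^2-3q+3)}{24}$ complements of quadrilaterals (cardinality $q^2-4$); for $i=2$, the complements of a line and of a triangle; for $i=3$, complements of point pairs; for $i=4$, complements of single points; for $i=5$, the set $E$. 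This already pins down the shape of the minimal free resolution of $R_{\Delta_1}$ displayed just above the statement, and it shows that the only unknowns are the five Betti numbers $\beta^{(1)}_{2,\alpha}$, $\beta^{(1)}_{2,\delta}$, $\beta^{(1)}_{3,\epsilon}$, $\beta^{(1)}_{4,\omega}$, $\beta^{(1)}_{5,E}$.

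First I would observe that, since each minimal element of $N_1^{(1)} = N_2$ is a circuit of $M_q^{(1)}$, Corollary~\ref{betainfo}(a) (or Proposition~\ref{mu}) gives $\beta^{(1)}_{1,\sigma}=1$ for every such $\sigma$, hence $\beta^{(1)}_{1,q^2-q-1}=q^4-q^2$ and $\beta^{(1)}_{1,q^2-4}=\tfrac{q^2(q^2-1)(q^2-q)(q^2-3q+3)}{24}$, exactly as recorded in the first two lines of the corollary (with a sign, since these enter $\phi^{(1)}$ with coefficient $(-1)^1$). Next, because $M_q^{(1)}$ is a matroid of rank $\mathrm{rk}(M_q)+1 = (q^2-6)+1$ on $q^2$ elements, its Stanley-Reisner ring is Cohen-Macaulay (matroid complexes are shellable), so Proposition~\ref{BS-HK} / Corollary~\ref{HK-BS} applies and yields $k^{(1)} = q^2 - \mathrm{rk}(M_q^{(1)}) = 5$ Boij-Söderberg equations $\sum_j j^s \phi^{(1)}_j = 0$ for $s=0,1,2,3,4$. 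Since the nonzero-$\phi^{(1)}$ degrees are the seven values $q^2-q-1, q^2-4, q^2-q, q^2-3, q^2-2, q^2-1, q^2$ and the first two $\phi^{(1)}$ are already known, these five linear equations determine the remaining five $\phi^{(1)}_j$ uniquely — equivalently the five unknown Betti numbers — just as in Remark~\ref{newphi}. Carrying out this linear solve (a routine computation, done in SageMath, as the authors note) produces the stated polynomial expressions.

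Concretely, the order of steps is: (1) invoke Lemma~\ref{minelong}(b) and Theorem~\ref{nullity}(b) to fix the list of degrees and the $N_i^{(1)}$; (2) use Corollary~\ref{betainfo}(a) to get $\beta^{(1)}_{1,q^2-q-1}$ and $\beta^{(1)}_{1,q^2-4}$ outright, and hence $\phi^{(1)}_{q^2-q-1}$ and $\phi^{(1)}_{q^2-4}$; (3) write down the five Boij-Söderberg equations from Corollary~\ref{HK-BS} for $M_q^{(1)}$; (4) substitute the two known values and solve the resulting $5\times 5$ linear system for the five remaining $\phi^{(1)}_j$; (5) read off the $\beta^{(1)}_{i,j}$ from the $\phi^{(1)}_j$ via $\phi^{(1)}_j = \sum_i (-1)^i \beta^{(1)}_{i,j}$, using that at each relevant degree $j$ only one $\beta^{(1)}_{i,j}$ is nonzero (purity of the resolution in each of these degrees, which follows because the minimal elements of each $N_i^{(1)}$ that have a given cardinality all contribute to the same homological degree). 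The only genuine content beyond bookkeeping is Step (2), and that is immediate once one notices the minimal elements of $N_2$ become circuits in the elongation.

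The main obstacle — really the only place where care is needed — is verifying that the Boij-Söderberg system is nonsingular, i.e. that the five equations in the five unknowns $\phi^{(1)}_j$ (after removing the two known ones) are linearly independent. This is where Remark~\ref{newphi} is essential: written in the $\phi$-variables the coefficient matrix is a Vandermonde-type matrix in the seven distinct degree values $q^2-q-1, q^2-4, \dots, q^2$, whose relevant $5\times 5$ minors are nonzero for all $q\ge 7$ (the seven degrees are pairwise distinct once $q\ge 7$, which is exactly why that hypothesis is in force throughout this section). Given nonsingularity, uniqueness of the solution is automatic and the explicit formulas follow by direct computation. One should also double-check that no additional minimal element of any $N_i$ was missed and that the cardinality-versus-homological-degree bookkeeping in Step (5) is consistent — but all of this is already guaranteed by Theorem~\ref{nullity}(b), so no new argument is required.
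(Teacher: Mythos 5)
Your proposal is correct and follows essentially the same route as the paper: Lemma~\ref{minelong} plus Theorem~\ref{nullity}(b) fix the shape of the resolution of $R_{\Delta_1}$, the minimal elements of $N_2$ become circuits of $M_q^{(1)}$ so $\beta^{(1)}_{1,q^2-q-1}$ and $\beta^{(1)}_{1,q^2-4}$ are immediate, and the five remaining Betti numbers are obtained from the five Boij--S\"oderberg equations (solved in SageMath), exactly as in Section~\ref{minreselong}. One small bookkeeping point: when you set up the linear system, remember that $\phi^{(1)}_0=1$ (from $\beta^{(1)}_{0,0}=1$) also enters the $s=0$ equation, so the list of degrees with nonzero $\phi^{(1)}_j$ includes $j=0$ in addition to the seven values you name.
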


Using the same principles, 
we see that a minimal free resolution of the Stanley-Reisner ring $R_{\Delta_2}$ of $M_q^{(2)}$ 
is of the following form:
\begin{eqnarray*}
&& 0 \longleftarrow R_{\Delta_2}  \overset{\partial_0^{(2)}}{\longleftarrow} S \overset{\partial_1^{(2)}}{\longleftarrow} 
S(-q^2+q)^{(q^2+q)} \oplus 
S(-q^2+3)^{\frac{q^2(q^2-1)(q^2-q)}{6}}
\\[.2em]
&& \quad \overset{\partial_2^{(2)}} \longleftarrow S(-q^2+2)^{{q^2 \choose  2}\beta_{2,\epsilon}^{(2)}} 
\overset{\partial_3^{(2)}}{\longleftarrow} S(-q^2+1)^{q^2\beta_{3,\omega}^{(2)}} \overset{\partial_4^{(2)}}{\longleftarrow} 
S(-q^2)^{\beta_{4,E}^{(2)}}{\longleftarrow} 0, 
\end{eqnarray*}
%
where $\epsilon$ and $\omega$ are as in Section~\ref{minres}.
This time we have $3$ unknown $\N^E$-graded Betti numbers $\beta_{2,\epsilon}^{(2)}, \; \beta_{3,\omega}^{(2)}$ and 
$\beta_{4,E}^{(2)}$, or equivalently, $3$ unknown $\N$-graded Betti numbers 
$ \beta^{(2)}_{2,q^2-2}, \; \beta^{(2)}_{3,q^2-1}, \; \beta^{(2)}_{4,q^2}$, or  $3$ unknown $\phi^{(2)}_{j}$. 
 Using the Boij-S\"oderberg equations \eqref{eq:BS-phi} for $\phi^{(2)}_{j}$ and 
 SageMath, we then obtain: 

\begin{corollary}\label{thephi2}
For $q \ge 7$, the nonzero values of $\N$-graded Betti numbers $\beta^{(2)}_{i, j}$ and the corresponding $\phi^{(2)}_j$ for the second elongation $M_q^{(2)}$ are as follows.
\begin{eqnarray*}
&&
\phi^{(2)}_{0} =\beta^{(2)}_{0, 0} = 1, \; \; \phi^{(2)}_{q^2-q}=-\beta^{(2)}_{1,q^2-q}=-(q^2+q),\\
&&\phi^{(2)}_{q^2-3}=-\beta^{(2)}_{1,q^2-3}=
-\frac{q^6-q^5-q^4+q^3}{6},\\
&&\phi^{(2)}_{q^2-2}=\beta^{(2)}_{2,q^2-2}=\frac{q^6-q^5-q^4+q^3}{2},\\
&&\phi^{(2)}_{q^2-1}=-\beta^{(2)}_{3,q^2-1}=-\frac{q^6-q^5-q^4-q^3}{2},\\
&&\phi^{(2)}_{q^2}=\beta^{(2)}_{4,q^2}=\frac{q^6-q^5-q^4-5q^3+6q^2+6q-6}{6}.
\end{eqnarray*}

\end{corollary}

 Next, we see in a similar manner that a minimal free resolution of the Stanley-Reisner ring $R_{\Delta_3}$ of $M_q^{(3)}$ 
 is of the following form:
$$
0 \longleftarrow R_{\Delta_3}  \overset{\partial_0^{(3)}}{\longleftarrow} S \overset{\partial_1^{(3)}}{\longleftarrow}  S(-q^2+2)^{{q^2 \choose 2}}  \overset{\partial_2^{(3)}}{\longleftarrow} 
S(-q^2+1)^{q^2\beta_{2,\omega}^{(3)}} \overset{\partial_3^{(3)}}{\longleftarrow} 
S(-q^2)^{\beta_{3,E}^{(3)}}{\longleftarrow} 0,
$$
where $\omega$ as in Section~\ref{minres}. This time, instead of using Boij-S\"oderberg equations and SageMath, we just observe that the resolution above is pure and linear. Thus, it is identical with the minimal free resolution of the uniform matroid $U(q^2-3, q^2)$. In fact, one can also see directly that $M_q^{(3)}$ coincides with $U(q^2-3, q^2)$. Hence,
in view of Example~\ref{ex:unifelong}, 
$M_q^{(3+s)} = U(q^2-3+s, q^2)$ for $0\le s \le 3$. Formulas for the Betti numbers of uniform matroids are given in \cite{JV}. Using these, we readily see that the minimal free resolutions of the Stanley-Reisner rings $R_{\Delta_\ell}$ of $M^{(\ell)}_q$, for $\ell =3,4,5,6$,  are given as follows. 



$$0 \longleftarrow R_{\Delta_3}  \overset{\partial_0^{(3)}}{\longleftarrow} S \overset{\partial_1^{(3)}}{\longleftarrow}  S(-q^2+2)^{{q^2 \choose 2}}  \overset{\partial_2^{(3)}}{\longleftarrow} (-q^2+1)^{q^4-2q^2} \overset{\partial_3^{(3)}}{\longleftarrow} 
S(-q^2)^{\frac{q^4-3q^2+2}{2}}{\longleftarrow} 0, $$ 

$$0 \longleftarrow R_{\Delta_4}  \overset{\partial_0^{(4)}}{\longleftarrow} S  \overset{\partial_1^{(4)}}{\longleftarrow} S(-q^2+1)^{q^2} \overset{\partial_2^{(4)}}{\longleftarrow} 
S(-q^2)^{q^2-1}{\longleftarrow} 0, $$ 

$$0 \longleftarrow R_{\Delta_5}  \overset{\partial_0^{(5)}}{\longleftarrow} S  \overset{\partial_1^{(5)}}{\longleftarrow}  
S(-q^2){\longleftarrow} 0, $$

$$
0 \longleftarrow R_{\Delta_6}  \overset{\partial_0^{(6)}}{\longleftarrow} S {\longleftarrow} 0. 
$$ 
 
This gives:
 \begin{corollary}\label{thephiOthers}
 For $q \ge 7$ and $3\le \ell \le 6$, the nonzero values of $\N$-graded Betti numbers $\beta^{(\ell)}_{i, j}$ and the corresponding $\phi^{(\ell)}_j$ for the $\ell$-th  elongation $M_q^{(\ell)}$ are as follows.
 \begin{eqnarray*}
 \begin{aligned}[c]
\phi^{(\ell)}_{0}&=\beta^{(\ell)}_{0,0}  =1 \text{ for $3\le \ell \le 6$,} \\[1em]
\phi^{(3)}_{q^2-2}&=-\beta^{(3)}_{1,q^2-2}=-\frac{q^4-q^2}{2},\\[1em]
\phi^{(3)}_{q^2-1}&=\beta^{(3)}_{2,q^2-1}=q^4-2q^2,\\[1em]
\phi^{(3)}_{q^2}&=-\beta^{(3)}_{3,q^2}=-\frac{q^4-3q^2+2}{2},
\end{aligned}
\qquad \qquad
\begin{aligned}[c]
& \\[1em]
\phi^{(4)}_{q^2-1}&=-\beta^{(4)}_{1,q^2-1} =-q^2,\\[1em]
\phi^{(4)}_{q^2}&=\beta^{(4)}_{2,q^2} =q^2-1,\\[1em]
\phi^{(5)}_{q^2}&=-\beta^{(5)}_{1,q^2}  =-1.
\end{aligned}
\end{eqnarray*}
 \end{corollary}


\section{Generalized weight polynomials and Higher weight spectra of $C_q$}
\label{sec8}

In this section, we shall prove our main result about the higher weight spectra of $C_q= \RM_q(2,2)$ in the case $q\ge 7$. 
 A key step is the following. 

\begin{proposition}\label{prop:GWPofRM}
For $q \ge 7$, the generalized weight polynomials $P_j(Z)$ of $C_q$, where $0\le j \le q^2$, are as follows.
\begin{eqnarray*}
&& P_0(Z)=1, \\
&&\vspace{0.1mm}\\
&& P_{q^2-2q}(Z)=\frac{q^3-q}{2}(Z-1),\\
&&\vspace{0.5mm}\\
&& P_{q^2-2q+1}(Z)=\frac{q^4+q^3}{2}(Z-1),\\
&&\vspace{0.5mm}\\
&& P_{q^2-q-1}(Z)=(q^4-q^2)Z^2-\left(\frac{q^5+4q^4-3q^3-2q^2}{2}\right)Z+\left(\frac{q^5+2q^4-3q^3}{2}\right), \\ 
&&\vspace{0.5mm}\\
&& P_{q^2-q}(Z)=(q^2+q)Z^3+(-q^4+q^2)Z^2+(q^5+q^4-3q^3-2q^2+q)Z\\
&& \hspace{2.5cm}-(q^5-3q^3+2q),\\
&&\vspace{0.5mm}\\
&& P_{q^2-q+1}(Z)=\frac{q^5-q^3}{2}(Z-1),\\
&&\vspace{0.5mm}\\
&& P_{q^2-4}(Z)=\left(\frac{q^8-4q^7+5q^6+q^5-6q^4+3q^3}{24}\right)Z^2\\
&& \hspace{2.5cm} -\left(\frac{q^9-3q^8+q^7+6q^6-5q^5-3q^4+3q^3}{24}\right)Z\\
&& \hspace{3cm}+\left(\frac{q^9-4q^8+5q^7+q^6-6q^5+3q^4}{24}\right),\\
 \end{eqnarray*}
 \begin{eqnarray*}
&& P_{q^2-3}(Z)=\left(\frac{q^6-q^5-q^4+q^3}{6}\right)Z^3-\left(\frac{q^8-4q^7+8q^6-2q^5-9q^4+6q^3}{6}\right)Z^2\\
&& \hspace{2.5cm}+\left(\frac{q^9-4q^8+4q^7+7q^6-10q^5-3q^4+5q^3}{6}\right)Z\\
&& \hspace{3cm}-\left(\frac{q^9-5q^8+8q^7-9q^5+5q^4}{6}\right), \\
&&\vspace{0.5mm}\\
&& P_{q^2-2}(Z)=\left(\frac{q^4-q^2}{2}\right)Z^4-\left(\frac{q^6-q^5+q^3-q^2}{2}\right)Z^3\\
&& \hspace{2.5cm}+\left(\frac{q^8-4q^7+11q^6-9q^5-12q^4+13q^3}{4}\right)Z^2\\
&& \hspace{2.8cm}-\left(\frac{q^9-5q^8+9q^7+4q^6-21q^5+q^4+11q^3}{4}\right)Z\\
&& \hspace{3cm}+\left(\frac{q^9-6q^8+13q^7-5q^6-14q^5+11q^4}{4}\right), \\
&&\vspace{0.5mm}\\
&& P_{q^2-1}(Z)=q^2Z^5-(q^4-q^2)Z^4+\left(\frac{q^6-q^5+q^4-q^3-4q^2}{2}\right)Z^3\\
&& \hspace{2.5cm}-\left(\frac{q^8-4q^7+14q^6-20q^5-9q^4+24q^3-6q^2}{6}\right)Z^2\\
&& \hspace{2.8cm}+\left(\frac{q^9-6q^8+16q^7-9q^6-32q^5+24q^4+18q^3-6q^2}{6}\right)Z\\
&& \hspace{3cm}-\left(\frac{q^9-7q^8+20q^7-20q^6-15q^5+30q^4-9q^3}{6}\right), \\
&&\vspace{0.5mm}\\
&& P_{q^2}(Z)=Z^6-q^2Z^5+\left(\frac{q^4-q^2}{2}\right)Z^4-\left(\frac{q^6-q^5+2q^4-5q^3-3q^2+6q}{6}\right)Z^3\\
&& \hspace{2cm}+\left(\frac{q^8-4q^7+17q^6-35q^5+6q^4+39q^3-24q^2}{24}\right)Z^2\\
&& \hspace{2.1cm}-\left(\frac{q^9-7q^8+25q^7-38q^6-13q^5+69q^4-q^3-48q^2+12q}{24}\right)Z\\
&& \hspace{2.2cm}+\left(\frac{q^9-8q^8+29q^7-51q^6+18q^5+59q^4-60q^3+36q-24}{24}\right).
\end{eqnarray*}
and all other $P_j(Z)$ are identically equal to zero.
\end{proposition}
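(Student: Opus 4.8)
The plan is to apply the formula from \cite{JRV}, recalled in Section~\ref{sec3},
$$
P_w(Z)=\sum_{\ell=0}^{6}\bigl(\phi_w^{(\ell)}-\phi_w^{(\ell-1)}\bigr)Z^{\ell},
$$
where $\phi_w^{(\ell)}=\phi_w(\Delta_\ell)$ is the invariant of Definition~\ref{phis} attached to the Stanley-Reisner ring $R_{\Delta_\ell}$ of the $\ell$-th elongation matroid $M_q^{(\ell)}$, with the conventions $\phi_w^{(-1)}=0$ and $\phi_0^{(6)}=1$, $\phi_w^{(6)}=0$ for $w\ge 1$ (the last because $M_q^{(6)}$ is the trivial matroid on $E_q$, whose Stanley-Reisner ring is $S$ itself). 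Every quantity on the right-hand side has already been computed in Sections~\ref{minres} and \ref{minreselong}: $\phi_w^{(0)}=\phi_w$ is listed in Corollary~\ref{thephi}, and $\phi_w^{(1)},\dots,\phi_w^{(5)}$ are listed in Corollaries~\ref{thephi1}, \ref{thephi2} and \ref{thephiOthers}. So the proof amounts to substitution and collection of terms.

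I would carry this out weight by weight. The case $w=0$ is immediate: $\phi_0^{(\ell)}=1$ for all $\ell=0,\dots,6$, so the sum telescopes to its $Z^0$-term, giving $P_0(Z)=1$. For $w\ge 1$, Theorem~\ref{nullity} together with Lemma~\ref{minelong} shows that $\phi_w^{(\ell)}\ne 0$ can happen only when $w$ lies in the list
$$
q^2-2q,\ q^2-2q+1,\ q^2-q-1,\ q^2-q,\ q^2-q+1,\ q^2-4,\ q^2-3,\ q^2-2,\ q^2-1,\ q^2,
$$
and that, for each such $w$, the set of $\ell$ with $\phi_w^{(\ell)}\ne 0$ is an initial segment $\{0,1,\dots,m_w-1\}$ of $\{0,1,\dots,5\}$. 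Since $q\ge 7$, these ten weights are pairwise distinct and distinct from $0$, so there is no collision between $\phi$-invariants coming from different weights. For each $w$ I would read off the relevant polynomials in $q$ from Corollaries~\ref{thephi}--\ref{thephiOthers}, insert them into the telescoping sum, and expand; the resulting polynomial has degree exactly $m_w$, leading coefficient $-\phi_w^{(m_w-1)}$ and constant term $\phi_w$. For example, $w=q^2-q-1$ involves only $\ell=0,1$ and gives $P_{q^2-q-1}(Z)=\phi^{(0)}_{q^2-q-1}+\bigl(\phi^{(1)}_{q^2-q-1}-\phi^{(0)}_{q^2-q-1}\bigr)Z-\phi^{(1)}_{q^2-q-1}Z^{2}$, which upon substitution becomes the stated quadratic; $w=q^2-q$ involves $\ell=0,1,2$ and produces a cubic; and $w=q^2$ involves $\ell=0,\dots,5$ and produces the degree-six polynomial. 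A convenient built-in check is that $P_w(1)=\phi_w^{(6)}$ by telescoping, which is $0$ for $w\ge 1$; this is why all the degree-one $P_w$ come out proportional to $(Z-1)$.

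Finally, for any $w$ not in the above list every $\phi_w^{(\ell)}$ is zero --- this is exactly the content of Corollaries~\ref{thephi}, \ref{thephi1}, \ref{thephi2} and \ref{thephiOthers} (no minimal generator of the resolution of $M_q$ or of any of its elongations sits in internal degree $w$) --- so $P_w(Z)\equiv 0$, which gives the last sentence of the statement. The only labour in all of this is the polynomial arithmetic over $\mathbb{Q}$: expanding and adding the rational-coefficient polynomials in $q$ supplied by the four corollaries. This is entirely mechanical and was cross-checked in SageMath; it is also the only thing left to do, since the genuinely structural work --- identifying the minimal elements of the $N_i$ (Theorem~\ref{nullity}), pinning down the pivotal Betti number $\beta_{2,\theta}=q$ (Lemma~\ref{sigma2}), and then solving the Boij--S\"oderberg equations for $M_q$ and each of $M_q^{(1)},\dots,M_q^{(5)}$ --- is already in place. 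Thus there is no real obstacle here; the one thing to be careful about is the degree-shift bookkeeping in the telescoping sum, namely that each $\phi_w^{(\ell)}$ contributes (with opposite signs) to the coefficients of $Z^{\ell}$ and $Z^{\ell+1}$, so that the ``hard part'' is simply avoiding an off-by-one slip there.
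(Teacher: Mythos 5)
Your proposal is correct and follows exactly the paper's route: substitute the values of $\phi_w^{(\ell)}$ from Corollaries~\ref{thephi}, \ref{thephi1}, \ref{thephi2} and \ref{thephiOthers} into the formula $P_w(Z)=\sum_{\ell}(\phi_w^{(\ell)}-\phi_w^{(\ell-1)})Z^{\ell}$ from \cite{JRV} and expand, with the vanishing of all other $P_w$ coming from the vanishing of the corresponding $\phi_w^{(\ell)}$. Your handling of the $\ell=6$ term (noting $\phi_0^{(6)}=1$ while $\phi_w^{(6)}=0$ for $w\ge 1$) is, if anything, slightly more careful than the paper's remark.
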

\begin{proof}
We use  the following formula, which follows, e.g., from   \cite[Theorem 5.1]{JRV}: 
$$P_j(Z)=\sum_{\ell=0}^{6}(\phi_j^{(\ell)}-\phi_j^{(\ell-1)})Z^{\ell}, \quad \text{for } 0\le j \le q^2,
$$
where, by convention, $\phi_j^{(-1)} =0$ for $0\le j \le q^2$.  
Substituting 
the values of the $\phi_j^{(\ell)}$ that we have found in Corollaries~\ref{thephi}, \ref{thephi1}, \ref{thephi2} and \ref{thephiOthers},   and making 
an elementary calculation, we obtain  the desired result. 
\end{proof}

Now we can determine the higher weight spectra of $C_q$ for $q\ge 7$. 

\medskip

{\it Proof of Theorem~\ref{Fallq}$(a)$.} 
Let $w\in \{0,1,\dots, q^2\}$. 
We use the values of the $P_w(Z)$ from Proposition \ref{prop:GWPofRM}, and the formula:
$$P_w(q^e)=\sum_{r=0}^e A_w^{(r)}\prod_{i=0}^{r-1}(q^e-q^i), \quad \text{for }e\ge 0$$
given for example in \cite{HKM} or \cite[Proposition 6]{J}. Moreover, since each multiplier of $A_w^{(r)}$ in the above 
expression of $P_w(q^e)$ is positive, we see that $A_w^{(r)}$ is zero for all the support weights $w$ for which $P_w(Z)$ is zero. \qed

\medskip

Observe that the support weights $w$, which appear with nonzero $A_{w}^{(r)}$ for some $1\le r\le 6$, are the ones that appear as cardinalities of an inclusion-minimal set in $N_i$ for some $i$. 
We present an alternative explanation of this outcome by the following lemma.

\begin{lemma} \label{support}
The supports of the linear subcodes of $C_q$ are precisely the inclusion-minimal elements of $N_j$, i.e., the cycles of nullity $j$ of the associated parity check matroid $M_q = (E, r)$ for $1 \le j \le k$. 
\end{lemma}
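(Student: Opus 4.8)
The plan is to establish a two-way inclusion between the family of supports of linear subcodes of $C_q$ and the family of cycles of $M_q$. For the easier direction, suppose $D$ is a linear subcode of $C_q$ of dimension $j$. Its support $\Supp(D)$ is the complement of the set of coordinates on which every codeword of $D$ vanishes; equivalently, writing $\sigma = \Supp(D)$, the codewords vanishing outside $\sigma$ form a space of dimension exactly $j$, and by the identification used in Lemma~\ref{help} (via the dual matroid and the Rank--Nullity Theorem) this dimension equals $n(\sigma)$. So $\sigma \in N_j$. To see that $\sigma$ is inclusion-minimal in $N_j$, observe that if $\tau \subsetneq \sigma$ with $n(\tau) = j$, then the space of codewords vanishing outside $\tau$ is still $j$-dimensional; but it is contained in the $j$-dimensional space of codewords vanishing outside $\sigma$, hence equals it, forcing every codeword of $D$ to vanish on $\sigma \setminus \tau$, contradicting $\Supp(D) = \sigma$. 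Thus $\sigma$ is a cycle of nullity $j$.

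For the reverse direction, let $\sigma$ be inclusion-minimal in $N_j$, so $n(\sigma) = j$. By the same dictionary the space $D_\sigma := \{ \mathbf{c} \in C_q : c_i = 0 \text{ for all } i \in E \setminus \sigma \}$ has dimension $n(\sigma) = j$, and clearly $\Supp(D_\sigma) \subseteq \sigma$. It remains to show equality. If $\Supp(D_\sigma) = \tau \subsetneq \sigma$, then every codeword of $D_\sigma$ vanishes outside $\tau$, so $D_\sigma \subseteq D_\tau$, whence $n(\tau) \ge \dim D_\tau \ge j$; combined with the general bound $n(\tau) \le n(\sigma) = j$ (monotonicity of nullity under inclusion, which follows from $r(\tau) \ge r(\sigma) - |\sigma \setminus \tau|$ and $|\tau| = |\sigma| - |\sigma\setminus\tau|$... more precisely $n$ is nondecreasing along chains only up to the codimension, so one argues $n(\tau)\le n(\sigma)$ directly) we get $n(\tau) = j$ with $\tau \subsetneq \sigma$, contradicting minimality of $\sigma$. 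Hence $\Supp(D_\sigma) = \sigma$, exhibiting $\sigma$ as the support of the $j$-dimensional subcode $D_\sigma$.

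The one subtle point, and the place I would be most careful, is the monotonicity claim $n(\tau) \le n(\sigma)$ for $\tau \subseteq \sigma$: this is \emph{not} true for arbitrary subsets of the ground set (nullity can drop when passing to a subset), so I must instead run the argument the other way, using that $D_\tau \supseteq D_\sigma$ would force the support of the \emph{larger} space $D_\sigma$ to already be contained in $\tau$, i.e. the containment $\Supp(D_\sigma)\subseteq\tau\subsetneq\sigma=\Supp(D_\sigma)$ is itself the contradiction, with no nullity comparison needed. Reorganizing around this observation makes the proof clean: a cycle $\sigma\in N_j$ is minimal precisely when $\sigma$ cannot be shrunk without shrinking $D_\sigma$, which is exactly the statement that $\sigma=\Supp(D_\sigma)$. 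Finally I would remark that this lemma gives the conceptual explanation, promised in the paragraph preceding it, for why the support weights $w$ occurring with $A_w^{(r)}\ne 0$ are exactly the cardinalities of inclusion-minimal members of the $N_i$, thereby tying the combinatorial data of Theorem~\ref{nullity} back to the weight spectra computed in Theorem~\ref{Fallq}.
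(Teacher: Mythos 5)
Your overall route is the same as the paper's: identify $n(\sigma)$ with the dimension of the subcode $C_q(\sigma)=\{c\in C_q : \Supp(c)\subseteq\sigma\}$ via the dual rank function, and then compare $C_q(\tau)$ with $C_q(\sigma)$ for $\tau\subsetneq\sigma$; your converse direction (a minimal element of $N_j$ is a support) is essentially the paper's argument and is fine. The forward direction, however, contains a genuine error: you assert that if $D$ is a $j$-dimensional subcode with $\Supp(D)=\sigma$, then ``the codewords vanishing outside $\sigma$ form a space of dimension exactly $j$'', i.e.\ $n(\sigma)=\dim D$. That space is $C_q(\sigma)$, which contains $D$ but can be strictly larger. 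For instance, the one-dimensional subcode spanned by the evaluation of the constant polynomial $1$ has support all of $E$, while $n(E)=6$; likewise the span of the codeword of a double line has support of nullity $3$. As written, your argument would prove that $\Supp(D)$ is a minimal element of $N_{\dim D}$, which is false in general, and the minimality step (``contained in the $j$-dimensional space of codewords vanishing outside $\sigma$, hence equals it'') breaks down precisely when $\dim C_q(\sigma)>\dim D$.

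The repair is exactly what the paper does: discard $\dim D$ and work with $j'=n(\sigma)=\dim C_q(\sigma)$. Since $D\subseteq C_q(\sigma)$, one gets $\sigma=\Supp(D)\subseteq\Supp(C_q(\sigma))\subseteq\sigma$, so $\sigma=\Supp(C_q(\sigma))$; if some $\tau\subsetneq\sigma$ had $n(\tau)=j'$, then $C_q(\tau)\subseteq C_q(\sigma)$ with equal dimensions forces $C_q(\tau)=C_q(\sigma)$ and hence $\sigma=\Supp(C_q(\sigma))\subseteq\tau$, a contradiction; thus $\sigma$ is minimal in $N_{j'}$ (not in $N_{\dim D}$), which is all the set-theoretic statement of the lemma requires. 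A small further point: your caveat about monotonicity is misplaced, since for $\tau\subseteq\sigma$ one always has $n(\tau)\le n(\sigma)$ in a matroid (because $r(\sigma)\le r(\tau)+|\sigma\setminus\tau|$); but, as you correctly sense, no such comparison is needed anywhere once one argues through the equality $C_q(\tau)=C_q(\sigma)$.
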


%
\begin{proof}
    For any subset $\sigma \subseteq E$, consider the following subcode of $C_q$
    \[
    C_q(\sigma) = \{c \in C_q \colon \Supp(c) \subseteq \sigma\}.
    \]
    By definition of the rank function $r^*$ of the dual matroid $M^*_q$, $r^*(\sigma) = \dim C_q - \dim C_q(E \setminus \sigma)$ and thus the nullity of $\sigma$ is 
    \begin{align}\label{eq:3}
        n(\sigma) =& |\sigma| - r(\sigma) = r^*(E) - r^*(E \setminus \sigma) = \dim C_q(\sigma).
    \end{align}

Now we assume that $\sigma$ is an inclusion-minimal subset of $N_j$ for some $1 \le j \le k$.
Thus by \eqref{eq:3}, the subcode $C_q(\sigma)$ has dimension $j$. By definition, $\Supp(C_q(\sigma)) \subseteq \sigma$ and $n(\Supp(C_q(\sigma))) = j$. Therefore, if $\Supp(C_q(\sigma))$ is strictly contained in $\sigma$, then it will contradict the inclusion-minimality of $\sigma$ in $N_j$. Thus $\sigma$ is the support of the subcode $C_q(\sigma)$ of $C_q$.

Conversely, suppose 
$\sigma = \Supp(D)$ for some subcode $D$ of $C_q$. Thus $D \subseteq C_q(\sigma)$ and from \eqref{eq:3} it follows that $n(\sigma) = \dim C_q(\sigma)$. If $\sigma$ is not minimal in $N_j$ with $j = \dim C_q(\sigma)$, then $\Supp(C_q(\sigma)) \subsetneq \sigma$. Indeed, as in that case, if $\tau \subsetneq \sigma$ is an inclusion-minimal element in $N_j$ with $j = \dim C_q(\sigma)$, then $n(\tau) = \dim C_q(\tau) = \dim C_q(\sigma)$ will imply $C_q(\sigma) = C_q(\tau)$ and $\Supp(C_q(\sigma)) = \Supp(C_q(\tau)) \subseteq \tau \subsetneq \sigma$. But then $\Supp(D) \subseteq \Supp(C_q(\sigma)) \subsetneq \sigma$ will contradict with our assumption $\sigma = \Supp(D)$. Therefore, $\sigma$ is an inclusion-minimal element in $N_j$ with $j =\dim C_q(\sigma)$.
\end{proof}
\begin{remark}
  {\rm  Fix $0\le j\le n$. From the formula of $P_j(Z)$ in Step (2), we can obtain $P_j(Z)$ if we know $\phi_j^{(\ell)}$ for each $0\le \ell \le k$. Moreover, from the same formula, we can also obtain $\phi_j^{(\ell)}$ for each $\ell$ if we know  $P_j(Z)$. This is because if we write $p_j^{(\ell)}$ as the coefficient of $Z^\ell$ in $P_j(Z)$ for each $\ell$, then we have
$$
\phi_j^{(\ell)}-\phi_j^{(\ell-1)}=p_j^{(\ell)}  \quad \text{for each } 0\le \ell \le k
$$
and we can uniquely solve the equations for all $\phi_j^{(\ell)}$. Thus, for each $j$, the data of $\phi_j^{(\ell)}$ for $0\le \ell\le k$ is equivalent to the data of $P_j(Z)$. }
\end{remark}
\section{The cases $q=5,4,3,2$} \label{smallq}

\subsection{The case $q=5$}
For $q=5$ the minimal sets in $N_i$, for $1,\ldots,6$, can be characterized exactly in the same way as those for $q \ge 7$, except for $i=1$, when  the complement of an irreducible conic intersecting $L$, the line at infinity, in $2$ points, is no longer (a minimal set) in $N_1$. In fact it is a special case of (the complement of)  $4$ points, no three of which are collinear, one of the two
 different configurations that give minimal sets in $N_2$. Moreover, the argument in Lemma \ref{sigma2} for $\beta_{2,\theta}=q$ where $\theta$ is the complement of the union of a line and a point outside this line, is exactly as in the case of  $q \ge 7$. 
 
 With this in view and inserting $q=5$ in the resolution \eqref{eq:RMresolution} with the Betti number from Lemma \ref{sigma2}, we obtain a minimal free resolution of the Stanley-Reisner ring for $M_5$ (or $C_5$) of the following form:
 %



%
\begin{eqnarray*}
&& 0 \longleftarrow R_\Delta  \overset{\partial_0}{\longleftarrow} S  \overset{\partial_1}{\longleftarrow} S(-20)^{600}
\oplus S(-19)^{1000} \oplus S(-16)^{{375}
}\oplus S(-15)^{{60}} \\[.2em]
&& \overset{\partial_2}{\longleftarrow} S(-19)^{3000} \oplus S(-21)^{\beta_{2,21}} \overset{\partial_3}{\longleftarrow}   S(-20)^{\beta_{3,20}} \oplus S(-22)^{\beta_{3,22}} 
\\[.2em]
&& \overset{\partial_4} \longleftarrow S(-23)^{\beta_{4,23}}  \overset{\partial_5}{\longleftarrow} S(-24)^{\beta_{5,24}} \overset{\partial_6}{\longleftarrow} 
S(-25)^{\beta_{6,E}}{\longleftarrow} 0.  
\end{eqnarray*}


We see that there are $9$ possible 
values of $j$ for which $\phi_j \ne 0$. 
These are $15,16,19,20,21,22,23,24$ and $25$.
Compared with the case $ q \ge 7$, we see that  $q^2-(q-1)=21$ and $q^2-4=21$ now coincide (the irreducible conics intersecting $L$ in two points no longer enter the picture, but four points, among which no three are collinear, still do). So there is one less $\phi_j$ to consider.

With only $6$ unknowns (viz., $\phi_j$ for $20\le j\le 25$), the $6$ Boij-S\"oderberg equations of the form \eqref{eq:BS-phi} permit us to determine all the $\phi_j$ and therefore all the $\beta_{i,j}$. 
In particular, $\beta_{3,20}=2160$ ($20$ is the only value of $j$ for which $\phi_j$ depends on more than one $\beta_{i,j}$, as it also depends on $\beta_{1,20}=600$).

For the first elongation, all the arguments in the case of $q \ge 7$ are also valid for $q=5$, and one gets a minimal free resolution of the form:
\begin{eqnarray*}
0 \longleftarrow R_{\Delta_1}  \overset{\partial_0^{(1)}}{\longleftarrow} S \overset{\partial_1^{(1)}}{\longleftarrow} S(-19)^{600}\oplus S(-21)^{6500} \overset{\partial_2^{(1)}}{\longleftarrow} S(-20)^{\beta^{(1)}_{2,20}} \oplus S(-22)^{\beta^{(1)}_{2,22}} \\
\overset{\partial_3^{(1)}}\longleftarrow S(-23)^{\beta^{(1)}_{3,23}} \overset{\partial_4^{(1)}}{\longleftarrow} S(-24)^{\beta^{(1)}_{4,24}} \overset{\partial_5^{(1)}}{\longleftarrow} 
S(-25)^{\beta^{(1)}_{5,25}} {\longleftarrow} 0.   
\end{eqnarray*}
%
Hence  $5$ Boij-S\"oderberg equations of the form  \eqref{eq:BS-phi} in $5$ unknowns will give us all the $\phi^{(1)}_j$ and therefore, all the $\beta^{(1)}_{i,j}$.  

For the $\ell$-th elongations, for $\ell=2,\ldots,6$, one can argue exactly as in the case of $q \ge 7$,  and insert $q=5$ in the formulas found for the $\phi^{(\ell)}_j$, for all $2\le \ell \le 6$.  In this way, we obtain our desired values for these quantities  
also for $q=5$.

We remark that the values of $\N$-graded Betti numbers of $M_q^{(\ell)}$ as well as the values of $\phi^{(\ell)}_j$ are tabulated in Appendix \ref{AppA} in the case $q=5$ and also for lower values of $q$ that are discussed in the next three subsections. 

\smallskip

\noindent{\it Proof of Theorem \ref{Fallq}$(b)$.} 
As in the case $q \ge 7$,  one finds $P_j(Z)$ using the values of $\phi^{(\ell)}_j$ for each $0\le j\le n$ that we have found above, and then uses the formula
$$
P_w(q^e)=\sum_{r=0}^e A_w^{(r)}\prod_{i=0}^{r-1}(q^e-q^i) \quad \text{for } e\ge 0 \text{ and } 0\le w \le q^2,
$$ 
to find the $A^{(r)}_{w}$ for $0\le w \le q^2$ and $0\le r \le 6$. \qed

\begin{remark} \label{one}
{\rm For $q \ge 7$, we have 
$
A^{(1)}_{q^2-q+1}=\frac{q^5-q^3}{2}$, 
$A^{(2)}_{q^2-4}=\frac{q^8-4q^7+5q^6+q^5-6q^4+3q^3}{24}$, 
and 
$A^{(1)}_{q^2-4}=A^{(2)}_{q^2-q+1}=0.
$ 
Observe that ``the nonzero parts of these formulas are merged'' for $q=5$.}
\end{remark}

\subsection{The case $q=4$}
For $q=4$, the minimal sets in $N_i$, for $1,\dots,6$, can be characterized exactly in the same way as those for $q =5$, except for $i=1$, when  the complement of an irreducible conic intersecting $L$, the line at infinity, at exactly $1$ point, is no longer (a minimal set) in $N_1$. In fact, it is a special case of (the complement of)  $4$ points, no three of which are collinear, one of the two
 different configurations that give minimal sets in $N_2$. 
 It can also be mentioned that the case of irreducible conics intersecting $L$ in exactly 2 points, which gave rise to minimal sets in $N_1$, for $q \ge 7$, but not for $q=5$, now give $3$ non-collinear points in $\mathbb{A}^3$, and thus gives rise to a minimal set  in $N_3$. 
 Moreover, the argument that $\beta_{2,\theta}=q$ for $\theta$ the complement of the union of a line and a point outside this line is exactly as in the case of  $q \ge 5$. 
 
 With this in view and inserting $q=4$ in the resolution \eqref{eq:RMresolution} with $\beta_{2,\theta}$, we then obtain a minimal free resolution of the Stanley-Reisner ring associated to the matroid $M_4$ (or the code  $C_4$) of the form:
%
%
%
\begin{eqnarray*}
&& 0 \longleftarrow R_\Delta  \overset{\partial_0}{\longleftarrow} S  \overset{\partial_1}{\longleftarrow}  
 S(-11)^{288} \oplus S(-9)^{160}
\oplus S(-8)^{30} \\
&& \overset{\partial_2}{\longleftarrow} S(-11)^{960} \oplus S(-12)^{\beta_{2,12}} \overset{\partial_3}{\longleftarrow}  S(-12)^{\beta_{3,12}} \oplus S(-13)^{\beta_{3,13}} \\
&& \overset{\partial_4} \longleftarrow S(-14)^{\beta_{4,14}}\overset{\partial_5}{\longleftarrow} S(-15)^{\beta_{5,15}} \overset{\partial_6}{\longleftarrow} 
S(-16)^{\beta_{6,16}}{\longleftarrow} 0.     
\end{eqnarray*}
We see that there are $8$ possible different 
values of $j$ for which 
 $\phi_j \ne 0.$ These are precisely $8,9,11,12,13,14,15$ and $16$.
Compared with $ q \ge 7$, we find  that $q^2-q+1=13$ and $q^2-q=12$ are still there, but they 
no longer correspond to elements of $N_1$. 

With only $5$ unknowns (viz., $\phi_j$ for $j=12,13,14,15, 16$), the first $5$ of the $6$ Boij-S\"oderberg equations of the form \eqref{eq:BS-phi} permit us to determine all the $\phi_j$.
In particular, $\phi_{12}= 2520$.
Note that the support cardinality $12$ is the only value of $j$ for which $\phi_j$ depends on more than one $\beta_{i,j}$. 
Indeed, $\phi_{12} = \beta_{2,12} - \beta_{3,12} $. Thus we need to  determine one of these $\beta_{i,12}$, and it can be done in a slightly greater generality as follows. 

\begin{lemma}\label{lem:beta12}
Assume that $q\ge 3$. Let $\sigma$ be the complement of a fixed line in $E=\A^2_q$. Then $\sigma$ is a cycle in $N_3$ and the $\N^E$-graded Betti number $\beta_{3, \sigma}$ is equal to $q^3 - 2q^2 -q +2 = (q-2) (q^2-1)$. Further, there are exactly $q^2+q$ cycles in $N_3$ of type $\sigma$, and hence $\beta_{3, q^2-q} = (q^2+q)(q-2) (q^2-1) = q^5 - 3q^4-3q^3 +q^2 +2q$.     
\end{lemma}
\begin{proof}
    Let $\mu$ denote the M\"obius function of the matroid $M_q$. A cycle properly contained in $\sigma$ is either a circuit $\tau_1$ or a cycle $\tau_2$ in $N_2$. By Proposition~\ref{minwords} and parts (a), (b), (c) of Corollary~\ref{cor:smallq}, we see that $E\setminus \tau_1$ consists of a pair of lines in $\A^2_q$ that includes $E\setminus \sigma$,  whereas $E\setminus \tau_2$ consists of the line $E\setminus \sigma$ and a point outside it. Thus there are exactly $q^2+q-1$   circuits 
    and $q^2-q$ cycles of type $\tau_2$ in $\sigma$. By Proposition~\ref{mu}, $\mu(\tau_1)= -1$ and in view of  Lemma~\ref{sigma2},  Remark~\ref{rem:similarbeta2}, and Proposition~\ref{mu}, 
    $\beta_{2,\tau_2} =q = \mu(\tau_2)$. So the defining recursion for $\mu$ shows that 
    $$
    \mu(\sigma) = - \left[ (q^2-q)q +(q^2+q-1)(-1) + 1 \right] \quad 
    \text{and} \quad 
    \beta_{3,\sigma} = | \mu(\sigma)| = q^3 - 2q^2 -q +2.
    $$
    The last assertion is now an immediate consequence in view of Remark~\ref{rem:newremark}. 
\end{proof}

In the case $q=4$, Lemma~\ref{lem:beta12} yields  $\beta_{3,12}=600$ and consequently, 
$\beta_{2,12}=3120$. All other $\N$-graded Betti numbers of $M_q$ are readily determined by the $\phi_j$'s and these are tabulated in Appendix~\ref{AppA}.


For the first elongation, all arguments for general $q \ge 7$ are valid also for $q=4$, and one gets a minimal free resolution of the form:
\begin{eqnarray*}
0 \longleftarrow R_\Delta  \overset{\partial_0}{\longleftarrow} S \overset{\partial_1}{\longleftarrow} S(-11)^{240}\oplus S(-12)^{840} \overset{\partial_2}{\longleftarrow}   S(-12)^{\beta^{(1)}_{2,12}} \oplus S(-13)^{\beta^{(1)}_{2,13}}
\\[.5em]
\overset{\partial_3} \longleftarrow S(-14)^{\beta^{(1)}_{3,14}}  \overset{\partial_4}{\longleftarrow} S(-15)^{\beta^{(1)}_{4,15}} \overset{\partial_5}{\longleftarrow} 
S(-16)^{\beta^{(1)}_{5,16}} {\longleftarrow} 0. 
\end{eqnarray*}
%
%
%
Note that there are $6$ possible 
values of $j$ for which 
 $\phi_j \ne 0.$ 
These are $11,12,13,14,15$ and $16$.
Compared with $ q \ge 7$, we see that $q^2-q=12$ and $q^2-4=12$ now coincide, so there is one  less $\phi^{(1)}_j$ to consider. Hence the $\phi^{(1)}_{q^2-q}$-part and the $\phi^{(1)}_{q^2-4}$ from $q\ge 7$ will come together.
The  $5$ Boij-S\"oderberg equations of the form \eqref{eq:BS-phi} in $5$ unknowns will give us all the $\phi^{(1)}_j$ and  therefore, all the $\beta^{(1)}_{i,j}$. 
Note that 
$\beta^{(1)}_{2,12}=220$ and the support cardinality $12$ is the only value of $j$ for which $\phi^{(1)}_j$ depends on more than one $\beta^{(1)}_{i,j}$ as it also depends on  $\beta^{(1)}_{1,12}= 840$.

For the $\ell$-th elongations, for $\ell=2,\dots,6$, one can argue exactly as for $q \ge 7$, and inserting $q=4$ in the formulas found for the $\phi^{(\ell)}_j$, for all $\ell \ge 2$, we get our desired values for these variables also for $q=4$. Now we can proceed to give a proof of our main result for the case $q=4$.

\noindent {\it Proof of Theorem \ref{Fallq}$(c)$.} We find $P_j(Z)$ from the $\phi^{(\ell)}_j$ for each $0\le j\le n$ that we have found above, similar to the case $q \ge 7$ and use the formula, also similar to the case $q \ge 7$, 
$$
P_w(q^e)=\sum_{r=0}^e A_w^{(r)}\prod_{i=0}^{r-1}(q^e-q^i)\quad \text{for }
e\ge 0 \text{ and } 0\le w \le q^2,
$$ 
to determine all the $A^{(r)}_{w}.$\qed

\subsection{The case $q=3$}

For $q=3$, the minimal sets in $N_1$ are  the complements of 
the unions of two lines, and we still have the two different cases where the lines are parallel, and where they meet. Compared with the case $q=4$, the $q+1$ points on an irreducible conic are now the complement of a minimal set in $N_2$ (complement of four points, among which no three are collinear), and no longer in  $N_1$.

The  minimal sets of $N_2$ are still the complements of four points, among which no three are collinear, and the complement of the union of a line and a point outside the line. But now the cardinality of these two sets are the same, namely $q^2-(q+1)=q^2-4=5$. 

The  minimal sets of $N_3$ are still the complements of $q$ points on a line, and the complement of the union of three non-collinear points. But now the cardinality of these two sets are the same, namely $q^2-q=q^2-3=6$. 

This implies that apart from the nullity $1$-part of the minimal free resolution of the relevant Stanley-Reisner ring, which we know, the resolution is pure.
It is of the following form:
\begin{eqnarray*}
&&  
0 \longleftarrow R_\Delta  \overset{\partial_0}{\longleftarrow} S  \overset{\partial_1}{\longleftarrow}  S(-4)^{54} \oplus S(-3)^{12}\overset{\partial_2}{\longleftarrow}  \oplus S(-5)^{\beta_{2,5}}  
\\
&& \overset{\partial_3}{\longleftarrow}   S(-6)^{\beta_{3,6}} \longleftarrow S(-7)^{\beta_{4,8}} \overset{\partial_5}{\longleftarrow} S(-8)^{\beta_{5,8}} \overset{\partial_6}{\longleftarrow} 
S(-9)^{\beta_{6,9}}{\longleftarrow} 0. 
\end{eqnarray*}
So, we get $6$ Boij-S\"oderberg equations of the form \eqref{eq:BS-phi} to determine $5$ unknowns. Solving these, we readily obtain all the $\phi_j$ and therefore, all the $\beta_{i,j}$.

For the $\ell$-th elongations, for $\ell=1,\ldots,6$, we get  pure and linear resolutions, where the Betti numbers and $\phi^{(\ell)}$ can be found easily. 

As an end result, one obtains:

\noindent{\it Proof of Theorem \ref{Fallq}$(d)$.} One finds $P_j(Z)$ using the values of $\phi^{(\ell)}_j$ for each $1\le j\le n$ that we have found above as in the case $q \ge 7$ and use the formula, also as in the case $q \ge 7$, 
$$P_w(q^e)=\sum_{r=0}^e A_w^{(r)}\prod_{i=0}^{r-1}(q^e-q^i)\quad \text{ for } 
e\ge 0 \text{ and } 0\le w \le q^2,
$$ to find all the $A^{(r)}_{w}.$\qed

\begin{remark} \label{four}
{\rm It is interesting to compare our results on $A^{(r)}_w$ in the cases $r=1,2$ with those of \cite[Proposition~2.12]{KM} and the last part in \cite[Lemma~4.3]{KM}. In \cite[Proposition~2.12]{KM}, Kaplan and Matei give a formula for what would be 
$$
X^{q^2} +(q-1)\sum_w A^{(1)}_w X^{q^2-w}Y^w
$$ 
in our language. In the last part of  \cite[Lemma~4.3]{KM}, they give a formula which is equivalent to finding $\sum_w A^{(2)}_w X^{q^2-w}Y^w.$  The interesting observation is that they give unified formulas for all $q \ge 3$ in \cite{KM}, while we have had to treat free resolutions that are different for the cases $q \ge 7$, and three other cases $q=5,4,3$. 
On the other hand, as observed in Corollary~\ref{cor:UnifiedWtEnum} and the preceding paragraph, we can also give unified formulas for the higher weight polynomials
 $$
 W_r(X,Y)=\sum_w A^{(r)}_w X^{q^2-w}Y^w 
 $$
 for all $q\ge 3$ and $1\le r\le 6$, and these are compatible with those of Kaplan and Matei \cite{KM} when $r=2$. 
}
\end{remark}

\subsection{The case $q=2$.}
Here the code is a subcode of $\F_2^4.$
We evaluate quadratic polynomials $xy, \, x(y-1), \, (x-1)y, \, (x-1)(y-1)$ at $(0,0),\, (0,1),\, (1,0),\, (1,1)$, respectively and obtain the codewords
$$
(0,0,0,1),\, (0,0,1,0), \, (0,1,0,0), \, (1,0,0,0),
$$ 
which constitute a basis for $\F_2^4.$ Hence the evaluation map is surjective, and $C_2=\F_2^4.$ 
Thus, by direct inspection, we obtain 
$A^{(0)}_0=1,\, A^{(1)}_1=4, \, A^{(1)}_2=6, \, A^{(1)}_3=4$,  $A^{(1)}_4=1$, $A^{(2)}_2=6, \, A^{(2)}_3=16, \, A^{(2)}_4=13, \, A^{(3)}_3=4, \, A^{(3)}_4=11, \, A^{(4)}_4=1,$
and all other $A^{(r)}_w$ are zero. 
This proves 
Theorem~\ref{Fallq}$(e)$, and thus the proof of Theorem~\ref{Fallq} is now complete. 

As for the determination of Betti numbers, we note that 
 $C_2=\F_2^4$ is clearly an MDS code. 
Thus one can directly use the formulas in \cite[Example 5.2]{JRV} for the Betti numbers of 
$M_2^{(\ell)}$ for $0\le \ell\le 4$. 
Note, however, that the  formulas in \cite{JRV} refer to the Betti numbers of the Stanley-Reisner ideal $I$, and a small modification is needed to adapt them to our case, where we consider the Betti numbers of the Stanley-Reisner ring $S/I$, and this yields 
\begin{equation*}
\beta_{0,0}^{(\ell)}=1  \quad \text{and} \quad \beta_{i,j}^{(\ell)}=\begin{cases}
        \binom{j-1}{s+\ell}\binom{n}{j} & \text{if } i=j-\ell-s, \\
        0 & \text{otherwise}
    \end{cases}
     \quad \text{for } i,j\ge 1,
\end{equation*}
where $s = n-k$, which in this particular case is $0$. 
Thus, the Betti numbers of $M_2^{(\ell)}$ for $0\le \ell\le 4$ are given by the formulas
\begin{equation*}
 \beta_{0,0}^{(\ell)}=1  \quad \text{and} \quad    \beta_{i,j}^{(\ell)}=\begin{cases}
        \binom{j-1}{\ell}\binom{4}{j} & \text{if } i=j-\ell\\
        0 & \text{otherwise}
    \end{cases}
     \quad \text{for } i,j\ge 1.
\end{equation*}

\begin{remark} \label{anothermetod}
{\rm
As noted earlier, our determination of higher weight spectra of the Reed-Muller code $\RM_q(2,2)$ uses a suitable variant of the method used in \cite{JV20} and \cite{JV21} for projective Reed-Muller codes $\PRM_q(2,2)$ and $\PRM_2(2,3)$. Kaipa and Pradhan \cite{KP} used a somewhat different method to deal with the case of $\PRM_3(2,3)$. Here we give a brief comparison of the two methods for the convenience of the reader. 

The rank function of the generator matroid of a code can be obtained by determining the nullities of the subsets for the parity check matroid.
In \cite[Theorem~2.25]{JP}, it is proved that generalized weight enumerator polynomial of a linear code can be obtained from the extended weight enumerator, which is completely determined if we know the number $B_{j,i}$ of subsets of size $j$ and rank $i$ of the generator matroid $M_{G}$ of the code. Following this method in \cite{KP}, the higher weight spectra of projective Reed-Muller code $\textrm{PRM}_3(2,3)$ are determined.

Note that in \cite{KP}, the authors determine the rank $r(J)$ of subsets of their ground set by first determining the flats of the generator matroids. Though the terminology of flats is not used, it follows from the definition of maximal configuration of a fixed rank in \cite[Definition 3.2]{KP} that such configurations basically are the flats of the generator matroid. 
On the other hand, we in the present paper determine the lattice of cycles of the parity check matroid, a lattice which is opposite to the lattice of flats of the generator matroid. The upshot is that using two different methods, one can determine the flats of the generator matroid which further can be used in determining the higher weight spectra and more finer invariants, i.e., the Betti numbers of the code. Both the methods rely on understanding the geometry of the relevant Veronese embeddings. 
}
\end{remark}

\section*{Acknowledgement}

We thank Hugues Verdure for providing the material for Example \ref{bettiremark}. The second named author thanks the Department of Mathematics at IIT Bombay for its hospitality during a visit in August--November 2023 when much of the work in this paper was done. 

\addresseshere

\appendix
\section{Calculations of Betti numbers and the \texorpdfstring{$\phi^{(\ell)}_j$}{phi^l_j} for \texorpdfstring{$q=2,3,4,5$}{q=2,3,4,5}}\label{AppA}
We tabulate below the values of $\phi_j^{(\ell)}$ and the Betti numbers 
$\beta_{i,i+j}^{(\ell)}$
for $q=5,4,3,2$ explicitly, the procedure for which is described in Section~\ref{smallq}. 

\begin{table}[h]
  \makebox[\textwidth]{%
    \begin{tabular}{|p{1.4cm}|p{1.4cm}|p{1.4cm}|p{1.4cm}|p{1.4cm}|p{1.4cm}|p{1.4cm}|}
      \hline
      \diagbox[width=\dimexpr \textwidth/14+5\tabcolsep\relax, height=0.9cm]{$j$ }{$\ell$}
                   &0 &$1$ & $2$ & $3$& $4$& $5$ \\
      \hline
       15 & - 60  & 0 & 0 &0 & 0 & 0 \\
      \hline
      16 & -375 & 0 & 0 &0 &  0& 0\\
      \hline
      19 & 2000 &  -600& 0 &0 & 0 & 0\\
      \hline
      20 & - 2760   & 570  & -30 & 0&0 & 0 \\
      \hline
      21 & 31000 & -6500 & 0 & 0 & 0  &  0\\
      \hline
      22 & -100000  & 30000  & -2000  &0 &  0 & 0 \\
      \hline
      23 & 127500  & -48000 & 6000 & -300 & 0 & 0 \\
      \hline
      24 & -73250  &  32725 &  -5875 & 575 & -25 & 0\\
      \hline
      25 & 15944  & -8196 & 1904 & -276 &24  &  -1 \\
      \hline
     
    \end{tabular}
  }%
\vspace{0.3cm}
    \caption{$\phi_{j}^{(\ell)}$ for $q = 5$}
\end{table}

\begin{table}[h]
  \makebox[\textwidth]{%
    \begin{tabular}{|p{1.2cm}|p{1.2cm}|p{1.2cm}|p{1.2cm}|p{1.2cm}|p{1.2cm}|p{1.2cm}|p{1.2cm}|}
      \hline
      \diagbox[width=\dimexpr \textwidth/18+5\tabcolsep\relax, height=0.9cm]{$j$ }{$i$}
         &0  & 1  & 2   &3 &4 &5 & 6   \\
      \hline
      0  &1  &  0  &  0  & 0  & 0  & 0 & 0  \\
      \hline
      14 &  0 & 60 &  0   &  0  &  0  & 0 & 0  \\
      \hline
      15 & 0  &375 &  0   & 0   &  0  & 0  & 0\\
      \hline
      17 &  0 & 0   &3000 &  2160  & 0   &  0   & 0 \\
      \hline
      18 &  0 &1000&  0   & 0   &  0  & 0&0\\
      \hline
      19 & 0  &600 &31000& 100000   &  127500   &  73250 & 15944\\
      \hline
    \end{tabular}
  }%
  \vspace{0.3cm}
    \caption{$\beta_{i,i+j}^{(0)}$ for $q = 5$}
\end{table}

\begin{table}[h]
  \makebox[\textwidth]{
    \begin{tabular}{|p{1cm}|p{1cm}|p{1cm}|p{1cm}|p{1cm}|p{1cm}|p{1cm}|}
      \hline
      \diagbox[width=\dimexpr \textwidth/26+5\tabcolsep\relax, height=0.9cm]{$j$ }{$i$}
         & 0 & 1 & 2  & 3  & 4& 5   \\
      \hline
      0  & 1 & 0 & 0  &  0 & 0 & 0     \\
      \hline
      18 & 0 & 600 & 570 & 0  &  0 & 0     \\
      \hline
      20 & 0 & 6500 & 30000 & 48000 & 32725  & 8196  \\
      \hline
    \end{tabular}
  }
 \vspace{0.3cm}
    \caption{$\beta_{i,i+j}^{(1)}$ for $q = 5$}
\end{table}

\begin{table}[h]
  \makebox[\textwidth]{
    \begin{tabular}{|p{1cm}|p{1cm}|p{1cm}|p{1cm}|p{1cm}|p{1cm}|}
      \hline
      \diagbox[width=\dimexpr \textwidth/26+5\tabcolsep\relax, height=0.9cm]{$j$ }{$i$}
         & 0 & 1 & 2  & 3  & 4   \\
      \hline
      0  & 1 & 0 & 0  &  0 & 0      \\
      \hline
      19 & 0 & 30 & 0 & 0  &  0     \\
      \hline
      21 & 0 & 2000 & 6000 & 5875 & 1904   \\
      \hline
    \end{tabular}
  }
  \vspace{0.3cm}
    \caption{$\beta_{i,i+j}^{(2)}$ for $q = 5$}
\end{table}

\begin{table}[h]
  \makebox[\textwidth]{
    \begin{tabular}{|p{1cm}|p{1cm}|p{1cm}|p{1cm}|p{1cm}|}
      \hline
      \diagbox[width=\dimexpr \textwidth/26+5\tabcolsep\relax, height=0.9cm]{$j$ }{$i$}
         & 0 & 1 & 2  & 3    \\
      \hline
      0  & 1 & 0 & 0  &  0      \\
      \hline
      22 & 0 & 300 & 575 & 276     \\
      \hline
    \end{tabular}
  }
  \vspace{0.3cm}
    \caption{$\beta_{i,i+j}^{(3)}$ for $q = 5$}
\end{table}

\begin{table}[h]
  \makebox[\textwidth]{
    \begin{tabular}{|p{1cm}|p{1cm}|p{1cm}|p{1cm}|}
      \hline
      \diagbox[width=\dimexpr \textwidth/26+5\tabcolsep\relax, height=0.9cm]{$j$ }{$i$}
         & 0 & 1 & 2      \\
      \hline
      0  & 1 & 0 & 0       \\
      \hline
      23 & 0 & 25 & 24      \\
      \hline
    \end{tabular}
  }
 \vspace{0.3cm}
    \caption{$\beta_{i,i+j}^{(4)}$ for $q = 5$}
\end{table}

\begin{table}[h]
  \makebox[\textwidth]{
    \begin{tabular}{|p{1cm}|p{1cm}|p{1cm}|p{1cm}|}
      \hline
      \diagbox[width=\dimexpr \textwidth/26+5\tabcolsep\relax, height=0.9cm]{$j$ }{$i$}
         & 0 & 1      \\
      \hline
      0  & 1 & 0       \\
      \hline
      24 & 0 & 1       \\
      \hline
    \end{tabular}
  }
  \vspace{0.3cm}
    \caption{$\beta_{i,i+j}^{(5)}$ for $q = 5$}
\end{table}


\begin{table}[h]
    \centering
    \begin{tabular}{|p{1.3cm}|p{1.3cm}|p{1.3cm}|p{1.3cm}|p{1.3cm}|p{1.3cm}|p{1.3cm}|}
    \hline
    \diagbox[width=\dimexpr \textwidth/16+5\tabcolsep\relax, height=0.9cm]{$j$ }{$\ell$}
    & 0 & 1 & 2 & 3 & 4 & 5\\
    \hline
    8 & -30 & 0 & 0 & 0 & 0 & 0\\
    \hline
    9 & -160 & 0 & 0 & 0 & 0 & 0\\
    \hline
    11 & 672 & -240 & 0 & 0 & 0 & 0\\
    \hline
    12 & 2520 & -620 & -20 & 0 & 0 & 0\\
    \hline
    13 & -10080 & 4320 & -480 & 0 & 0 & 0\\
    \hline
    14 & 12480 & -6960 & 1440 & -120 & 0 & 0\\
    \hline
    15 & -6816 & 4624 & -1376 & 224 & -16 & 0\\
    \hline
    16 & 1413 & -1125 & 435 & -105 & 15 & -1\\
    \hline
    \end{tabular}
\vspace{0.3cm}
    \caption{$\phi^{(\ell)}_{j}$ for $q=4$}
\end{table}

\begin{table}[h]
    \centering
    \begin{tabular}{|p{1cm}|p{1cm}|p{1cm}|p{1cm}|p{1cm}|p{1cm}|p{1cm}|p{1cm}|}
    \hline
    \diagbox[width=\dimexpr \textwidth/26+5\tabcolsep\relax, height=0.9cm]{$j$ }{$i$}
    & 0 & 1 & 2 & 3 & 4 & 5 & 6\\
    \hline
    0 & 1 & 0 & 0 & 0 & 0 & 0 & 0\\
     \hline
    7 & 0 & 30 & 0 & 0 & 0 & 0 & 0\\
    \hline
    8 & 0 & 160 & 0 & 0 & 0 & 0 & 0\\
    \hline
    9 & 0 & 0 & 960 & 600 & 0 & 0 & 0\\
    \hline
    10 & 0 & 288 & 3120 & 10080 & 12480 & 6816 & 1413\\
    \hline
    \end{tabular}
\vspace{0.3cm}
    \caption{$\beta^{(0)}_{i,i+j}$ for $q=4$}
\end{table}

\begin{table}[h]
    \centering
    \begin{tabular}{|p{1cm}|p{1cm}|p{1cm}|p{1cm}|p{1cm}|p{1cm}|p{1cm}|p{1cm}|}
    \hline
    \diagbox[width=\dimexpr \textwidth/26+5\tabcolsep\relax, height=0.9cm]{$j$ }{$i$}
    & 0 & 1 & 2 & 3 & 4 & 5 \\
    \hline
    0 & 1 & 0 & 0 & 0 & 0 & 0 \\
     \hline
    10 & 0 & 240 & 220 & 0 & 0 & 0 \\
    \hline
    11 & 0 & 840 & 4320 & 6960 & 4624 & 1125  \\
    \hline
    \end{tabular}
\vspace{0.3cm}
    \caption{$\beta^{(1)}_{i,i+j}$ for $q=4$}
\end{table}

\begin{table}[h]
    \centering
    \begin{tabular}{|p{1cm}|p{1cm}|p{1cm}|p{1cm}|p{1cm}|p{1cm}|p{1cm}|p{1cm}|}
    \hline
    \diagbox[width=\dimexpr \textwidth/26+5\tabcolsep\relax, height=0.9cm]{$j$ }{$i$}
    & 0 & 1 & 2 & 3 & 4  \\
    \hline
    0 & 1 & 0 & 0 & 0 & 0  \\
     \hline
    11 & 0 & 20 & 0 & 0 & 0  \\
    \hline
    12 & 0 & 480 & 1440 & 1376 & 435   \\ 
    \hline
    \end{tabular}
\vspace{0.3cm}
    \caption{$\beta^{(2)}_{i,i+j}$ for $q=4$}
\end{table}


\begin{table}[h]
    \centering
    \begin{tabular}{|p{1cm}|p{1cm}|p{1cm}|p{1cm}|p{1cm}|p{1cm}|p{1cm}|p{1cm}|}
    \hline
    \diagbox[width=\dimexpr \textwidth/26+5\tabcolsep\relax, height=0.9cm]{$j$ }{$i$}
    & 0 & 1 & 2 & 3   \\
    \hline
    0 & 1 & 0 & 0 & 0   \\
     \hline
    13 & 0 & 120 & 224 & 105    \\ 
    \hline
    \end{tabular}
\vspace{0.3cm}
    \caption{$\beta^{(3)}_{i,i+j}$ for $q=4$}
\end{table}

\begin{table}[h]
    \centering
    \begin{tabular}{|p{1cm}|p{1cm}|p{1cm}|p{1cm}|p{1cm}|p{1cm}|p{1cm}|p{1cm}|}
    \hline
    \diagbox[width=\dimexpr \textwidth/26+5\tabcolsep\relax, height=0.9cm]{$j$ }{$i$}
    & 0 & 1 & 2    \\
    \hline
    0 & 1 & 0 & 0    \\
     \hline
    14 & 0 & 16 & 15   \\ 
    \hline
    \end{tabular}
\vspace{0.3cm}
    \caption{$\beta^{(4)}_{i,i+j}$ for $q=4$}
\end{table}

\begin{table}[h]
    \centering
    \begin{tabular}{|p{1cm}|p{1cm}|p{1cm}|p{1cm}|p{1cm}|p{1cm}|p{1cm}|p{1cm}|}
    \hline
    \diagbox[width=\dimexpr \textwidth/26+5\tabcolsep\relax, height=0.9cm]{$j$ }{$i$}
    & 0 & 1    \\
    \hline
    0 & 1 & 0    \\
     \hline
    15 & 0 & 1   \\ 
    \hline
    \end{tabular}
\vspace{0.3cm}
    \caption{$\beta^{(5)}_{i,i+j}$ for $q=4$}
\end{table}


\begin{table}[h]
    \centering
    \begin{tabular}{|p{1.3cm}|p{1.3cm}|p{1.3cm}|p{1.3cm}|p{1.3cm}|p{1.3cm}|p{1.3cm}|}
    \hline
    \diagbox[width=\dimexpr \textwidth/16+5\tabcolsep\relax, height=0.9cm]{$j$ }{$\ell$}
    & 0 & 1 & 2 & 3 & 4 & 5\\
    \hline
    3 & -12 & 0 & 0 & 0 & 0 & 0\\
    \hline
    4 & -54 & 0 & 0 & 0 & 0 & 0\\
    \hline
    5 & 324 & -126 & 0 & 0 & 0 & 0\\
    \hline
    6 & -600 & 420 & -84 & 0 & 0 & 0\\
    \hline
    7 & 540 & -540 & 216 & -36 & 0 & 0\\
    \hline
    8 & -243 & 315 & -189 & 63 & -9 & 0\\
    \hline
    9 & 44 & -70 & 56 & -28 & 8 & -1\\
    \hline
    \end{tabular}
\vspace{0.3cm}
    \caption{$\phi^{(\ell)}_{j}$ for $q=3$}
\end{table}

\begin{table}[h]
    \centering
    \begin{tabular}{|p{1cm}|p{1cm}|p{1cm}|p{1cm}|p{1cm}|p{1cm}|p{1cm}|p{1cm}|}
    \hline
    \diagbox[width=\dimexpr \textwidth/26+5\tabcolsep\relax, height=0.9cm]{$j$ }{$i$}
    & 0 & 1 & 2 & 3 & 4 & 5 & 6\\
    \hline
    0 & 1 & 0 & 0 & 0 & 0 & 0 & 0\\
    \hline
    2 & 0 & 12 & 0 & 0 & 0 & 0 & 0\\
    \hline
    3 & 0 & 54 & 324 & 600 & 540 & 243 & 44\\
    \hline
    \end{tabular}
\vspace{0.3cm}
    \caption{$\beta^{(0)}_{i,i+j}$ for $q=3$}
\end{table}

\begin{table}[h]
    \centering
    \begin{tabular}{|p{1cm}|p{1cm}|p{1cm}|p{1cm}|p{1cm}|p{1cm}|p{1cm}|}
    \hline
    \diagbox[width=\dimexpr \textwidth/26+5\tabcolsep\relax, height=0.9cm]{$j$ }{$i$}
    & 0 & 1 & 2 & 3 & 4 & 5\\
    \hline
     0 & 1 & 0 & 0 & 0 & 0 & 0\\
    \hline
     4 & 0 & 126 & 420 & 540 & 315 & 70   \\
    \hline
    \end{tabular}
\vspace{0.3cm}
    \caption{$\beta^{(1)}_{i,i+j}$ for $q=3$}
\end{table}

\begin{table}[h]
    \centering
    \begin{tabular}{|p{1cm}|p{1cm}|p{1cm}|p{1cm}|p{1cm}|p{1cm}|}
    \hline
    \diagbox[width=\dimexpr \textwidth/26+5\tabcolsep\relax, height=0.9cm]{$j$ }{$i$}
    & 0 & 1 & 2 & 3 & 4 \\
    \hline
     0 & 1 & 0 & 0 & 0 & 0  \\
    \hline
     5 & 0 & 84 & 216 & 189 & 56  \\
    \hline
    \end{tabular}
\vspace{0.3cm}
    \caption{$\beta^{(2)}_{i,i+j}$ for $q=3$}
\end{table}

\begin{table}[h]
    \centering
    \begin{tabular}{|p{1cm}|p{1cm}|p{1cm}|p{1cm}|p{1cm}|p{1cm}|p{1cm}|p{1cm}|}
    \hline
    \diagbox[width=\dimexpr \textwidth/26+5\tabcolsep\relax, height=0.9cm]{$j$ }{$i$}
    & 0 & 1 & 2 & 3  \\
    \hline
     0 & 1 & 0 & 0 & 0  \\
    \hline
     6 & 0 & 36 & 63 & 28  \\
    \hline
    \end{tabular}
\vspace{0.3cm}
    \caption{$\beta^{(3)}_{i,i+j}$ for $q=3$}
\end{table}

\begin{table}[h]
    \centering
    \begin{tabular}{|p{1cm}|p{1cm}|p{1cm}|p{1cm}|}
    \hline
    \diagbox[width=\dimexpr \textwidth/26+5\tabcolsep\relax, height=0.9cm]{$j$ }{$i$}
    & 0 & 1 & 2   \\
    \hline
     0 & 1 & 0 & 0   \\
    \hline
     7 & 0 & 9 & 8   \\
    \hline
    \end{tabular}
    \vspace{0.3cm}
    \caption{$\beta^{(4)}_{i,i+j}$ for $q=3$}
\end{table}

\begin{table}[h]
    \centering
    \begin{tabular}{|p{1cm}|p{1cm}|p{1cm}|}
    \hline
    \diagbox[width=\dimexpr \textwidth/26+5\tabcolsep\relax, height=0.9cm]{$j$ }{$i$}
    & 0 & 1   \\
    \hline
     0 & 1 &  0   \\
    \hline
     8 & 0 & 1   \\
    \hline
    \end{tabular}
\vspace{0.3cm}
    \caption{$\beta^{(5)}_{i,i+j}$ for $q=3$}
\end{table}



\begin{table}[h]
    \centering
    \begin{tabular}{|p{1cm}|p{1cm}|p{1cm}|p{1cm}|p{1cm}|p{1cm}|}
    \hline
    \diagbox[width=\dimexpr \textwidth/26+5\tabcolsep\relax, height=0.9cm]{$j$ }{$i$}
    & 0 & 1 & 2 & 3 & 4 \\
    \hline
    0 & 1 & 4 & 6 & 4 & 1 \\
    \hline
    \end{tabular}
\vspace{0.3cm}
    \caption{$\beta^{(0)}_{i,i+j}$ for $q=2$}
\end{table}

\begin{table}[h]
    \centering
    \begin{tabular}{|p{1cm}|p{1cm}|p{1cm}|p{1cm}|p{1cm}|}
    \hline
    \diagbox[width=\dimexpr \textwidth/26+5\tabcolsep\relax, height=0.9cm]{$j$ }{$i$}
    & 0 & 1 & 2 & 3 \\
    \hline
    0 & 1 & 0 & 0 & 0 \\
    \hline
    1 & 0 & 6 & 8 & 3 \\
    \hline
    \end{tabular}
\vspace{0.3cm}
    \caption{$\beta^{(1)}_{i,i+j}$ for $q=2$}
\end{table}

\begin{table}[h]
    \centering
    \begin{tabular}{|p{1cm}|p{1cm}|p{1cm}|p{1cm}|}
    \hline
    \diagbox[width=\dimexpr \textwidth/26+5\tabcolsep\relax, height=0.9cm]{$j$ }{$i$}
    & 0 & 1 & 2  \\
    \hline
    0 & 1 & 0 & 0  \\
    \hline
    2 & 0 & 4 & 3  \\
    \hline
    \end{tabular}
\vspace{0.3cm}
    \caption{$\beta^{(2)}_{i,i+j}$ for $q=2$}
\end{table}

\begin{table}[h]
    \centering
    \begin{tabular}{|p{1cm}|p{1cm}|p{1cm}|}
    \hline
    \diagbox[width=\dimexpr \textwidth/26+5\tabcolsep\relax, height=0.9cm]{$j$ }{$i$}
    & 0 & 1   \\
    \hline
    0 & 1 & 0   \\
    \hline
    3 & 0 & 1  \\
    \hline
    \end{tabular}
\vspace{0.3cm}
    \caption{$\beta^{(3)}_{i,i+j}$ for $q=2$}
\end{table}


\end{document}